\newcommand{\Huno}{{\calH}^{1}}      
\newcommand{\Rdue}{\R^2}
\newcommand{\parametermu}{\mu}
\newcommand{\C}{\mathbb{C}}
\newcommand\res{\mathop{\hbox{\vrule height 7pt width .5pt depth 0pt
\vrule height .5pt width 6pt depth 0pt}}\nolimits}
\newcommand{\Ln}{{\calL}^2}
\newcommand\eps{\varepsilon}
\newcommand\loc{\mathrm{loc}}
\newcommand\Tr{\mathrm{Tr\,}}
\newcommand\dist{\mathrm{dist}}
\newcommand\R{\mathbb{R}}
\newcommand\N{\mathbb{N}}
\newcommand\calH{\mathcal{H}}
\newcommand\calF{\mathcal{F}}
\newcommand\calA{\mathcal{A}}
\newcommand\calL{\mathcal{L}}
\newcommand{\LM}[1]{\hbox{\vrule width.2pt \vbox to#1pt{\vfill \hrule width#1pt height.2pt}}}
\newcommand{\LL}{{\mathchoice{\,\LM7\,}{\,\LM7\,}{\,\LM5\,}{\,\LM{3.35}\,}}}
\newcommand{\ba}[1]{\begin{eqnarray} #1 \end{eqnarray}}
\newcommand{\be}[1]{\begin{equation} #1 \end{equation}}
\newcommand{\fz}{f_0}
\newtheorem{theorem}{Theorem}[section]
\newtheorem{definition}[theorem]{Definition}
\newtheorem{proposition}[theorem]{Proposition}
\newtheorem{lemma}[theorem]{Lemma}
\newtheorem{remark}[theorem]{Remark}
\newtheorem{corollary}[theorem]{Corollary}
\numberwithin{equation}{section}
\newcounter{Nummer}
\newcommand{\Rn}{{\R}^n}
\newcommand{\cost}{\kappa}
\begin{document}
\begin{center}
  {\Large
{Existence of strong minimizers} for the Griffith static fracture model 
in dimension two}\\[5mm]
{\today}\\[5mm]
Sergio Conti$^{1}$, Matteo Focardi$^{2}$, and Flaviana Iurlano$^{3}$\\[2mm]
{\em $^{1}$
 Institut f\"ur Angewandte Mathematik,
Universit\"at Bonn\\ 53115 Bonn, Germany}\\[1mm]
{\em $^{2}$ DiMaI, Universit\`a di Firenze\\ 50134 Firenze, Italy}\\[1mm]
{\em $^{3}$ Laboratoire Jacques-Louis Lions, Université Paris 6\\ 75005 Paris, France}\\[3mm]
    \begin{minipage}[c]{0.8\textwidth}
 We consider the Griffith fracture model in two spatial dimensions, and prove
    existence of strong minimizers, with closed jump set  and  continuously
    differentiable deformation fields. One key ingredient, {which is the object of the present paper}, is 
    a generalization of the decay estimate by De Giorgi, Carriero, and Leaci to the
    vectorial situation. {This is} based on replacing the coarea formula by a method to approximate
 $SBD^p$ functions with small jump set by Sobolev functions {and is} restricted to two dimensions. {The other two ingredients are contained in companion papers and consist respectively in regularity results for vectorial elliptic 
 problems of the elasticity type and in a method to approximate in energy $GSBD^p$ functions by $SBV^p$ ones.} 
    \end{minipage}
\end{center}

\section{Introduction}
The study of brittle fracture in solids is based on the Griffith model, which combines
elasticity with a term proportional to the surface opened by the fracture. In its variational formulation one minimizes
\begin{equation}\label{eqgriffintro}
 E[\Gamma,u]:=\int_{\Omega\setminus\Gamma} 
 \Big(\frac12 \C e(u)\cdot e(u)+h(x,u)\Big) dx + 2\beta\calH^{n-1}({\Gamma\cap\Omega})
\end{equation}
over all closed 
sets $\Gamma\subset{\overline\Omega}$ and all deformations $u\in C^1(\Omega\setminus\Gamma,{\R^n})$ subject to suitable
boundary and irreversibility conditions. Here $\Omega\subset\R^n$ is the reference configuration, the function $h\in C^0(\Omega\times \R^n)$ represents external volume forces,
$e(u)=(\nabla u+\nabla u^T)/2$ is the elastic strain,
$\C\in \R^{(n\times n)\times (n\times n)}$ is the matrix of elastic coefficients, $\beta>0$ the surface energy.
The evolutionary problem of fracture can be modeled as a sequence of variational problems, in which
one minimizes \eqref{eqgriffintro} subject to varying loads with a kinematic restriction representing
the irreversibility of fracture, see \cite{FrancfortMarigo1998,BourdinFrancfortMarigo2008,dm-toa}.

Mathematically, \eqref{eqgriffintro} is a vectorial free discontinuity problem. Much better known is its scalar 
version, mechanically corresponding to the anti-plane case, in which one
replaces the elastic energy by the Dirichlet integral,
\begin{equation}\label{eqgMSintro}
 E_\mathrm{MS}[\Gamma,u]:=\int_{\Omega\setminus\Gamma} \Big(\frac12 |Du|^2+{h(x,u)}\Big) dx + 2\beta\calH^{n-1}(\Gamma{\cap\Omega}) \,,
\end{equation}
and one minimizes over all maps $u:\Omega\setminus\Gamma\to\R$. This scalar reduction coincides with the Mumford-Shah 
functional of image segmentation, and has been widely studied analytically and numerically
\cite{AmbrosioFuscoPallara,David2005,BourdinFrancfortMarigo2008}. The relaxation of 
\eqref{eqgMSintro} leads naturally to the space of special functions of bounded variation, and is given by
\begin{equation}\label{eq:grifsbv}
 E_\mathrm{MS}^*[u]:=\int_{\Omega} \Big(\frac12 |\nabla u|^2 +{h(x,u)}\Big)dx + 
 2\beta \calH^{n-1}(J_u{\cap\Omega}) \,.
\end{equation}
Here $u$ belongs to the space $SBV^2(\Omega)$, which is the set of  functions such that the distributional gradient $Du$ is a bounded measure and can be written as 
$Du=\nabla u \calL^n + [u]{\nu_u} \calH^{n-1}\LL J_u$ with $\nabla u\in L^2(\Omega;\R^n)$, $[u]$ the jump of $u$, 
$J_u$ the {$(n-1)$}-rectifiable jump set of $u$, which obeys $\calH^{n-1}(J_u)<\infty$, and ${\nu_u}$ its normal. Existence of minimizers for the relaxed
problem $E_\mathrm{MS}^*$ follows then from the general compactness properties of $SBV^2$, see \cite{AmbrosioFuscoPallara} 
and references therein.

The breakthrough in the quest for an existence theory for
 the Mumford-Shah functional \eqref{eqgMSintro}
came with the proof by De Giorgi, Carriero, and Leaci in 1989 \cite{DegiorgiCarrieroLeaci1989}
that the jump set of minimizers is essentially closed, in the sense that
 minimizers of the relaxed functional $ E_\mathrm{MS}^*$ obey
\begin{equation}\label{eqjuclosedintro}
\calH^{n-1}(\Omega\cap J_u)=\calH^{n-1}({\Omega\cap\overline {J_u}}).  
\end{equation}
This permits to define $\Gamma$ as the closure
of $J_u$, and then to use regularity of local minimizers of the Dirichlet integral on the open set 
$\Omega\setminus\Gamma$ to prove smoothness of $u$. 
{The essential closedness of the jump set stated in \eqref{eqjuclosedintro} is a property 
satisfied by several variants of the energy in \eqref{eq:grifsbv}, in particular also by some 
defined on vector-valued $SBV^2(\Omega,\R^N)$ functions.
More precisely, the integrands dealt with in literature depend on the full gradient 
with some additional structure conditions: they are either convex and depending (essentially) 
on the modulus of the gradient (cf. \cite{CarrieroLeaci91,FonsecaFusco97,FuscoMingioneTrombetti})
or they are specific polyconvex integrands in two dimensions, i.e.~$n=2$
(cf. \cite{AcerbiFonsecaFusco_SNS97,AcerbiFonsecaFusco_Edin97}).}

In this paper we study existence for \eqref{eqgriffintro} in two spatial dimensions;
{therefore the main difference with the  results quoted above is the dependence of the bulk 
energy density on the linear elastic strain rather than on the full deformation gradient. Indeed,}
we assume that $\C$ is a symmetric linear map from $\R^{n\times n}$ to itself with the properties
\begin{equation}\label{eqassC}
 \C(\xi-\xi^T)=0 \text{ and } \C \xi\cdot \xi \ge c_0 |\xi+\xi^T|^2 \text{ for all } \xi\in\R^{n\times n}.
\end{equation}
This includes of course as a special case isotropic elasticity, 
$\C\xi\cdot \xi = \frac14\lambda_1|\xi+\xi^T|^2+ \frac12\lambda_2 (\Tr\xi)^2$, where $\lambda_1$ and $\lambda_2$ are the Lam\'e constants.

Our main result is the following.
\begin{theorem}\label{theop2}
 Let $\Omega\subset\R^2$ be a bounded Lipschitz set, {$g\in 
 L^\infty(\Omega;\R^2)$}, let $\C$ obey the positivity condition \eqref{eqassC}, {$\beta>0$}, {$h(x,z):=\cost|z-g(x)|^2$ for some $\cost>0$.}
 Then the functional \eqref{eqgriffintro} has a minimizer in the class
 \begin{equation*}
  \calA:=\{(u,\Gamma): \Gamma\subset\overline \Omega \text{ closed, } u\in C^1(\Omega\setminus\Gamma;\R^2)\}.
 \end{equation*}
\end{theorem}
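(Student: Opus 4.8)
The plan is to follow the strategy of De Giorgi, Carriero and Leaci \cite{DegiorgiCarrieroLeaci1989}, adapted to the linearly elastic situation in which only $GSBD$-type compactness is at hand. First I would pass to the relaxed functional
\[
  \calF[u]:=\int_\Omega\Big(\tfrac12\,\C e(u)\cdot e(u)+\cost\,|u-g|^2\Big)\,dx+2\beta\,\calH^1(J_u),\qquad u\in GSBD^2(\Omega),
\]
observing that, since $h(x,z)=\cost|z-g(x)|^2$ is a quadratic fidelity term with $g\in L^\infty$, the problem is coercive without any prescribed boundary data: finiteness of $\calF[u]$ forces $u\in L^2(\Omega;\R^2)$, $e(u)\in L^2(\Omega;\R^{2\times2}_{\sym})$ and $\calH^1(J_u)<\infty$. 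I would then establish, in order, that $\calF$ attains its minimum, that the jump set of a minimizer is essentially closed in $\Omega$, and that such a minimizer paired with the closure of its jump set solves the strong problem in $\calA$.

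\emph{Existence of a relaxed minimizer.} Along a minimizing sequence $(u_k)$, the fidelity bound controls $\|u_k\|_{L^2}$, the positivity condition \eqref{eqassC} controls $\|e(u_k)\|_{L^2}$, and $\calH^1(J_{u_k})$ is bounded; the compactness theorem in $GSBD^2$ then provides, up to a subsequence, a limit $u^*\in GSBD^2(\Omega)$ with $u_k\to u^*$ in measure and $e(u_k)\rightharpoonup e(u^*)$ weakly in $L^2$. I would get minimality of $u^*$ from lower semicontinuity: the elastic and fidelity integrals are convex, hence weakly $L^2$-lower semicontinuous, while $u\mapsto\calH^1(J_u)$ is lower semicontinuous along sequences converging in measure with equibounded symmetrized gradients. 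A truncation of $u^*$ (composition with the projection onto a large ball, which increases none of the three terms) additionally yields $u^*\in L^\infty(\Omega;\R^2)$, which will make the fidelity term a subcritical perturbation in the decay step.

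\emph{Essential closedness of $J_{u^*}$ --- the hard part.} As a minimizer of $\calF$, $u^*$ is a local almost-minimizer of the Griffith energy (the elastic plus surface part of $\calF$), the bounded fidelity term contributing only an error $O(\rho^2)=o(\rho)$ on a ball $B_\rho$. The core of the argument --- and what the rest of the paper is devoted to --- is a De Giorgi--Carriero--Leaci type \emph{decay estimate}: there is a universal $\eps_0>0$ such that whenever the scale-invariant quantity $\rho^{-1}\big(\int_{B_\rho(x)}|e(u^*)|^2\,dx+\calH^1(J_{u^*}\cap B_\rho(x))\big)$ is below $\eps_0$ for some ball $B_\rho(x)\subset\Omega$, then $\calH^1(J_{u^*}\cap B_{\rho/2}(x))=0$. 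The genuinely new point in proving this, and the one I expect to be the main obstacle, is that the coarea formula underpinning the scalar theory is not available for vector fields and must be replaced by an approximation of $SBD^p$ functions with small jump set by Sobolev functions --- precisely the tool developed in this paper, and the source of the restriction to dimension two; the passage from the $GSBD^2$ minimizer $u^*$ to this $SBD$ framework relies on the $GSBD^p$-to-$SBV^p$ energy approximation of the companion paper. Granted the decay estimate, a by now standard contradiction-and-covering argument upgrades the trivial remark ``$x\in\overline{J_{u^*}}\cap\Omega$ implies $\calH^1(J_{u^*}\cap B_\rho(x))>0$ for all small $\rho$'' to a quantitative density lower bound $\calH^1(J_{u^*}\cap B_\rho(x))\ge c_1\rho$, uniform for $x\in\overline{J_{u^*}}\cap\Omega$ and $\rho$ small. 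Since the finite measure $\mu:=\calH^1\res J_{u^*}$ has vanishing upper $1$-density $\calH^1$-a.e.\ on the $\mu$-null set $\Omega\setminus J_{u^*}$ (a standard measure-density fact, see \cite{AmbrosioFuscoPallara}), whereas the density bound forces $\overline{J_{u^*}}\cap\Omega\subset\{\Theta^{*1}(\mu,\cdot)\ge c_1\}$, I would conclude $\calH^1\big((\overline{J_{u^*}}\setminus J_{u^*})\cap\Omega\big)=0$.

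\emph{Conclusion.} I would set $\Gamma:=\overline{J_{u^*}}\cap\overline\Omega$, which is closed; essential closedness gives $\calH^1(\Gamma\cap\Omega)=\calH^1(J_{u^*}\cap\Omega)$, hence $E[\Gamma,u^*]=\calF[u^*]$. On the open set $\Omega\setminus\Gamma$ the field $u^*$ has no jump and is a local minimizer of the elastic energy up to the bounded lower-order fidelity term, so the regularity result for vectorial elliptic problems of elasticity type from the companion paper yields $u^*\in C^1(\Omega\setminus\Gamma;\R^2)$, i.e.\ $(u^*,\Gamma)\in\calA$. Finally I would compare with an arbitrary $(\Gamma',v)\in\calA$: if $E[\Gamma',v]<\infty$ then $\Gamma'$ is closed with $\calH^1(\Gamma'\cap\Omega)<\infty$ and $v\in C^1(\Omega\setminus\Gamma';\R^2)$ with $e(v)\in L^2$, so, by a standard slicing/extension argument, $v$ (set equal to $0$ on $\Gamma'$) lies in $GSBD^2(\Omega)$ with $\calH^1(J_v)\le\calH^1(\Gamma'\cap\Omega)$, whence $E[\Gamma',v]\ge\calF[v]\ge\calF[u^*]=E[\Gamma,u^*]$. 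This shows $(u^*,\Gamma)$ minimizes \eqref{eqgriffintro} in $\calA$, which is the claim of Theorem~\ref{theop2}.
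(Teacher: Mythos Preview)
Your overall strategy --- pass to the relaxed $GSBD^2$ functional, find a minimizer by compactness, prove essential closedness of its jump set via a decay estimate, and then compare with strong competitors --- is exactly the paper's route, and your final comparison step matches the proof of Theorem~\ref{theootherp} essentially line by line.

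There is, however, a genuine gap in your treatment of the fidelity term. You propose to obtain $u^*\in L^\infty$ by ``composition with the projection onto a large ball, which increases none of the three terms'', and then to dismiss the fidelity contribution as an $O(\rho^2)$ perturbation in the decay step. This truncation does \emph{not} work in $(G)SBD$: if $\varphi:\R^2\to\R^2$ is the radial projection onto a ball, the chain rule gives (formally) $D(\varphi\circ u)=D\varphi(u)\,Du$, and the symmetric part of this product is \emph{not} controlled by $e(u)$ alone --- it involves the full gradient, which is not available for $BD$ functions. Hence the elastic term $\int\C e(\varphi\circ u)\cdot e(\varphi\circ u)\,dx$ is in general not bounded by $\int\C e(u)\cdot e(u)\,dx$. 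The paper flags precisely this point (``for the above mentioned lack of truncation techniques, such a term plays a nontrivial role'') and handles it differently: in the blow-up analysis (Proposition~\ref{propconvenerg}) the rescaled fidelity coefficient $\kappa_h=\kappa\rho_h^p\to0$, and one tracks the weak limit $\overline a$ of $\kappa_h^{1/p}u_h$, which turns out to be an affine map with $e(\overline a)=0$; the decay of its contribution $\int_{B_\tau}|\overline a|^p\,dx$ is then obtained via Lemma~\ref{affine} inside the proof of Lemma~\ref{lemmadecay}. In short, you cannot sidestep the fidelity term by an a priori $L^\infty$ bound; it must be carried through the decay argument.
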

This result was announced in \cite{ContiFocardiIurlano2016-CRAS}. 

We also consider a generalization of the basic model \eqref{eqgriffintro} with $p$-growth, {which may be appropriate
for the study of fracture models with nonlinear constitutive relations that account for damage and plasticity, 
see for example \cite[Sect. 10 and 11]{hutchinson1989course} and references therein.}
We replace the quadratic energy density and the lower order term by the functions
\begin{eqnarray}\label{eqdeffintro}
& f_\parametermu(\xi):=\frac 1p\left(\big(\mathbb{C}\xi\cdot\xi+\parametermu\big)^{\sfrac p2}-\parametermu^{\sfrac p2}\right),\\
& {h(x,z):=\cost |z-g(x)|^p,}\nonumber
\end{eqnarray}
where $\parametermu\geq 0$ {and $\cost>0$ are} parameters and {$g\in L^\infty(\Omega;\R^2)$}.
We remark that for $\parametermu>0$ and for small {strains} $\xi$ this energy reduces to linear 
elasticity, $f_\parametermu(\xi)=\frac12 \parametermu^{p/2-1} \C\xi\cdot \xi + O(|\xi|^3)$. For large $\xi$ it behaves, 
up to multiplicative factors, as $|\xi+\xi^T|^p$, which is for example appropriate for models that describe plastic deformation at large strains. We obtain the following. 
\begin{theorem}\label{theootherp}
 Let $\Omega\subset\R^2$ be a bounded Lipschitz set, $p\in (1,\infty)$, 
 {$\mu\ge0$,} {$\kappa,\beta>0$,} {$g\in L^\infty(\Omega;\R^2)$ 
 if $p\in(1,2]$ and  $g\in W^{1,p}(\Omega;\R^2)$ if $p\in(2,\infty)$}, let $\C$ 
 obey the positivity condition \eqref{eqassC}, and 
 let $f_\parametermu$ be as in \eqref{eqdeffintro}.
 Then the functional
 \begin{equation}\label{eqgriffintrop}
 {E_p[\Gamma,u]:=\int_{\Omega\setminus\Gamma} (f_\parametermu(e(u))+\cost|u-g|^p) dx 
 +2\beta \calH^{1}(\Gamma\cap\Omega)} 
\end{equation}
has a minimizer in the class
 \begin{equation}
  \calA_p:=\{(u,\Gamma): \Gamma\subset\overline \Omega \text{ closed, } u\in C^1(\Omega\setminus\Gamma;\R^2)\}.
 \end{equation}
\end{theorem}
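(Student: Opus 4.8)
The strategy follows the classical De Giorgi–Carriero–Leaci scheme, adapted to the vectorial linearly elastic setting via the tools announced in the abstract: the $SBD^p$-to-Sobolev approximation with small jump set, and the regularity theory for elliptic systems of elasticity type. The plan is to pass first to the relaxed problem on $GSBD^p(\Omega;\R^2)$, obtain a minimizer there by the direct method, and then prove that its jump set is essentially closed, so that the pair $(u,\overline{J_u})$ — after a regularity bootstrap away from $\overline{J_u}$ — lies in $\calA_p$ and is in fact a minimizer of $E_p$ itself.

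\emph{Step 1 (Relaxation and compactness).} I would introduce the relaxed functional
\[
 \calE_p[u]:=\int_\Omega\big(f_\parametermu(e(u))+\cost|u-g|^p\big)\,dx+2\beta\calH^1(J_u\cap\Omega)
\]
on $GSBD^p(\Omega;\R^2)$, and show $\inf\calA_p E_p=\min_{GSBD^p}\calE_p$: the inequality $\ge$ is immediate by taking $(u,\Gamma)\in\calA_p$ and noting $u\in GSBD^p$ with $J_u\subset\Gamma$; the reverse will follow once essential closedness is established. Existence of a minimizer $u$ of $\calE_p$ follows from the $GSBD^p$ compactness and lower semicontinuity theorems together with coercivity: the term $\cost|u-g|^p$ controls $\|u\|_{L^p}$ (using $g\in L^\infty\subset L^p$), $f_\parametermu(e(u))\ge c|e(u)|^p-C$ by \eqref{eqassC} and convexity of $t\mapsto t^{p/2}$, and $\calH^1(J_u)$ is bounded; the lower-order term is continuous under the $L^p$ (indeed $L^0$) convergence provided by compactness.

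\emph{Step 2 (Density lower bound and decay).} The heart of the argument is the vectorial decay estimate: for a minimizer $u$ of $\calE_p$ there are $\vartheta\in(0,1)$ and $\tau>0$ so that if $B_\rho(x_0)\subset\Omega$ and $\calH^1(J_u\cap B_\rho(x_0))\le\tau\rho$, then $\calH^1(J_u\cap B_{\vartheta\rho}(x_0))\le\frac12\vartheta^{\,}\,\calH^1(J_u\cap B_\rho(x_0))$ — or more precisely the usual hybrid statement controlling the rescaled surface energy. To prove this one argues by contradiction along a sequence of balls with vanishing rescaled jump, uses the $SBD^p$-with-small-jump approximation to replace $u$ (after rescaling) by a Sobolev competitor on a slightly smaller ball at a controlled surface cost, applies the elasticity regularity theory to the limiting Sobolev function to get Campanato-type decay of $e(u)$, and exploits minimality to transfer the decay back to $u$; the lower-order term $\cost|u-g|^p$ contributes only a term of order $\rho^{\,2}$ (when $p\le2$, using $g\in L^\infty$; when $p>2$, here the hypothesis $g\in W^{1,p}$ is exactly what gives the needed Morrey-type estimate $\int_{B_\rho}|u-g|^p\le C\rho^{2}+$ a fast-decaying remainder, making it a genuine higher-order perturbation). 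Iterating the decay gives $\calH^1(J_u\cap B_r(x_0))\le Cr^{1+\alpha}$ for every $x_0$ in a neighborhood of a point where the rescaled jump is initially small, hence $\calH^1$-a.e.\ point is either a Lebesgue point of $J_u$ or has density zero; a covering argument then yields the essential closedness $\calH^1(\Omega\cap J_u)=\calH^1(\Omega\cap\overline{J_u})$.

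\emph{Step 3 (Conclusion).} Set $\Gamma:=\overline{J_u}\cap\overline\Omega$; by Step 2, $\calH^1(\Gamma\cap\Omega)=\calH^1(J_u\cap\Omega)$. On $\Omega\setminus\Gamma$ the function $u$ is a local minimizer of $v\mapsto\int(f_\parametermu(e(v))+\cost|v-g|^p)$, so elliptic regularity for the elasticity-type system gives $u\in C^1(\Omega\setminus\Gamma;\R^2)$ (the growth and ellipticity of $f_\parametermu$ from \eqref{eqassC}, and the regularity of $g$, are what is needed here). Hence $(u,\Gamma)\in\calA_p$ and $E_p[\Gamma,u]=\calE_p[u]=\min_{GSBD^p}\calE_p\le\inf_{\calA_p}E_p$, so $(u,\Gamma)$ is a minimizer in $\calA_p$, which also closes the relaxation identity in Step 1.

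\emph{Main obstacle.} The decisive difficulty is Step 2: in the linearly elastic vectorial setting the coarea formula — the engine of the original De Giorgi–Carriero–Leaci truncation argument — is unavailable, since $e(u)$ does not control $\nabla u$ pointwise and $GSBD^p$ functions need not be integrable. Everything hinges on the two-dimensional approximation theorem that replaces a $GSBD^p$ (or $SBD^p$) function with small jump set by a Sobolev function on most of the ball, with the measure of the bad set and the jump added controlled by $\calH^1(J_u)$; obtaining this with the sharp estimates, and then coupling it cleanly with the vectorial Campanato decay for the elasticity system, is the technically demanding core, and is the reason the result is confined to $n=2$.
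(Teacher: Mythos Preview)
Your overall architecture matches the paper's: relax to $GSBD^p$, obtain a minimizer by Dal Maso's compactness, prove a decay/density lower bound via blow-up and the $SBD^p$-to-Sobolev approximation (Proposition~\ref{prop:ricopr}), deduce essential closedness of $J_u$, and finish with elliptic regularity on $\Omega\setminus\overline{J_u}$. The final comparison $E_p[\overline{J_u},u]=G(u)\le G(v)\le E_p[\Gamma,v]$ is exactly how the paper closes.

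There is, however, a genuine gap in your Step~2: your treatment of the fidelity term is the scalar argument and does not survive in the linearly elastic setting. You assert that $\cost\int_{B_\rho}|u-g|^p$ ``contributes only a term of order $\rho^2$'', but that relies on an $L^\infty$ bound for $u$ obtained by truncation, which is unavailable here (no componentwise maximum principle, and truncation destroys $e(u)$). In the blow-up the rescaled lower-order coefficient $\cost\rho_h^p$ tends to zero, but the rescaled functions $v_h$ need not be bounded in $L^p$, so the product $\cost\rho_h^p\int|v_h|^p$ does \emph{not} vanish: one only knows it is bounded. The paper handles this in Proposition~\ref{propconvenerg} by showing that $\cost_h^{1/p}v_h$ converges strongly in $L^p_{\loc}$ to an affine map $\overline a$ with $e(\overline a)=0$, so that the blow-up limit energy is $\int_{B_\rho}f_0(e(u))+\int_{B_\rho}|\overline a|^p$, and the extra affine contribution must be carried through the decay computation (Lemma~\ref{lemmadecay}) and absorbed using Lemma~\ref{affine}. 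Without this, the contradiction in the decay lemma does not close.

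A second, smaller correction: the hypothesis $g\in W^{1,p}$ for $p>2$ is not used to control the fidelity term in the decay as you suggest; it enters only through the elliptic regularity statement (Theorem~\ref{t:regularity}) needed to conclude $u\in C^1$ on $\Omega\setminus\overline{J_u}$. The density lower bound itself (Lemma~\ref{l:lowbound}, Corollary~\ref{c:dlbjump}) requires only $g\in L^\infty$.
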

\begin{remark}
{The assumption $g\in W^{1,p}(\Omega;\R^2)$ if $p>2$ is probably 
of technical nature and depends on the elliptic regularity results {discussed} in Section~\ref{secregularity}.}
\end{remark}

In the last years several approaches have been proposed to show existence for 
$E_\mathrm{MS}$ after the seminal paper by De Giorgi, Carriero, and Leaci \cite{DegiorgiCarrieroLeaci1989} in which the result
has been first established (cp. \cite{CarrieroLeaci91, FonsecaFusco97, DalMasoMorelSolimini, MaddalenaSolimini, David2005, DeLellisFocardi, BucurLuckhaus}, 
and \cite{FocardiReview}  
for a recent review). Here we follow 
the general strategy of proof by De Giorgi, Carriero, and Leaci \cite{DegiorgiCarrieroLeaci1989}, 
although several new difficulties inherent to 
{the dependence of the bulk energy density on the symmetrized gradient} have to be faced. 

We start {off} writing the relaxed formulation of \eqref{eqgriffintro}, which 
{for $\kappa>0$} has
a minimizer in the space {$GSBD^p(\Omega)$ since no $L^\infty$ bound is imposed (see below for the precise definition of the functional setting). 
This space and its companion $SBD^p$ are, however, much less understood than the scalar analogues $(G)SBV^p$, though in the last few years there have been several contributions in this direction \cite{gbd, ChambolleContiFrancfort2016, ContiFocardiIurlano2016, ContiFocardiIurlanoRepr, ContiFocardiIurlanoGBD, Friedrich1, Friedrich2, FriedrichSolombrino}. 
In particular, since apart from trivial cases the Chain rule formula does not hold in $SBD^p$, the very definition of the generalized space $GSBD^p$ given in \cite{gbd} requires a different approach with respect 
to the standard definition of $GSBV^p$ as the set of functions whose truncations belong to $SBV^p$. 

The proof given in \cite{DegiorgiCarrieroLeaci1989} of the closure condition \eqref{eqjuclosedintro} 
in the scalar case is based on a careful analysis of sequences of $SBV^p$ (quasi-)minimizers
with vanishing jump energy, for which a priori no control of any Lebsgue norm is available.
The idea to circumvent this difficulty and to gain compactness in $SBV^p$ introduced} 
by De Giorgi, Carriero, and Leaci, however, makes substantial use of a Poincar\'e-type inequality for $SBV$ functions that is proven via the coarea formula, which does not extend to 
the vectorial case.
One key ingredient in our proof is then an approximation result for $SBD^p$ functions 
with small jump set with $W^{1,p}$ functions, stated in {Proposition~\ref{prop:ricopr} 
below, which permits to obtain an equivalent Poincar\'e-type inequality for $SBD^p$ functions, 
however restricted to two spatial dimensions (see \cite{ContiFocardiIurlanoRepr} for the proof).
We remark explicitly that this is the only issue in which we have to confine to two dimensions. 
Indeed, the other two key results of our approach have higher dimensional analogues.
More precisely, for $n\geq 3$ the full elliptic regularity of solutions to linear elasticity 
type systems stated in Theorem~\ref{t:regularity} has a partial regularity counterpart 
with an estimate on the Hausdorff dimension of the singular set (see for more details the 
comments in Section~\ref{secregularity}), and the strong approximation result of $GSBD^p$ 
functions with $SBV^p\cap L^\infty$ ones in Theorem~\ref{flaviana} holds without any 
dimensional limitation. 
Therefore an extension of the Poincar\'e-type inequality for $SBD^p$ functions to higher dimensions, 
would {lead to corresponding generalizations of} Theorems~\ref{theop2} and \ref{theootherp}, {at least for $p=2$}.

Going back to commenting the proof, we note that} rather than extending the 
{quoted Poincar\'e-type inequality for $SBD^p$ functions} to $GSBD^p$ {ones}, we argue 
by approximating $GSBD^p$ functions by $SBD^p$ ones in energy. The latter issue is discussed in 
\cite{Iurlano2014} for $p=2$ and any dimension, see Section~\ref{prelim} below. 
The case of a general exponent $p\in(1,\infty)$ {is} established in a companion paper 
\cite{ContiFocardiIurlanoGBD} {without dimensional restrictions and requires a nontrivial modification of
the original arguments in \cite{Chambolle2004a,Chambolle2004b,Iurlano2014}.}
Since the $SBD^p\mbox{-}GSBD^p$ approximation does not preserve the boundary values, one additionally needs to suitably combine the two approximation results {carefully.

Let us also stress that under the working assumption that $g$ is bounded, by the maximum principle, 
i.e.~by truncations, the fidelity term in the scalar case is a lower order perturbation that originates and justifies 
the more general regularity theory developed in literature for Mumford-Shah quasi-minimizers. 
In the vector valued setting {of interest here} instead, for the above mentioned lack of truncation 
techniques, such a term plays a nontrivial role in the asymptotic analysis of sequences with infinitesimal jump energy 
and has to be taken into account (cf. Proposition~\ref{propconvenerg}).

In any case, the asymptotics of such sequences in the framework under investigation 
is related, similarly to the scalar setting,} 
to minimizers of an elliptic problem. In the scalar case, standard elliptic regularity directly 
gives the necessary 
decay estimates for the energy {(cf. \cite{AmbrosioFuscoPallara,  FonsecaFusco97})}. 
{The case of the system of linearized elasticity is also well-known in literature.
Instead, for systems of linearized elasticity type with $p\neq 2$} the regularity is less standard, 
and we summarize the results we need in Section~\ref{secregularity}. 
{Details and extensions to higher dimensions are discussed elsewhere \cite{ContiFocardiIurlanoRegularity}.
In particular, partial regularity with an explicit estimate on the Hausdorff dimension of the potential 
singular set are established in \cite{ContiFocardiIurlanoRegularity}. 
We remark that it is a major open problem to prove or disprove full regularity in the case $p\neq 2$. 
Despite this, the mentioned Hausdorff dimension estimate is particularly relevant in view of the 
possible extensions of the existence of minimizers of the energy in \eqref{eqgriffintro} in higher dimensions}.


{
Our {main contribution} is a statement on the regularity of weak 
{local minimizers (cf. \eqref{e:Glocmin} for the precise definition)}. 
In particular, we show (see Theorem \ref{t:theogsbdpmin} below) that if {$u\in GSBD^p(\Omega)$} 
 is a local minimizer for the weak formulation then 
 $\calH^1(\Omega\cap \overline {J_u}\setminus J_u)=0$ and $u\in C^1(\Omega\setminus \overline{J_u};\R^2)$.
The condition $\kappa>0$ is only required for establishing the existence of a weak minimizer in $GSBD^p(\Omega)$ via 
\cite[Theorem 11.3]{gbd}.}

{Let us conclude the introduction by outlining the organization of the paper. We first provide the technical 
preliminaries: in Section~\ref{secregularity} we state the needed elliptic decay estimates, then in 
Section~\ref{prelim} we introduce the spaces $SBD^p$ and $GSBD^p$ and discuss the quoted approximation results.
In Section~\ref{s:dlb} we prove the density lower bound and essential closedness of the jump set for local minimizers.
Finally, in Section~\ref{mainproof} we prove the main results Theorem~\ref{theop2} and Theorem~\ref{theootherp}.}

\section{Preliminaries}

\subsection{Regularity for generalized linear elasticity systems}\label{secregularity}

In this section we investigate the regularity properties of minimizers of elastic type energies.
Despite several related contributions present in literature (see \cite{FuchsSeregin,DieningEttwein,DieningKaplicky13} 
and references therein), we have not found the exact statements needed for our purposes. 
We summarize here the results of interest, and {provide} elsewhere \cite{ContiFocardiIurlanoRegularity} 
a self-contained proof of the elliptic decay estimates {as well as of full and partial regularity 
for local minimizers according to the dimensional setting of the problem,} following the techniques of 
\cite{AcerbiFusco89,DieningEttwein,DieningKaplicky13,GiaquintaMartinazzi2012,Giusti}. 

We first present a decay property of the $L^p$-norm of $e(u)$, with $u$ {a local} minimizer of 
$v\mapsto\int_\Omega f_0(e(v))dx$, 
{i.e.,
\[
 \int_\Omega f_0(e(u))dx\leq \int_\Omega f_0(e(v))dx,
\]
for all $v\in W^{1,p}(\Omega;\R^n)$ satisfying $\{v\neq u\}\subset\subset\Omega$.
Such a result is necessary} to prove the density lower bound inequality in Section~\ref{s:dlb}. 
Since {in this paper the decay property} will be applied to the blow-ups of minimizers, 
there are no lower order terms,  
{therefore we state the result only for the functional with $\kappa=\parametermu=0$
(cf. \cite{ContiFocardiIurlanoRegularity} for the proof given in the general case)}.
\begin{proposition}[{\cite[Proposition~3.4]{ContiFocardiIurlanoRegularity}}]\label{p:decayVmu}
Let {$n=2$, $\Omega\subset\R^2$} open, $p\in(1,\infty)$.
Let $u\in W^{1,p}(\Omega;\R^2)$ be a local minimizer of 
\[
v\mapsto\int_\Omega f_0(e(v))dx.
\] 
Then, for all $\gamma\in(0,2)$ there is a 
constant ${c_\gamma}=c(\gamma,\mathbb{C},p)>0$ such that 
{for all $\rho<R\leq 1$ such that 
$B_{R}(x_0)~\subset\hskip-0.125cm\subset\Omega$} it holds
\begin{equation*}
 \int_{B_{\rho}(x_0)}{f_0(e(u))}dx\leq {c_\gamma}\,\left(\frac{\rho}R\right)^{2-\gamma} 
 \int_{B_{R}(x_0)}{f_0(e(u))}dx\,.
\end{equation*}
\end{proposition}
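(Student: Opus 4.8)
The plan is to prove the decay estimate by the classical \emph{Campanato-type iteration}: show that the rescaled energy $\phi(\rho):=\rho^{-(2-\gamma)}\int_{B_\rho(x_0)}f_0(e(u))\,dx$ satisfies a near-monotonicity inequality of the form $\phi(\theta R)\le C\theta^{\,\gamma}\,\phi(R)$ for a suitable fixed $\theta\in(0,1)$ and all admissible radii, which upon iteration and summation of a geometric series yields the claim with a constant depending only on $\gamma$, $\C$ and $p$. The engine behind the single-step inequality is a \emph{$C^{1,\alpha}$ (or $C^1$) a priori estimate} for local minimizers of $v\mapsto\int f_0(e(v))$ on a ball: by the regularity theory for such $p$-growth elasticity-type functionals (the results quoted from \cite{ContiFocardiIurlanoRegularity}, obtained along the lines of \cite{AcerbiFusco89,DieningEttwein,DieningKaplicky13}), a local minimizer on $B_R$ is $C^{1,\alpha}_{\mathrm{loc}}$, and hence $e(u)$ is Hölder continuous in the interior with the scaling-invariant bound
\[
\sup_{B_{R/2}(x_0)}|e(u)|^p \le \frac{c}{R^2}\int_{B_R(x_0)}|e(u)|^p\,dx,
\]
together with an oscillation estimate $\mathrm{osc}_{B_{\theta R}}e(u)\le C\theta^{\alpha}\,(\fint_{B_R}|e(u)|^p)^{1/p}$.

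Concretely, first I would record the two-sided comparison $c^{-1}|e(u)|^p - C \le f_0(e(u)) \le C(|e(u)|^p+1)$ coming from \eqref{eqassC} and the definition \eqref{eqdeffintro} with $\mu=0$; in the homogeneous case $\mu=\kappa=0$ one has in fact $f_0(\xi)=\frac1p(\C\xi\cdot\xi)^{p/2}$, so $f_0$ is $p$-homogeneous and comparable to $|\xi+\xi^T|^p$, which makes all the estimates genuinely scale-invariant. Next I would fix $x_0$ and $R$ with $B_R(x_0)\subset\subset\Omega$ and, for $\rho\le R/2$, split $e(u)=\big(e(u)-e(u)(x_0)\big)+e(u)(x_0)$ on $B_\rho(x_0)$. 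Because $u$ is a local minimizer, $u$ restricted to $B_{R/2}$ competes with affine maps whose symmetrized gradient equals the constant $e(u)(x_0)$ (up to an added skew part, which $f_0$ does not see by \eqref{eqassC}); using minimality plus the strict convexity / quasiconvexity of $f_0$ in the $V$-function sense, one gets the decay
\[
\int_{B_\rho(x_0)} \big|e(u)-\langle e(u)\rangle_{B_\rho}\big|^p \, dx \le C\Big(\frac{\rho}{R}\Big)^{2+p\alpha}\int_{B_R(x_0)}\big|e(u)-\langle e(u)\rangle_{B_R}\big|^p\,dx,
\]
the standard Campanato-space excess-decay for $C^{1,\alpha}$ minimizers. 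Feeding this and the $L^\infty$ bound on $e(u)$ back into $\int_{B_\rho}f_0(e(u))\le C\int_{B_\rho}|e(u)|^p$ and choosing $\gamma$ so that $2-\gamma<\min(2,2+p\alpha)$, I would obtain $\int_{B_\rho}f_0(e(u))\le c_\gamma(\rho/R)^{2-\gamma}\int_{B_R}f_0(e(u))$ first for $\rho\le R/2$, and then for all $\rho< R\le 1$ by absorbing the dyadic factor into $c_\gamma$.

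The main obstacle is supplying (or correctly invoking) the interior $C^{1,\alpha}$-regularity and the attendant excess-decay for minimizers of the degenerate/singular functional $\int f_0(e(u))$: unlike the full-gradient $p$-Laplacian case, here the integrand depends only on the \emph{symmetrized} gradient, so one must combine Korn-type inequalities in $L^p$ (valid for $p\in(1,\infty)$) with the $V$-function monotonicity estimates $\big(V(\xi)-V(\eta)\big)\cdot(\xi-\eta)\ge c|V(\xi)-V(\eta)|^2$ to run the harmonic (here: linear-elasticity) approximation scheme and get Hölder continuity of $e(u)$. In this paper that regularity is taken as given — it is exactly the content of \cite[Proposition~3.4]{ContiFocardiIurlanoRegularity} and the surrounding results — so at the level of the present argument the obstacle is reduced to carefully tracking the scaling of all constants (the $p$-homogeneity of $f_0$ when $\mu=\kappa=0$ is what makes $c_\gamma$ independent of $R$) and to the elementary but slightly delicate bookkeeping of passing from "$\rho$ small compared to $R$'' to "all $\rho<R$,'' together with the requirement $R\le1$ which is only needed in the inhomogeneous case treated in \cite{ContiFocardiIurlanoRegularity} and is harmless here.
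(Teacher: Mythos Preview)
This paper does not contain a proof of Proposition~\ref{p:decayVmu}: the statement is imported verbatim from the companion paper \cite{ContiFocardiIurlanoRegularity} (as Proposition~3.4 there), and the present text explicitly says the proof is ``provide[d] elsewhere''. There is therefore no argument here to compare your proposal against.

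That said, your proposal has a structural problem worth naming. You reduce the decay estimate to the interior $C^{1,\alpha}$ regularity of $e(u)$ together with the scale-invariant $L^\infty$ bound $\sup_{B_{R/2}}|e(u)|^p\le cR^{-2}\int_{B_R}|e(u)|^p$. But that $C^{1,\alpha}$ statement is Theorem~\ref{t:regularity}, i.e.\ \cite[Proposition~4.3]{ContiFocardiIurlanoRegularity}, \emph{not} Proposition~3.4; when you write that the needed regularity ``is exactly the content of \cite[Proposition~3.4]{ContiFocardiIurlanoRegularity}'' you are pointing at the very decay estimate you are trying to prove. More importantly, the numbering in the companion paper (decay in Section~3, H\"older regularity in Section~4) reflects the standard logical order for degenerate $p$-growth systems: one first establishes an energy/excess decay by a comparison or compactness argument against the linearized (constant-coefficient elasticity) system, and only afterwards upgrades this to $C^{1,\alpha}$ via Campanato's characterization. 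Your route runs this backwards and is therefore almost certainly circular at the level of the companion paper.

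If one is willing to grant Theorem~\ref{t:regularity} as an independent black box, then your argument does yield the inequality---in fact with the sharper exponent $2$ in place of $2-\gamma$ once $\rho\le R/2$, thanks to the $p$-homogeneity of $f_0$. But as a proof of Proposition~\ref{p:decayVmu} within the intended logical architecture it does not stand on its own; the substantive work (the excess decay for the symmetric-gradient $p$-energy) has merely been relocated rather than carried out.
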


In the quadratic case $p=2$ it is well-known that the minimizer 
$u$ is $C^\infty(\Omega;\R^n)$ in any dimension {as long as $g$ is smooth} (see for instance \cite[Theorem~10.14]{Giusti} 
or \cite[Theorem~5.14, Corollary~5.15]{GiaquintaMartinazzi2012}). 
Below we {state a $C^{1,\alpha}$ regularity result} in the two dimensional setting ({ see \cite[Section 4]{ContiFocardiIurlanoRegularity}} for the proof and for extensions to higher 
dimensions).
\begin{theorem}[{\cite[Proposition~4.3]{ContiFocardiIurlanoRegularity}}]\label{t:regularity}
Let $n=2$, $\Omega\subset\R^2$ open, $p\in(1,\infty)$ {$\cost\ge0$} and $\parametermu\geq 0$, 
$g\in W^{1,p}(\Omega;\R^2)$ if $p>2$, {$g\in L^\infty(\Omega;\R^2)$} if $p\in(1,2]$. 
Let $u\in W^{1,p}(\Omega;\R^2)$ be a local minimizer of 
\[
v\mapsto\int_\Omega f_\parametermu(e(v))dx{+\cost\int_\Omega|v-g|^pdx}.
\] 
Then, $u\in C^{1,\alpha}_\loc(\Omega;\R^2)$ for all $\alpha\in(0,1)$ if $\parametermu>0$, and for 
some $\alpha(p)\in(0,1)$ if $\parametermu=0$.
\end{theorem}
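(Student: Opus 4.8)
The plan is to run the standard indirect/blow-up route from $L^p$-minimization theory, exploiting the dimension-two structure to get full $C^{1,\alpha}$ rather than merely partial regularity. I would first reduce to the homogeneous case: the fidelity term $\kappa|v-g|^p$ is a $\kappa$-Lipschitz (for $p\le2$) or locally Lipschitz (for $p>2$, using $g\in W^{1,p}$) lower-order perturbation, so a local minimizer of the full functional is a $\Lambda$-minimizer of $v\mapsto\int f_\mu(e(v))$ in the sense that $\int_{B_r}f_\mu(e(u))\le\int_{B_r}f_\mu(e(v))+\Lambda r^{n}$ whenever $\{u\ne v\}\subset\subset B_r$; this is why $g\in W^{1,p}$ enters for $p>2$ (to control the perturbation after a change of variables/scaling). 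Next, by the Korn inequality in $W^{1,p}$ together with the coercivity $f_\mu(\xi)\ge c(|\xi+\xi^T|^p - C)$ from \eqref{eqassC}, one obtains a Caccioppoli inequality of the form $\int_{B_{r/2}}|\nabla u - A|^p\le C r^{-p}\int_{B_r}|u-Ax-b|^p$ for suitable affine maps (with $A$ constrained to the appropriate form dictated by the kernel of $\C$), plus the $\Lambda$-term. The core is then a one-step excess-decay lemma: defining $\Phi(x_0,r)=\fint_{B_r(x_0)}|\nabla u - (\nabla u)_{x_0,r}|^p$, one shows that for every $\tau\in(0,1)$ there is $\eps>0$ such that $\Phi(x_0,r)\le\eps$ implies $\Phi(x_0,\tau r)\le C\tau^{p}\Phi(x_0,r) + C\tau^{-n}\Lambda r$.

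The excess-decay lemma is proved by contradiction and blow-up: assuming it fails, rescale $u$ on a sequence of balls so that the rescaled excess equals $1$; the rescaled maps $w_j$ converge weakly in $W^{1,p}$ to a limit $w$ which, by the minimality passed to the limit and the uniform ellipticity of $D^2f_\mu$ at the relevant "reference strain" (here one distinguishes $\mu>0$, where the limiting operator is the genuine linear-elasticity system $\div(\C e(w))=0$ with smooth solutions, from $\mu=0$, where after rescaling one lands on the $p$-Laplacian-type system $\div(|e(w)|^{p-2}e(w))=0$, which is $C^{1,\alpha}$ by Uhlenbeck–Acerbi–Fusco–type theory — in dimension two this is known to hold for the full solution, not just off a singular set), solves a constant-coefficient or $p$-Laplacian-type elliptic \emph{system}; the decay estimate for that limiting system contradicts the assumed non-decay. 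The step where "$n=2$" is genuinely used is exactly this identification of the limiting $p$-Laplacian system together with the fact that in two dimensions its (local) minimizers are everywhere $C^{1,\alpha}$, so that no singular set survives the blow-up — in higher dimension one only gets this on the regular set and must separately estimate the Hausdorff dimension of the singular set, which is the content deferred to \cite{ContiFocardiIurlanoRegularity}.

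Once the iteration lemma is in hand, a standard argument shows that the set of points $x_0$ where $\liminf_{r\to0}\Phi(x_0,r)=0$ and $|(\nabla u)_{x_0,r}|$ stays bounded is relatively open and that on it $r\mapsto(\nabla u)_{x_0,r}$ is Cauchy with a Hölder modulus, giving $\nabla u\in C^{0,\alpha}_{\loc}$ there; and in dimension two, by the everywhere-regularity of the limiting system, that set is all of $\Omega$. For $\mu>0$ the limiting operator is uniformly elliptic regardless of the reference strain, so the excess decays geometrically at every scale and one recovers $C^{1,\alpha}$ for \emph{every} $\alpha\in(0,1)$ (in fact one could bootstrap to $C^\infty$ when $g$ is smooth, via Schauder); for $\mu=0$ the exponent $\alpha(p)$ is the one coming from the two-dimensional $p$-Laplacian system regularity and cannot in general be taken arbitrarily close to $1$.

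The main obstacle I anticipate is the blow-up analysis in the degenerate case $\mu=0$: one must handle the possibility that the rescaled reference strains $(\e(u))_{x_0,r}$ either vanish or blow up, leading after normalization to two distinct limiting systems (the $p$-Laplacian system in the "non-degenerate" regime and a linearized/$2$-homogeneous system in the "degenerate" regime), and in each case invoke the appropriate a priori estimate; keeping the constants uniform across these regimes, and making sure the $\Lambda r$ error term (with $\Lambda$ depending on $\|g\|$) is genuinely lower order after each rescaling, is the delicate bookkeeping. All of this is carried out in detail in \cite[Section~4]{ContiFocardiIurlanoRegularity}; here the statement is used only as an input to the density lower bound of Section~\ref{s:dlb}.
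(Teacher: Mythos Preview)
The paper does not actually prove Theorem~\ref{t:regularity}: it is stated as a quotation of \cite[Proposition~4.3]{ContiFocardiIurlanoRegularity}, and the surrounding text explicitly says the proof is ``provide[d] elsewhere \cite{ContiFocardiIurlanoRegularity} \ldots\ following the techniques of \cite{AcerbiFusco89,DieningEttwein,DieningKaplicky13,GiaquintaMartinazzi2012,Giusti}.'' You correctly recognize this in your final paragraph, and your sketch is a plausible and broadly accurate outline of the blow-up/excess-decay strategy that those references suggest, including the reduction to a $\Lambda$-minimizer, the Caccioppoli-plus-Korn step, and the dichotomy between the nondegenerate ($\mu>0$, linear-elasticity limit) and degenerate ($\mu=0$, $p$-growth limit) regimes.

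One caution on attribution and on where the two-dimensionality genuinely enters: the limiting system in the $\mu=0$ case is not the usual $p$-Laplacian in $\nabla w$ but rather the symmetrized-gradient analogue $\div\big((\C e(w)\cdot e(w))^{(p-2)/2}\C e(w)\big)=0$, so the Uhlenbeck/Acerbi--Fusco theory does not apply verbatim; the relevant regularity input is closer to the Diening--Ettwein/Diening--Kaplick\'y line for generalized Stokes-type systems, and the statement that local minimizers are \emph{everywhere} $C^{1,\alpha}$ in two dimensions for this symmetrized-gradient functional is not something you can simply cite off the shelf---it is precisely part of what \cite{ContiFocardiIurlanoRegularity} establishes. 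So your sentence ``in dimension two this is known to hold for the full solution'' is the point that actually requires work, not an input. Apart from that, since the present paper contains no proof to compare against, there is nothing further to adjudicate.
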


\subsection{Approximation of  \texorpdfstring{$SBD^p$}{SBDp} and  \texorpdfstring{$GSBD^p$}{GSBDp} functions}
\label{prelim}
We start by briefly  collecting the main properties of $GBD$ and $GSBD^p$ of interest to us.
Let $\Omega$ be a bounded open set in $\R^n$. 
If $u:\Omega\to\R^n$ is a Borel function, we say that $x\in\Omega$ is a point of approximate continuity for $u$ if there is $a\in\R^n$ such that for each $\varepsilon>0$
\begin{equation}\label{eqapprcont}
\lim_{r\to0}\frac{1}{r^n}\calL^n\left(B_r(x)\cap\{|u-a|\geq\varepsilon\}\right)=0. 
\end{equation}
We say that $x$ is a jump point, and we write $x\in J_u$, if there exist two distinct vectors $a^{\pm}\in \R^n$ and a unit vector $\nu\in \R^n$ such that
the approximate limit of the restriction of $u$ to $\{y\in\Omega:\ \pm(y-x)\cdot \nu>0\}$ is $a^{\pm}$. 

The space $BD(\Omega)$ of functions with bounded deformation in $\Omega$ and its subspace $SBD(\Omega)$ have been widely studied due to their role in the variational formulation of many problems in plasticity and fracture mechanics. Let us recall that the jump set $J_u$ of a function $u\in BD(\Omega)$ is countably $(\calH^{n-1},n-1)$-rectifiable and that for $\calH^{n-1}$-a.e. $x\in J_u$ the function $u$ has one-sided approximate limits $u^\pm(x)$ with respect to a suitable direction $\nu_u(x)$ normal to $J_u$ at $x$.
{We denote by $S_u$ the set of approximate discontinuity points, in the sense of the set of points where \eqref{eqapprcont} does not hold.}
Moreover one can define the approximate symmetric gradient $e(u)\in L^1(\Omega;\R^{n{\times} n})$. For further details and properties see \cite{temam,AmbrosioCosciaDalmaso1997,Babadjan2015,DephilippisRindler,gbd}.

The subspace $SBD^p(\Omega)$, $p>1$, contains all functions $u\in BD(\Omega)$ whose symmetric distributional derivative can be decomposed as 
\[
Eu=e(u)\calL^n\res\Omega
+(u^+-u^-)\odot\nu_u\calH^{n-1}\res J_u, 
\]
with $e(u)\in L^p(\Omega;\R^{n{\times} n})$ and $\calH^{n-1}(J_u)<\infty$. 
Fine properties and rigidity properties of $SBD^p$ have been highlighted in \cite{BellettiniCosciaDalmaso1998,Chambolle2004a,cha-gia-pon,ChambolleContiFrancfort2016,Friedrich1,Friedrich2,ContiFocardiIurlano2016}.

The generalized space $GSBD(\Omega)$ introduced in \cite{gbd} has proved to be the correct space where setting a number of problems in linearized elasticity, see \cite{Iurlano2014,FriedrichSolombrino}. 
An $\mathcal{L}^n$-measurable function $u\colon\Omega\to\mathbb{R}^n$ belongs to $GSBD(\Omega)$ if there exists {a bounded positive Radon measure} $\lambda_u\in\mathcal{M}_b^+(\Omega)$
such that the following condition holds for every $\xi\in\mathbb{S}^{n-1}$: 
for $\mathcal{H}^{n-1}$-a.e.\ $y\in\Omega^\xi$ the function $u^\xi_y$ 
defined by {$u^\xi_y(t):=u(y+t\xi)\cdot\xi$}
belongs to $SBV_\loc(\Omega^\xi_y)$, where $\Omega^\xi_y:=\{t\in\R: y+t\xi\in\Omega\}$, and for every Borel set $B\subset \Omega$ it satisfies
\begin{equation}\label{e:sl}
\int_{{\Omega}^\xi}\Big(|Du^\xi_y|(B^\xi_y\setminus J^1_{u^\xi_y})+\mathcal{H}^0(B^\xi_y\cap J^1_{u^\xi_y})\Big)d\mathcal{H}^{n-1}\leq
\lambda_u(B),
\end{equation}
where $J^1_{u^\xi_y}:=\{t\in J_{u^\xi_y}:|[u^\xi_y](t)|\geq1\}$.

If $u\in GSBD(\Omega)$, the aforementioned quantities $e(u)$ and $J_u$ are still well-defined, and are respectively integrable and rectifiable in the previous sense. In analogy to $SBD^p(\Omega)$, the subspace $GSBD^p(\Omega)$ includes all functions in $GSBD(\Omega)$ satisfying $e(u)\in L^p(\Omega;\R^{n{\times} n})$ and $\calH^{n-1}(J_u)<\infty$.

Next proposition states that a $GSBD^p$-function with a small jump set can be approximated by Sobolev functions. 
This is a minor reformulation of the result of  \cite{ContiFocardiIurlanoRepr} 
(see \cite{ChambolleContiFrancfort2016,Friedrich1,Friedrich2} for related works in $SBD^p$). 
Its proof is based on first covering the jump set with countably many balls with finite overlap and properties
(i) and (ii), and then in each ball $B$  constructing $w$ as a piecewise affine approximation to $u$ on a 
suitably chosen triangular grid, which refines towards $\partial B$ in such a way that grid segments do not 
intersect $J_u$, following a strategy developed in \cite{ContiSchweizer2}.

\begin{proposition}\label{prop:ricopr}
Let $p\in(1,\infty)$, $n=2$. 
There exist universal constants $c,\eta, {\xi}>0$ such that if $u\in SBD^p(B_{\rho})$, {$\rho>0$,} satisfies
\[
\Huno(J_u\cap B_{\rho})<{\eta\,(1-s)\frac\rho2}
\]
for some $s\in(0,1)$, then there are a 
countable family $\calF=\{B\}$ of closed balls {overlapping at most $\xi$ times} 
of radius $r_B<(1-s)\rho/2$ and center $x_B\in \overline B_{s\rho}$, 
and a field $w\in SBD^p(B_{\rho})$  such that
\begin{enumerate}
\item {$\eta r_B\leq \calH^1(J_u\cap B)\leq 2\eta r_B$} {for all $B\in\calF$;}
\item $\Huno\big(J_u\cap\cup_{\calF}\partial B\big)=\Huno\big((J_u\cap {B_{s\rho}})\setminus \cup_{\calF}B\big)=0$;
\item $w=u$ {$\Ln$-a.e.} on $B_{\rho}\setminus\cup_{\calF}B$;
\item {$w\in W^{1,p}(B_{s\rho};\Rdue)$} and $\Huno(J_w\setminus J_u)=0$;
\item for each $B\in\calF$ one has { $w\in W^{1,p}(B;\R^2)$ with}
\begin{equation}\label{e:volume}
 \int_{B}|e(w)|^pdx\leq c\int_{B}|e(u)|^pdx; 
\end{equation}
{and there exists a skew-symmetric matrix $A$ such that 
\begin{equation}\label{e:korn}
 \int_{B_{s\rho}\setminus\cup_{\calF}B}|\nabla u-A|^pdx\leq c\int_{B_{\rho}}|e(u)|^pdx; 
\end{equation}}
\item
$\cup_\calF B \subset {B_{\frac{1+s}{2}\rho}}$ and 
{$\sum_\calF\calL^2(B)\leq \frac c\eta\,\rho\,\calH^1(J_u\cap B_\rho)$};
\item if, additionally, $u\in L^\infty(B_{\rho};\Rdue)$ then $w\in L^\infty(B_{\rho};\Rdue)$ with
\[
\|w\|_{L^\infty(B_{\rho};\Rdue)}\leq \|u\|_{L^\infty(B_{\rho};\Rdue)}.
\]
\end{enumerate}
\end{proposition}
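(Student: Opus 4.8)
The plan is to follow the two-dimensional interpolation scheme of \cite{ContiSchweizer2}: first cover $J_u$ by a well-controlled family of balls, then inside each ball replace $u$ by a piecewise affine function built on a triangular grid that refines towards the boundary of the ball. The covering produces (i), (ii), (vi); the local construction produces (iii)--(v), (vii); and \eqref{e:korn} follows from (iv)--(v) by Korn's inequality. For the covering, recall that $\calH^1\LL J_u$ has density $1$ at $\calH^1$-a.e. point of $J_u$, so $\calH^1(J_u\cap\overline{B_r(x)})\sim 2r$ as $r\to0$, whereas for $r$ close to $(1-s)\rho/2$ one has $\calH^1(J_u\cap\overline{B_r(x)})\le\calH^1(J_u\cap B_\rho)<\eta(1-s)\rho/2<\eta r$ by hypothesis. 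As $r\mapsto\calH^1(J_u\cap\overline{B_r(x)})$ is nondecreasing and right-continuous, it must meet the band $[\eta r,2\eta r]$; hence for $\calH^1$-a.e. $x\in J_u\cap\overline{B_{s\rho}}$ there is $r_x\in(0,(1-s)\rho/2)$ with $\eta r_x\le\calH^1(J_u\cap\overline{B_{r_x}(x)})\le2\eta r_x$, and after discarding a null set of radii also $\calH^1(J_u\cap\partial B_{r_x}(x))=0$. Besicovitch's covering theorem applied to $\{\overline{B_{r_x}(x)}\}$ extracts a countable subfamily $\calF$ overlapping at most a universal number $\xi$ of times and still covering $\calH^1$-almost all of $J_u\cap\overline{B_{s\rho}}$: this is (i)--(ii). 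For (vi), $x_B\in\overline{B_{s\rho}}$ and $r_B<(1-s)\rho/2$ force $B\subset B_{(1+s)\rho/2}$, while $r_B<\rho$ and $r_B\le\eta^{-1}\calH^1(J_u\cap B)$ give $\sum_\calF\Ln(B)=\pi\sum_\calF r_B^2\le\pi\rho\,\eta^{-1}\sum_\calF\calH^1(J_u\cap B)\le\pi\xi\eta^{-1}\rho\,\calH^1(J_u\cap B_\rho)$.

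Fix $B=B_{r_B}(x_B)\in\calF$. Choose a triangulation $\mathcal T_B$ of $B$ that is coarse near $x_B$, refines dyadically towards $\partial B$, and has all triangles of bounded aspect ratio, so that the three unit edge directions $\tau$ of any triangle are uniformly spread and the tensors $\tau\otimes\tau$ form a quantitatively nondegenerate basis of symmetric $2\times2$ matrices. Each dyadic layer of $\mathcal T_B$ carries a free placement parameter; using the slicing characterization of $SBD^p$ --- for $\calH^1$-a.e. line the slice of $u$ is $W^{1,p}$, with slice-derivative energy controlled by $\int|e(u)|^p$ and slice-jump count by $\calH^1(J_u)$ --- and a mean-value argument in the parameter, one fixes the parameters so that at every scale (a) all grid segments miss $J_u$ and carry a $W^{1,p}$ slice of $u$, and (b) $\sum_\sigma\calH^1(\sigma)\int_\sigma|e(u)|^p\,d\calH^1$ over the segments $\sigma$ of the layer is comparable to $\int|e(u)|^p$ over the corresponding annulus; here the smallness bound $\calH^1(J_u\cap B_\rho)\le\eta(1-s)\rho/2$ is exactly what keeps the ``good parameter'' sets nonempty at every scale down to $\partial B$. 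Now set $w(P)=u(P)$ at each vertex $P$ (an approximate continuity point of $u$), interpolate affinely on each triangle, arrange the outermost layer so that $w$ and $u$ share the same trace on $\partial B$, and put $w=u$ on $B_\rho\setminus\cup_\calF B$; this gives (iii). Since $w$ is continuous inside each $B$, matches the trace of $u$ on $\partial B$, and $\calH^1((J_u\cap B_{s\rho})\setminus\cup_\calF B)=\calH^1(J_u\cap\cup_\calF\partial B)=0$ by (ii), the pieces glue into an $SBD^p(B_\rho)$ function with $J_w\subset J_u$ up to $\calH^1$-null sets; in particular $\calH^1(J_w\cap B_{s\rho})=0$, so Korn's second inequality upgrades $w$ to $W^{1,p}(B_{s\rho};\Rdue)$, which is (iv).

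For the energy estimate \eqref{e:volume}, on a triangle $T$ with affine $w|_T$ and edge $\sigma$ of direction $\tau$ the identity $w(P_j)-w(P_i)=u(P_j)-u(P_i)$ combined with $\partial_\tau(u\cdot\tau)=\tau\cdot e(u)\tau$ along the good segment $\sigma$ yields $\tau\cdot e(w)|_T\,\tau=\tfrac1{\calH^1(\sigma)}\int_\sigma\tau\cdot e(u)\tau\,d\calH^1$; nondegeneracy of $\{\tau\otimes\tau\}$ then gives $|e(w)|_T|\le C\max_{\sigma\subset\partial T}\tfrac1{\calH^1(\sigma)}\int_\sigma|e(u)|\,d\calH^1$, and Jensen together with $\Ln(T)\le C\calH^1(\sigma)^2$ give $\int_T|e(w)|^p\le C\sum_{\sigma\subset\partial T}\calH^1(\sigma)\int_\sigma|e(u)|^p$; summing over $T$ (each segment lies in at most two triangles) and over dyadic layers and invoking (b) yields $\int_B|e(w)|^p\le C\int_B|e(u)|^p$. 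Then \eqref{e:korn} follows: apply Korn's first inequality to $w\in W^{1,p}(B_{s\rho};\Rdue)$ to get a skew-symmetric $A$ with $\int_{B_{s\rho}}|\nabla w-A|^p\le C\int_{B_{s\rho}}|e(w)|^p$, split the right side over $B_{s\rho}\setminus\cup_\calF B$ (where $e(w)=e(u)$) and over the balls (using \eqref{e:volume} and finite overlap) to bound it by $C\int_{B_\rho}|e(u)|^p$, and restrict the left side to $B_{s\rho}\setminus\cup_\calF B$, where $\nabla w=\nabla u$. Finally, if $u\in L^\infty$ then $w|_T$ is a convex combination of the vertex values $u(P)\in\overline{B_{\|u\|_\infty}(0)}$, whence $\|w\|_{L^\infty(B_\rho;\Rdue)}\le\|u\|_{L^\infty(B_\rho;\Rdue)}$, which is (vii).

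The genuine obstacle is the local construction of the second step: one must choose the refining triangulation so that, at every scale down to $\partial B$, three competing requirements hold at once --- the grid segments avoid the merely rectifiable set $J_u$ (needed for $w$ to be Sobolev and for the slice identities), the per-layer slice energies stay comparable to the corresponding bulk energies (needed for \eqref{e:volume}), and the outermost layer reproduces the trace of $u$ on $\partial B$ (needed to glue the pieces into a $W^{1,p}$ function across $\partial B$). Since only the scalar $\tau\cdot e(u)\tau$ is available along a segment, the aspect-ratio control of the triangulation is essential and must survive the refinement, and the universal threshold $\eta$ must be calibrated so that the good-parameter sets never become empty. This is the genuinely two-dimensional step, and it is carried out in detail in \cite{ContiFocardiIurlanoRepr}; everything else above is a combination of that construction with Besicovitch covering and Korn's inequality.
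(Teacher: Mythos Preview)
Your proposal is correct and follows exactly the approach the paper describes: the paper itself does not prove this proposition but only gives a two-sentence sketch (Besicovitch-type covering of $J_u$ by balls with properties (i)--(ii), then in each ball a piecewise-affine interpolation on a triangular grid refining towards $\partial B$ so that grid segments avoid $J_u$, following \cite{ContiSchweizer2}) and defers the details to the companion paper \cite{ContiFocardiIurlanoRepr}. Your sketch is a faithful expansion of that same outline and, like the paper, ultimately cites \cite{ContiFocardiIurlanoRepr} for the delicate grid construction.
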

The next result is an approximation in energy of $GSBD^p$ functions with $SBV^p$ functions, 
which was proven in \cite{Iurlano2014} for $p=2$ and for any dimension, building upon ideas 
developed in  \cite{Chambolle2004a,Chambolle2004b} for $SBD^2$ functions.
The extension to $p\ne 2$ {is discussed in details} elsewhere \cite{ContiFocardiIurlanoGBD}. 
Let us only mention that despite we still follow the ideas in \cite{Chambolle2004a,Chambolle2004b}, 
in the nonquadratic case a different definition of the piecewise affine approximants is needed. 
{Indeed, it requires the use of a different interpolation scheme and a different finite-element 
grid for the actual construction.} 
\begin{theorem}[{\cite[Theorem~3.1]{ContiFocardiIurlanoGBD}}]\label{flaviana}
 Let $\Omega\subset\R^n$ be a bounded Lipschitz set, 
 ${u\in GSBD^p(\Omega)}\cap {L^p(\Omega;\R^n)}$. 
 Then there is a sequence $v_j\in L^\infty\cap SBV^p(\Omega;\R^n)$ 
 such that 
 \begin{equation*}
  \lim_{j\to\infty}\big(\|e(v_j)-e(u)\|_{L^p(\Omega;\R^{n\times n})} +\|v_j-u\|_{L^p(\Omega;\R^n)} 
  +{\left|{\calH^{n-1}(J_{v_j})-\calH^{n-1}(J_u)}\right|} \big)=0\,.
 \end{equation*}
\end{theorem}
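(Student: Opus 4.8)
The plan is to follow the grid-based approximation of Chambolle \cite{Chambolle2004a,Chambolle2004b}, used for $p=2$ in \cite{Iurlano2014}, carried out in the non-Hilbertian setting; see \cite{ContiFocardiIurlanoGBD}. Since the approximants will be piecewise affine on a finite simplicial mesh (with a bounded rigid motion on a few exceptional cells), for each fixed mesh size they automatically lie in $SBV^p(\Omega;\Rn)\cap L^\infty$, so the content is the three convergences. I would first note that the bound $\liminf_j\Hn(J_{v_j})\ge\Hn(J_u)$ is for free: as soon as $v_j\to u$ in $L^p$ with $\sup_j\|e(v_j)\|_{L^p}<\infty$, the lower semicontinuity of $v\mapsto\Hn(J_v)$ along such sequences in $GSBD^p$ \cite{gbd} applies. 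Hence it suffices to produce $v_\eps$ ($\eps\downarrow0$) with $e(v_\eps)\to e(u)$, $v_\eps\to u$ in $L^p$, and $\limsup_\eps\Hn(J_{v_\eps})\le\Hn(J_u)$; for the last inequality I would work up to an error $\delta>0$ and let $\delta\to0$ at the end. If convenient one may first compose $u$ with a bounded $1$-Lipschitz retraction, using the truncation properties of $GSBD$ \cite{gbd}, and extend $u$ to a slightly larger Lipschitz set so that cells meeting $\partial\Omega$ need no separate treatment.

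\textbf{Construction.} Using rectifiability of $J_u$, fix $\delta>0$ and a Vitali-type family of pairwise disjoint balls $B_i\subset\subset\Omega$ covering $\Hn$-almost all of $J_u$, in each of which $J_u$ is contained in a $\delta r_i$-neighbourhood of a hyperplane through the centre with normal $\nu_i$ and $\Hn(J_u\cap B_i)\le(1+\delta)\omega_{n-1}r_i^{n-1}$; outside $\bigcup_i\overline{B_i}$ the jump set is $\Hn$-negligible. For small $\eps$ I would build $v_\eps$ cell by cell on a simplicial mesh of size $\eps$, rotated inside each $B_i$ so that one family of faces is orthogonal to $\nu_i$ (and oriented arbitrarily away from $\bigcup_i\overline{B_i}$), each cube subdivided into simplices by a fixed pattern. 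Call a cube $Q$ \emph{good} if $\Hn(J_u\cap\widehat Q)\le\theta\,\eps^{n-1}$, with $\widehat Q$ the union of $Q$ with its neighbours and $\theta$ a small universal threshold, and \emph{bad} otherwise; there are at most $C\theta^{-1}\eps^{-(n-1)}\Hn(J_u)$ bad cubes, they fill a slab of width $O(\eps)$ around $J_u$, and their union $B_\eps$ obeys $|B_\eps|\le C\theta^{-1}\eps\,\Hn(J_u)\to0$ and $\|e(u)\|_{L^p(B_\eps)}\to0$. On a good cube the crack is small, so a Korn--Poincar\'e-type estimate in the presence of a small crack (encoded in two dimensions by Proposition~\ref{prop:ricopr}, and obtained through the slicing in general) shows that $u|_Q$ is, up to a $W^{1,p}$ error, $L^p$-close to a rigid motion; reading the $GSBD^p$ hypothesis through the slicing definition, I assign to each grid vertex surrounded only by good cubes a nodal value equal to an average of $u$ over the adjacent good cells, chosen consistently inside each connected component of the cut good region, and let $v_\eps$ be the corresponding finite-element interpolant on the good simplices and the mean value of $u$ on each bad cube. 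Mismatches at the $\partial B_i$ are fixed by a thin transition layer, all approximants being $L^p$-close to $u$ there.

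\textbf{Estimates.} For the volume terms, on the good cubes the Korn--Poincar\'e bound together with the $W^{1,p}$ interpolation error of the finite-element scheme gives $\|e(v_\eps)-e(u)\|_{L^p(\Omega\setminus B_\eps)}+\|v_\eps-u\|_{L^p(\Omega\setminus B_\eps)}\to0$, while on $B_\eps$ everything vanishes since $|B_\eps|\to0$, $\|e(u)\|_{L^p(B_\eps)}\to0$ and, by Jensen, $\|v_\eps\|_{L^p(B_\eps)}\le\|u\|_{L^p(B_\eps)}\to0$. For the jump set, $J_{v_\eps}$ lies on grid faces that either bound a bad cube or separate two distinct components of the cut good region; away from $\bigcup_iB_i$ the latter faces are $\Hn$-negligible as $\eps\to0$, since the jump set is negligible there, whereas inside each $B_i$ the faces actually used to cut are the ones parallel to $\nu_i$ through the slab — essentially a single layer, of total area $\le(1+\delta)\Hn(J_u\cap B_i)+o(1)$ — together with the lateral faces of the slab, whose area is $O(\eps\,\mathcal H^{n-2}(\partial(J_u\cap B_i)))=o(1)$. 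Summing over $i$, $\limsup_\eps\Hn(J_{v_\eps})\le(1+\delta)\Hn(J_u)$, and a diagonal argument in $(\eps,\delta)$ closes the estimate.

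\textbf{Main obstacle.} The hard part is the passage from $p=2$ to general $p$: in the quadratic case the Hilbert structure is used both to choose the nodal interpolation and to control $\|e(v_\eps)-e(u)\|_{L^2}$, whereas for $p\ne2$ one must combine an $L^p$ Korn--Poincar\'e inequality on cracked cells with a finite-element interpolation whose \emph{symmetrised} gradient, not merely its full gradient, is stable in $L^p$; this is precisely what forces the use of a different interpolation scheme and a different finite-element grid than those of \cite{Chambolle2004a,Chambolle2004b,Iurlano2014}. The remaining points — legitimacy of the above operations in $GSBD^p$ via slicing, and the fact that the jump-set upper bound carries constant one — are handled as indicated.
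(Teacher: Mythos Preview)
The paper does not actually prove Theorem~\ref{flaviana}: it is quoted from the companion paper \cite{ContiFocardiIurlanoGBD}, and the only information given here is the short description preceding the statement --- namely that the argument follows the Chambolle grid construction \cite{Chambolle2004a,Chambolle2004b,Iurlano2014}, and that the passage to $p\neq 2$ requires a different interpolation scheme and a different finite-element grid. Your sketch is entirely consistent with that description, including your identification of the ``main obstacle'' (stability of the \emph{symmetrised} gradient of the interpolant in $L^p$), so there is nothing to compare beyond this.

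One caution on your sketch itself: the theorem is stated for all $n\ge 2$, but the Korn--Poincar\'e inequality on cracked cells that you invoke via Proposition~\ref{prop:ricopr} is proved in the paper only for $n=2$; your parenthetical ``obtained through the slicing in general'' is not a substitute for an $n$-dimensional argument, and indeed the actual construction in \cite{ContiFocardiIurlanoGBD} does not go through Proposition~\ref{prop:ricopr}. Also, the mesh you describe --- rotated inside each ball $B_i$ to align with $\nu_i$ and arbitrary outside --- is not a global simplicial complex, so you would need to say how the different grids are matched across the $\partial B_i$ without creating uncontrolled jump. These are the two places where your outline is genuinely incomplete rather than merely compressed.
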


\section{Proof of existence of strong minimizers}\label{s:dlb}
We prove that weak minimizers (in $GSBD^p$) have an essentially closed jump set,
and therefore can be identified with strong minimizers. The general strategy is similar to the
one by De Giorgi, Carriero, and Leaci \cite{DegiorgiCarrieroLeaci1989}; the key new ingredients
are the approximation results for $GSBD^p$ functions with Sobolev functions discussed in Section \ref{prelim}
{ and corresponding rigidity estimates for treating the lower-order term.}

\subsection{Density lower bound}
In this section we assume that {$\cost\ge0$}, $\beta>0$, $p>1$, $g\in L^\infty(\Omega;\R^{n})$, 
$\parametermu\geq 0$  are given, {and that $\Omega\subset\R^{n}$ is a bounded, open, Lipschitz set}. For all $u\in GSBD(\Omega)$ and all {Borel sets} $A\subset\Omega$ 
we define the functional
\begin{equation}\label{e:G} G(u,\cost,\beta,A):=\int_A f_\parametermu(e(u))dx +\cost\int_A|u-g|^pdx
+ 2\beta \calH^{n-1}(J_u\cap {A}).
\end{equation}
By \cite[Theorem 11.3]{gbd} the global minimum problem for $G$ has a solution in 
$GSBD(\Omega)$. 
{Moreover, we say that $u\in GSBD^p(\Omega)$ is a local minimizer of $G(\cdot,\cost,\beta,\Omega)$ provided
\begin{equation}\label{e:Glocmin}
 G(u,\cost,\beta,\Omega)\leq  G(v,\cost,\beta,\Omega),
\end{equation}
for all $v\in GSBD^p(\Omega)$ satisfying $\{v\neq u\}\subset\subset\Omega$.}

In order to prove the main result of the paper, Theorem \ref{theootherp}, 
we use an homogeneous version of $G$   
\[ 
G_0(u,\cost,\beta,A):=\int_A f_0(e(u))dx +\cost\int_A|u|^pdx+ 
2\beta \calH^{n-1}(J_u\cap {A})
\]
to get an appropriate decay estimate (Lemma \ref{lemmadecay}) and then density lower bounds 
{for the full energy $G_0$ and the jump energy alone (cf. Lemma~\ref{l:lowbound} and 
Corollary~\ref{c:dlbjump} respectively)}.
For convenience, we introduce {for open sets $A\subset\Omega$} 
the deviation from minimality 
\[
\Psi_0(u,\cost,\beta,A):=G_0(u,\cost,\beta,A)-\Phi_0(u,\cost,\beta,A),
\] 
where 
\begin{equation}\label{e:phi0}
\Phi_0(u,\cost,\beta,A):=\inf\{G_0(v,\cost,\beta,A):\ v\in GSBD(\Omega), \ \{v\ne u\}\subset\hskip-0.125cm\subset A\}.
\end{equation}
The functions $f_\parametermu$ with $\parametermu>0$ and $\fz$ are both convex and with $p$-growth.
{The $p$-homogeneous function $\fz$ captures the asymptotic behavior of $f_\parametermu$ at infinity,}
\begin{equation}
 \fz(\xi)=\lim_{t\to\infty} \frac{{f_\parametermu}(t\xi)}{t^p}=\frac1p (\C\xi\cdot \xi)^{p/2}\,.
\end{equation}

{Before proceeding with the proofs we state an auxiliary result that will be repeatedly used 
in what follows (see \cite[Lemma~4.3]{ContiFocardiIurlano2016} for the elementary proof).
\begin{lemma}\label{affine}
Let $\omega\subseteq B_r(y)$ satisfy
\[
\calL^n(\omega)\leq\frac14 \calL^n(B_r(y)),
\]
and let $\varphi:\R^n\to\R^n$ be an affine function. Then
\[
\calL^n(B_r(y))\|\varphi\|_{L^\infty(B_r(y),\Rn)}\leq \bar{c}\|\varphi\|_{L^1(B_r(y)\setminus\omega,\Rn)},
\]
where the constant $\bar{c}$ depends only on the dimension $n$.
\end{lemma}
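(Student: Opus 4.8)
The plan is to exploit the finite-dimensionality of the space of affine maps $\R^n\to\R^n$. On such a fixed finite-dimensional space all norms are equivalent, so it suffices to compare the $L^\infty(B_r(y);\R^n)$ norm with the $L^1(B_r(y)\setminus\omega;\R^n)$ norm by a scaling and compactness argument. First I would reduce to the unit ball $B_1(0)$ by the change of variables $x = y + rz$; writing $\varphi(x) = M(x-y) + b = M r z + b =: \psi(z)$, one checks that $\|\varphi\|_{L^\infty(B_r(y))} = \|\psi\|_{L^\infty(B_1(0))}$, that $\calL^n(B_r(y)) = r^n \calL^n(B_1(0))$, and that $\|\varphi\|_{L^1(B_r(y)\setminus\omega)} = r^n \|\psi\|_{L^1(B_1(0)\setminus\tilde\omega)}$ where $\tilde\omega := \{z : y+rz\in\omega\}$ still satisfies $\calL^n(\tilde\omega)\le \tfrac14\calL^n(B_1(0))$. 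Hence the claimed inequality is equivalent to
\[
\|\psi\|_{L^\infty(B_1(0))} \le \bar c \,\|\psi\|_{L^1(B_1(0)\setminus\tilde\omega)}
\]
for every affine $\psi$ and every measurable $\tilde\omega\subset B_1(0)$ with $\calL^n(\tilde\omega)\le\tfrac14\calL^n(B_1(0))$, with $\bar c$ depending only on $n$.

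To prove this reduced statement I would argue by contradiction and compactness. Suppose no such $\bar c$ exists; then there are affine maps $\psi_k$ with $\|\psi_k\|_{L^\infty(B_1(0))}=1$ and sets $\tilde\omega_k$ as above with $\|\psi_k\|_{L^1(B_1(0)\setminus\tilde\omega_k)}\to 0$. Writing $\psi_k(z)=M_k z + b_k$, the normalization bounds $|M_k|+|b_k|$ uniformly, so up to a subsequence $M_k\to M$, $b_k\to b$, and $\psi_k\to\psi$ uniformly on $B_1(0)$ with $\|\psi\|_{L^\infty(B_1(0))}=1$. Since $\calL^n(B_1(0)\setminus\tilde\omega_k)\ge\tfrac34\calL^n(B_1(0))$, for any fixed measurable set $E\subset B_1(0)$ with $\calL^n(E)<\tfrac34\calL^n(B_1(0))$ we have $\calL^n(E\cap(B_1(0)\setminus\tilde\omega_k)) \ge \calL^n(B_1(0)\setminus\tilde\omega_k) - \calL^n(B_1(0)\setminus E) \ge \tfrac34\calL^n(B_1(0)) - \calL^n(B_1(0)\setminus E)>0$ is bounded below; hence $\int_E |\psi|\,dz = \lim_k \int_{E\cap(B_1\setminus\tilde\omega_k)}|\psi_k|\,dz \le \liminf_k \|\psi_k\|_{L^1(B_1\setminus\tilde\omega_k)}=0$ plus an error controlled by $\|\psi_k-\psi\|_{L^\infty}\calL^n(E)\to 0$. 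Thus $\psi\equiv 0$ a.e.\ on every such $E$, hence a.e.\ on $B_1(0)$, contradicting $\|\psi\|_{L^\infty(B_1(0))}=1$. This gives the constant $\bar c=\bar c(n)$, and undoing the rescaling yields the lemma.

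I do not expect any serious obstacle here; the only point requiring a little care is making the set-theoretic estimate work uniformly, i.e.\ ensuring that the complements $B_1(0)\setminus\tilde\omega_k$ — which vary with $k$ — still see a definite fraction of any fixed positive-measure test set $E$, which is guaranteed by the hypothesis $\calL^n(\tilde\omega_k)\le\tfrac14\calL^n(B_1(0))$ with the slack $\tfrac14<\tfrac12$. Alternatively, one could avoid compactness entirely and give a direct quantitative proof: an affine map on a ball is determined by finitely many moments, and $\|\psi\|_{L^1(B_1\setminus\tilde\omega)}\ge \|\psi\|_{L^1(B_1)} - \|\psi\|_{L^\infty(B_1)}\calL^n(\tilde\omega)$, so once one knows the reverse inequality $\|\psi\|_{L^\infty(B_1)}\le C_n\|\psi\|_{L^1(B_1)}$ (itself a norm-equivalence on the affine space) together with a careful choice showing $\calL^n(\tilde\omega)\le\tfrac14\calL^n(B_1)$ cannot absorb more than a fixed fraction of $\|\psi\|_{L^1(B_1)}$ — which again needs that the ``mass'' of an affine map cannot concentrate on a set of measure $\tfrac14$ of the ball — one concludes. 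Either route gives a dimensional constant, and the rescaling step is purely bookkeeping.
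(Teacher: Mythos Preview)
The paper does not give its own proof of this lemma; it only cites \cite[Lemma~4.3]{ContiFocardiIurlano2016} and calls the proof ``elementary''. So there is no in-paper argument to compare against. Your overall strategy---rescale to the unit ball and use finite-dimensionality of affine maps---is exactly the right one and is presumably what the cited reference does.

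That said, your compactness argument as written has a genuine gap. The displayed equality
\[
\int_E |\psi|\,dz \;=\; \lim_k \int_{E\cap(B_1\setminus\tilde\omega_k)}|\psi_k|\,dz
\]
is not correct: the domains $E\cap(B_1\setminus\tilde\omega_k)$ vary with $k$ and need not exhaust $E$, so even after adding the uniform-convergence error you only obtain $\int_{E\cap(B_1\setminus\tilde\omega_k)}|\psi|\to 0$, not $\int_E|\psi|=0$. The lower bound on $\calL^n(E\cap(B_1\setminus\tilde\omega_k))$ you carefully derive is not actually used in the inequality chain. (Also, the condition you want on $E$ is $\calL^n(E)>\tfrac14\calL^n(B_1)$, equivalently $\calL^n(B_1\setminus E)<\tfrac34\calL^n(B_1)$; the stated condition $\calL^n(E)<\tfrac34\calL^n(B_1)$ does not make the lower bound positive.)

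The fix is short: from uniform convergence one gets $\int_{B_1\setminus\tilde\omega_k}|\psi|\to 0$ directly. Since $\psi$ is affine and $\|\psi\|_{L^\infty(B_1)}=1$, the set $\{|\psi|=0\}$ has measure zero, so for some $\delta_0>0$ one has $\calL^n(\{|\psi|\ge\delta_0\})>\tfrac14\calL^n(B_1)$. Then
\[
\int_{B_1\setminus\tilde\omega_k}|\psi|\;\ge\;\delta_0\,\calL^n\big((B_1\setminus\tilde\omega_k)\cap\{|\psi|\ge\delta_0\}\big)\;\ge\;\delta_0\big(\calL^n(\{|\psi|\ge\delta_0\})-\tfrac14\calL^n(B_1)\big)>0
\]
uniformly in $k$, a contradiction. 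Alternatively, your second route is cleaner and avoids the issue altogether: with $C_n$ the norm-equivalence constant giving $\|\psi\|_{L^\infty(B_1)}\le C_n\|\psi\|_{L^1(B_1)}$, one needs only observe that an affine map cannot concentrate more than a fixed fraction of its $L^1$ mass on a set of measure $\tfrac14\calL^n(B_1)$, which follows from the same level-set estimate.
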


We investigate first the compactness properties of sequences having vanishing jump energy.}
\def\fraction{\sfrac 12}
\begin{proposition}\label{propcomp}
Let $n=2$, $p\in(1,\infty)$, 
 $B_\rho\subset\R^2$ a ball, $u_h\in {SBD^p}(B_\rho)$ and
 \begin{equation}\label{e:uhhp}
 \sup_h \int_{B_\rho} \fz(e(u_h))dx<\infty, \qquad \calH^1(J_{u_h})\to0.
 \end{equation}
{Then there are a function $u\in W^{1,p}(B_\rho;\R^2)$, a subsequence $h_j$, a sequence of affine functions $a_j:\R^2\to\R^2$ with $e(a_j)=0$, a sequence $z_j\in SBD^p(B_\rho)$ with 
\begin{enumerate}
	\item $\{z_j\ne u_{h_j}\}\subset\hskip-0.125cm\subset B_\rho$ and $\calL^2(\{z_j\ne u_{h_j}\})\to0$;
	\item $|z_j-a_j|\leq |u_{h_j}-a_j|$ and $|e(z_j)|\leq |e(u_{h_j})|$ 
	$\calL^2$-a.e. {on $B_\rho$};
	\item $\calH^1(J_{z_j}\setminus B_{\rho'})\leq c \calH^1(J_{u_{h_j}}\setminus B_{\rho''})$ for every $\rho''<\rho'<\rho$ and $j$ large, where $c$ is a universal constant;
	\item $z_j-a_j\to u$ $L^p_{\mathrm{loc}}(B_\rho;\R^2)$.
\end{enumerate}
	Moreover $u_{h_j}-a_j\to u$ $\calL^2$-a.e. {on $B_\rho$} and}
 \begin{equation}\label{e:tesiuh}
  \int_{B_\rho} \fz(e(u))dx \le {\liminf_{h\to\infty} \int_{B_\rho} \fz(e(u_{h}))dx\,.}
 \end{equation}
 \end{proposition}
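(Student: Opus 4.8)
The plan is to build the approximating sequence $z_j$ by applying Proposition~\ref{prop:ricopr} to each $u_h$ and then extract compactness from the Sobolev replacements. First I would note that since $\calH^1(J_{u_h})\to 0$, for $h$ large the hypothesis of Proposition~\ref{prop:ricopr} is satisfied (with $s$ fixed, say $s=\tfrac{1}{2}$, or more carefully letting $s\to 1$ along a diagonal argument to recover the local statements on every $B_{\rho'}\subset\subset B_\rho$). This produces, for each large $h$, a family $\calF_h$ of balls and a field $w_h\in SBD^p(B_\rho)$ with $w_h\in W^{1,p}(B_{s\rho};\R^2)$, $w_h=u_h$ outside $\cup_{\calF_h}B$, $\int_B|e(w_h)|^p\le c\int_B|e(u_h)|^p$, and a skew-symmetric matrix $A_h$ (from the Korn-type estimate \eqref{e:korn}) controlling $\nabla u_h-A_h$ in $L^p$ away from the bad balls. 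The candidate for $z_j$ is essentially $w_{h_j}$, possibly further truncated in modulus relative to $a_j$ to enforce (ii); property (i) follows from $\sum_{\calF_h}\calL^2(B)\le \tfrac{c}{\eta}\rho\,\calH^1(J_{u_h}\cap B_\rho)\to 0$, property (iii) from item (ii)+(iv) of Proposition~\ref{prop:ricopr} and the localization of the balls, and property (iv)/(v) constraints on $z_j$ are read off from \eqref{e:volume} and \eqref{e:korn}.

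Next I would produce the limit $u$ and the affine functions $a_j$. The natural choice is $a_j(x):=A_j x + b_j$ where $A_j$ is the skew matrix from \eqref{e:korn} applied to $u_{h_j}$ and $b_j$ is a mean-value/Poincaré normalization chosen so that $w_{h_j}-a_j$ has zero average (or zero average over $B_{\rho'}\setminus\cup B$). Then \eqref{e:korn} gives a uniform bound on $\|\nabla(w_{h_j}-a_j)\|_{L^p(B_{s\rho}\setminus\cup B)}$, and combined with \eqref{e:volume} and the overlap bound $\xi$ one gets a uniform bound on $\|e(w_{h_j})\|_{L^p(B_{s\rho})}$; Korn's inequality in $W^{1,p}$ on smooth subdomains (after subtracting a further skew part, which can be absorbed into $A_j$) together with the normalization gives a uniform $W^{1,p}_{\mathrm{loc}}$ bound on $w_{h_j}-a_j$. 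By Rellich this sequence converges (up to a subsequence) strongly in $L^p_{\mathrm{loc}}$ and weakly in $W^{1,p}_{\mathrm{loc}}$ to some $u\in W^{1,p}(B_\rho;\R^2)$, which is property (iv). To get $u_{h_j}-a_j\to u$ a.e., one observes $\calL^2(\{w_{h_j}\ne u_{h_j}\})=\calL^2(\cup_{\calF_{h_j}}B)\to 0$, so the two sequences have the same a.e. limit along a further subsequence.

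The lower semicontinuity inequality \eqref{e:tesiuh} is then obtained as follows. Because $\fz$ is convex with $p$-growth, $v\mapsto\int_{B_\rho}\fz(e(v))dx$ is weakly lower semicontinuous on $W^{1,p}$; since $e(a_j)=0$ we have $e(w_{h_j}-a_j)=e(w_{h_j})\rightharpoonup e(u)$ weakly in $L^p_{\mathrm{loc}}$, so for every $\rho'<\rho$
\[
\int_{B_{\rho'}}\fz(e(u))dx\le\liminf_j\int_{B_{\rho'}}\fz(e(w_{h_j}))dx\le\liminf_j c\int_{B_{\rho}}\fz(e(u_{h_j}))dx,
\]
where the last step uses \eqref{e:volume} ball-by-ball plus finite overlap. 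This has an unwanted constant $c$ and is over $B_{\rho'}$; to remove both, I would instead work directly with $\fz(e(u_{h_j}))$ on the good set: on $B_\rho\setminus\cup_{\calF_{h_j}}B$ one has $w_{h_j}=u_{h_j}$, so $\int_{B_\rho\setminus\cup B}\fz(e(u_{h_j}))=\int_{B_\rho\setminus\cup B}\fz(e(w_{h_j}))$, and since $\calL^2(\cup B)\to 0$ with $e(w_{h_j})$ equiintegrable (uniform $L^p$ bound), the contribution of $\cup B$ to $\int_{B_\rho}\fz(e(w_{h_j}))$ is negligible in the $\liminf$; letting $\rho'\to\rho$ and using monotone convergence on the left then yields \eqref{e:tesiuh} with the sharp constant $1$. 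The main obstacle I anticipate is precisely this bookkeeping: combining the Korn/Poincaré normalization so that the affine functions $a_j$ are chosen consistently for all scales $\rho'$, controlling the bad balls uniformly as $s\to 1$, and ensuring the extra constant $c$ from \eqref{e:volume} does not survive into the final semicontinuity estimate — this requires the equiintegrability argument on the complement of $\cup_{\calF}B$ rather than a naive weak lower semicontinuity on all of $B_{\rho'}$.
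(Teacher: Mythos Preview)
Your overall architecture---apply Proposition~\ref{prop:ricopr} at scale $s$, normalize by a rigid motion via Korn--Poincar\'e, extract weak $W^{1,p}$ compactness, then run a diagonal in $s\uparrow 1$---matches the paper. The gap is in the construction of $z_j$ and, as a consequence, in the sharp lower semicontinuity step. Taking $z_j=w_{h_j}$ does \emph{not} give item (ii): on each bad ball $B\in\calF_h$ you only have the integral bound \eqref{e:volume}, not the pointwise bounds $|z_j-a_j|\le |u_{h_j}-a_j|$ and $|e(z_j)|\le |e(u_{h_j})|$. Your proposed fix, ``truncation in modulus relative to $a_j$'', is not a well-defined operation on vector-valued $SBD^p$ functions (it destroys the symmetric-gradient structure and may create uncontrolled jump set), so this does not close the gap.

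The same issue breaks your route to the sharp constant in \eqref{e:tesiuh}. You argue that the contribution of $\cup_{\calF_h}B$ to $\int \fz(e(w_h))$ vanishes by ``equiintegrability (uniform $L^p$ bound)''. A uniform $L^p$ bound on $e(w_h)$ gives equiintegrability of $|e(w_h)|$, not of $|e(w_h)|^p\sim \fz(e(w_h))$; without extra integrability the bad-set contribution need not tend to zero, and the unwanted constant $c$ from \eqref{e:volume} survives. The paper's remedy is a single stroke that fixes both problems: define
\[
z_h^{(s)}:=w_h^{(s)}+(a_h^{(1/2)}-w_h^{(s)})\,\chi_{\cup_{\calF_h^s}B},
\]
i.e.\ replace $w_h^{(s)}$ by the rigid motion $a_h^{(1/2)}$ on the bad balls. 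Then $z_h^{(s)}=u_h$ on the good set and $z_h^{(s)}=a_h^{(1/2)}$ on the bad set, so $e(z_h^{(s)})=e(u_h)\chi_{B_\rho\setminus\cup B}$ and $z_h^{(s)}-a_h^{(1/2)}=0$ on $\cup B$. Item (ii) is now trivially true, the new jumps lie only on $\cup\partial B$ (which is controlled for (iii)), and since $\fz(e(z_h^{(s)}))=\fz(e(u_h))\chi_{B_\rho\setminus\cup B}\le \fz(e(u_h))$ pointwise, weak lower semicontinuity on $B_{s\rho}$ gives \eqref{e:tesiuh} with constant $1$ immediately---no equiintegrability needed.
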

\begin{proof}
Up to the extraction of a subsequence, we may assume that the inferior limit in {\eqref{e:tesiuh}}
is actually a limit.

For {each $h\in\N$ and for} any $s\in[\fraction,1)$ let $w_h^{(s)}\in SBD^p(B_\rho)$ and 
$\calF^s_h$ be the 
{function} and the family of balls obtained {by} Proposition~\ref{prop:ricopr} 
{applied to $u_h$}. By \eqref{e:volume} and Korn's inequality we can choose affine 
functions $a_h^{(s)}:\R^2\to\R^2$ such that $e(a_h^{(s)})=0$,
 \begin{equation*}
 \|Dw_h^{(s)}-Da_h^{(s)}\|_{L^p(B_{s\rho};\R^{2{\times}2})}\le c 
 \|e(w_h^{(s)})\|_{L^p(B_{s\rho};\R^{2{\times}2})}\le c 
 \|e(u_h)\|_{L^p(B_{\rho};\R^{2{\times}2})}
 \end{equation*}
 and $\|w_h^{(s)}-a_h^{(s)}\|_{{L^p}(B_{s\rho};\R^2)}\le  c {\rho}
 \|e(u_h)\|_{L^p(B_{\rho};\R^{2{\times}2})}$.
Now notice that 
{for $h$ large $\calL^2(B_{{\sfrac\rho 2}}\cap\{w_h^{(s)}={w^{(\fraction)}_h}=u_h\})
\geq\frac 14\calL^2(B_\rho)$ in view of item (vi) in Proposition~\ref{prop:ricopr}
and since $\calH^1(J_{u_h})\to0$ as $h\uparrow\infty$ (cf. \eqref{e:uhhp}). 
Thus, Lemma~\ref{affine}} and the triangular inequality imply for $h$ large
\begin{align}\label{affineauahfr}
 & \|Da_h^{(s)}-Da_h^{(\fraction)} \|_{L^p(B_{s\rho};\R^{2{\times}2})} \nonumber\\
 & \le c \|Da_h^{(s)}-Da_h^{(\fraction)}\|_{L^p(B_{{\sfrac\rho 2}}\cap
 \{w_h^{(s)}={w^{(\fraction)}_h}=u_h\};\R^{2{\times}2})} \le c 
 \|e(u_h)\|_{L^p(B_{\rho};\R^{2{\times}2})}
 \end{align}
 and similarly $\|a_h^{(\fraction)}-a_h^{(s)}\|_{{L^p}(B_{s\rho};\R^2)}\le  c {\rho}
 \|e(u_h)\|_{L^p(B_{\rho};\R^{2{\times}2})}$.
 
It follows that the sequence $w_h^{(s)}-a_h^{(\fraction)}$ is bounded in $W^{1,p}(B_{s\rho};\R^2)$ 
and therefore has a subsequence {(depending on $s$ {and not relabeled})} which converges to some 
$w^{(s)}$ weakly in $W^{1,p}(B_{s\rho};\R^2)$, strongly in $L^{q}(B_{s\rho};\R^2)$ for all 
$q\in[1,p^\ast)$ and pointwise $\calL^2$-a.e. on $B_{s\rho}$.
{Note that $\calL^2(\cup_{\calF^s_h}B)\leq \frac c\eta\rho\calH^1(J_{u_h})$ 
for all $s\in[\fraction,1)$ by item (vi) in Proposition~\ref{prop:ricopr}. Therefore, 
by \eqref{e:uhhp} we conclude that $w^{(s)}=w^{(t)}$ $\calL^2$-a.e. on $B_{s\rho}$ if 
$\fraction\leq s\leq t<1$. Thus, we may define a limit function $u$ on $B_\rho$ such 
that $u=w^{(s)}$ $\calL^2$-a.e. on $B_{s\rho}$ for all $s\in[\fraction,1)$.
In particular, $u\in W^{1,p}_{\loc}(B_\rho;\R^2)$.}

Let $B'\in \calF^s_h$. By the trace theorem,   
\begin{multline*}
\|w_h^{(s)}-a_h^{(\fraction)}\|_{L^1(\partial B';\R^2)} \\\le 
\frac{c}{r_{B'}} \|w_h^{(s)}-a_h^{(\fraction)}\|_{L^1(B';\R^2)} +
c\|Dw_h^{(s)}-Da_h^{(\fraction)}\|_{L^1( B';\R^{2{\times}2})} .
\end{multline*}
Using Korn's inequality, Poincar\'e's inequality,
	and \eqref{e:volume}, 
	we obtain that for each $B'$ there is an affine function $a_{B'}$ such that
	\begin{equation*}
	\frac{1} {r_{B'}} \|w_h^{(s)}-a_{B'}\|_{L^p(B';\R^2)} +
	\|Dw_h^{(s)}-Da_{B'}\|_{L^p(B';\R^{2{\times}2})} \le 
	c \|e(u_h)\|_{L^p(B';\R^{2{\times}2})} \,.
	\end{equation*}
	Since the center of $B'$ is contained in $\overline B_{s\rho}$, we have
	$\calL^2(B'\cap B_{s\rho}) \ge c \calL^2(B')$, and therefore, treating the affine function as in \eqref{affineauahfr},
	\begin{multline*}
	\|w_h^{(s)}-a_h^{(\fraction)}\|_{L^p(B';\R^2)}
		\le 
	\|w_h^{(s)}-a_{B'}\|_{L^p(B';\R^2)} 
	+c \|a_{B'}-a_h^{(\fraction)}\|_{L^p(B'\cap B_{s\rho};\R^2)}\\
	\le 
	c\|w_h^{(s)}-a_{B'}\|_{L^p(B';\R^2)} 
	+ c\|w_h^{(s)}-a_h^{(\fraction)}\|_{L^p(B'\cap B_{s\rho};\R^2)}.
	\end{multline*}
	The same holds for $Dw_h^{(s)}-Da_h^{(\fraction)}$. We conclude
	\begin{multline*}
	\sum_{B'\in\calF^s_h} \frac{1} {r_{B'}} \|w_h^{(s)}-a_h^{(\fraction)}\|_{L^p(B';\R^2)} +
	\|Dw_h^{(s)}-Da_h^{(\fraction)}\|_{L^p(B';\R^{2{\times}2})}\\
	\le c \|e(u_h)\|_{L^p(B_\rho;\R^{2{\times}2})}
	\end{multline*}
	and therefore, since $r_{B'}\le \rho$,
	\begin{align*}
	\sum_{B'\in\calF^s_h} \|w_h^{(s)}-a_h^{(\fraction)}\|_{L^1(\partial B';\R^2)}  &
	\le c \rho^{2-\sfrac2p}\|e(u_h)\|_{L^p(B_\rho;\R^{2{\times}2})}.
	\end{align*}
We define $z_h^{(s)}:=w_h^{(s)} + (a_h^{(\fraction)}-w_h^{(s)}) \chi_{\cup_{\calF^s_h}B}$.
The previous estimates show that {$z_h^{(s)}-a_h^{(\fraction)}\in SBD^p(B_{\rho})$.}
Moreover, $\calH^1(J_{z_h^{(s)}}{\cap B_{s\rho}})\leq\sum_{B\in \calF^s_h} \calH^1(\partial B)$,
{$e(z_h^{(s)})=e(u_h)\chi_{B_{\rho}\setminus \cup_{\calF^s_h}B}$} $\calL^2$-a.e., 
so that the sequence $z_h^{(s)}-a_h^{(\fraction)}$ is bounded in $SBD^p(B_{s\rho})$. 
In addition, for all $q\in(p,p^\ast)$ it holds
\begin{multline*}
\|z_h^{(s)}-w_h^{(s)}\|_{L^p(B_{s\rho};\R^2)}= \|a_h^{(\fraction)}-w_h^{(s)}\|_{L^p({B_{s\rho}\cap}\cup_{\calF^s_h}B;\R^2)}
\\
\leq \big(\calL^2(\cup_{\calF^s_h}B)\big)^{\sfrac1p-\sfrac 1q}\|a_h^{(\fraction)}-w_h^{(s)}\|_{L^q({B_{s\rho}};\R^2)}.
\end{multline*}
Hence, $z_h^{(s)}-w_h^{(s)}\to 0$ in $L^p(B_{s\rho};\R^2)$ since $\calL^2(\cup_{\calF^s_h}B)\leq c(\calH^1(J_{u_h}))^2$
for some universal constant $c>0$.
Therefore, $z_h^{(s)}-a_h^{(\fraction)}$ has a subsequence 
{(depending on $s$)} converging $L^p(B_{s\rho};\R^2)$ and $\calL^2$-a.e. on 
$B_{s\rho}$ to {$u$} . 

{From 
	{$e(z_h^{(s)})=e(w_h^s)\chi_{B_{\rho}\setminus \cup\calF^s_h}$}
	one sees that 
	$e(z_h^{(s)})$ 
	converges weakly in $L^p$ to {$e(u)$} .}
{{Hence, recalling that we have assumed 
	the inferior limit in {\eqref{e:tesiuh}} to be a limit,} by convexity 
	{and positivity} of $\fz$ we obtain {for all $s\in[\fraction,1)$}
	\begin{equation}\label{e:tesiuhs}
	\int_{B_{s\rho}} \fz({e(u)}) dx \le \liminf_{h\to\infty} \int_{B_{s\rho}} \fz(e(z^s_h))dx
	\le \liminf_{h\to\infty} \int_{B_{s\rho}} \fz(e(u_h))dx,
	\end{equation}
	where in the second step we used $e(z_h^s)=e(u_h)\chi_{B_{s\rho}\setminus \cup \calF_h^s}$.
}
{From this we conclude that} $u\in W^{1,p}(B_\rho;\R^2)$,  
{and moreover the lower semicontinuity estimate in} \eqref{e:tesiuh} follows at once 
{being $f_0$ nonnegative}. 

{Eventually,} take a sequence $s_j\uparrow 1$,  
for every $j\in\N$ let $h_j$ be such that 
\[
\|z^{(s_j)}_{h_j}-a_{h_j}^{\sfrac 12}-{u}\|_{L^p(B_{s_j\,\rho};\R^2)}\leq \sfrac 1j,
\]
set $z_j:=z^{(s_j)}_{h_j}$ {and $a_j:=a_{h_j}^{{(\sfrac 12)}}$}, then {properties (i)-(iv) follow by construction}. 
 
{Finally, the sequence $u_{h_j}-a_j$ converges in measure to $u$ by item (iii) in Proposition~\ref{prop:ricopr}
and since $\calL^2(\cup_{\calF^{s_j}_{h_j}}B)$ is infinitesimal as already noticed.}
\end{proof}

\begin{remark}
 The result above extends to sequences $u_h\in GSBD^p(B_\rho)\cap L^p(B_\rho;\R^2)$ 
 by using the approximation argument that will be employed in Proposition~\ref{propconvenerg}
 below.
\end{remark}

{We investigate next the asymptotics of sequences with vanishing jump energy.}
\begin{proposition}\label{propconvenerg}
Let $n=2$, $p\in(1,\infty)$.
Let $B_r$ be a ball, $u_h\in GSBD^p(B_r)$ and {$\cost_h\in[0,\infty)$,} $\beta_h\in(0,\infty)$ be two sequences 
with $\cost_h\to 0$ as $h\to\infty$, and such that
 \begin{equation*}
  \sup_h G_0(u_h,\cost_h,\beta_h, B_r)<\infty,\text{ and  } \lim_{h\to\infty} \Psi_0(u_h,\cost_h,\beta_h, B_r)=
  \lim_{h\to\infty} \calH^1(J_{u_h})=0\,.
 \end{equation*}
Then there exists $u\in W^{1,p}(B_r;\R^2)$, $\overline{a}:\R^2\to\R^2$ affine with $e(\overline{a})=0$
and a subsequence $h_j$
such that
\begin{enumerate}
 
 \item\label{propconvenerg-itrho}  for all $\rho\in (0,r)$
 \[ 
 \lim_{j\to\infty} G_0(u_{h_j},\cost_{h_j},\beta_{h_j},B_\rho)=\int_{B_\rho} \fz(e(u)) dx+
 \int_{B_\rho}|\overline{a}|^pdx;
 \]
 
 \item\label{propconvenerg-itmin} for all $v\in u+W^{1,p}_0(B_r;\R^2)$ 
\[
\int_{B_r} \fz(e(u))dx\le \int_{B_r} \fz(e(v))dx;
\]

\item \label{propconvenerg-remark}
$u_{h_j}-a_{j}\to u$ pointwise $\calL^2$-a.e. on $B_r$ for some affine functions $a_j$, 
{$e(u_{h_j})\to e(u)$ in $L^p(B_\rho;\R^{2\times 2})$, ${\beta_{h_j}\calH^1(J_{u_{h_j}}\cap B_\rho)}\to 0$, 
and ${\cost_{h_j}^{\sfrac 1p}u_{h_j}}\to\overline{a}$ in $L^p(B_\rho;\R^2)$ for all $\rho\in(0,r)$.}
\end{enumerate}
\end{proposition}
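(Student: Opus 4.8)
The plan is to combine the compactness statement of Proposition~\ref{propcomp} --- which yields the limit $u$, the rigid motions $a_j$, and the weak and pointwise convergences of $u_{h_j}$ and $e(u_{h_j})$ --- with the vanishing of the deviation from minimality $\Psi_0$, which turns the bulk lower‑semicontinuity inequality into an equality and forces $u$ to minimise $v\mapsto\int_{B_r}\fz(e(v))$ among $v\in u+W^{1,p}_0(B_r;\Rdue)$; the lower order term is kept under control by systematically ``recentering'' every competitor at $a_j$, so that the fidelity contribution is governed by a limiting rigid motion $\bar a$ coming from the bound $\sup_h\cost_h\int_{B_r}|u_h|^p<\infty$. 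First, from $\sup_hG_0(u_h,\cost_h,\beta_h,B_r)<\infty$ one gets that $e(u_h)$ is bounded in $L^p(B_r)$, $\cost_h^{\sfrac1p}u_h$ is bounded in $L^p(B_r;\Rdue)$ and $\calH^1(J_{u_h})\to0$. Applying Theorem~\ref{flaviana} to each $u_h$ and diagonalising over the approximation index, I replace $u_h$ by $\tilde u_h\in SBV^p\cap L^\infty(B_r;\Rdue)\subset SBD^p(B_r)$ differing from it by an arbitrarily small amount in $\|\cdot\|_{L^p}$, in $\|e(\cdot)\|_{L^p}$ and in $\calH^1(J_\cdot)$; using the $p$‑growth of $\fz$, the bound $\cost_h^{\sfrac1p}\|u_h\|_{L^p}^{(p-1)/p}\le C^{(p-1)/p}$ and the elementary estimate $\fz(\xi+\zeta)\le(1+\eps)\fz(\xi)+C_\eps\fz(\zeta)$ (to perturb competitors by $w\mapsto w+(u_h-\tilde u_h)$), one checks that $G_0(\cdot,\cost_h,\beta_h,A)$ varies infinitesimally uniformly in the Borel set $A$, that $\Psi_0(\tilde u_h,\cost_h,\beta_h,B_r)\to0$, and that every assertion to be proven is stable under $L^p$ --- hence, along a subsequence, pointwise a.e.\ --- perturbations of the functions; so I may assume $u_h\in SBD^p(B_r)$. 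Proposition~\ref{propcomp} then provides a subsequence $h_j$, rigid motions $a_j$, functions $z_j\in SBD^p(B_r)$ and $u\in W^{1,p}(B_r;\Rdue)$ with the stated properties, in particular $u_{h_j}-a_j\to u$ and $z_j-a_j\to u$ (the latter in $L^p_\loc$), $e(u_{h_j})\rightharpoonup e(u)$ weakly in $L^p_\loc(B_r)$, and $\int_{B_\rho}\fz(e(u))\le\liminf_j\int_{B_\rho}\fz(e(u_{h_j}))$ for all $\rho<r$.

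\emph{The rigid motion and the liminf inequality.} Writing $\cost_{h_j}^{\sfrac1p}u_{h_j}=\cost_{h_j}^{\sfrac1p}(u_{h_j}-a_j)+\cost_{h_j}^{\sfrac1p}a_j$, the first summand tends to $0$ in $L^p_\loc(B_r)$ (as $\cost_{h_j}\to0$ and $u_{h_j}-a_j\to u$), while the sum is bounded in $L^p(B_r;\Rdue)$; hence $\cost_{h_j}^{\sfrac1p}a_j$ is bounded in $L^p_\loc(B_r;\Rdue)$, and since its linear part is skew‑symmetric, up to a subsequence and a diagonalisation over an exhaustion of $B_r$ it converges in $L^p_\loc(B_r;\Rdue)$ to an affine map $\bar a$ with $e(\bar a)=0$. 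Thus $\cost_{h_j}^{\sfrac1p}u_{h_j}\to\bar a$ in $L^p_\loc(B_r;\Rdue)$ and $\cost_{h_j}\int_A|u_{h_j}|^p\to\int_A|\bar a|^p$ for every $A\subset\subset B_r$. Together with the bulk lower semicontinuity and the nonnegativity of the surface term this gives
\[
\liminf_{j\to\infty}G_0(u_{h_j},\cost_{h_j},\beta_{h_j},B_\rho)\ \ge\ \int_{B_\rho}\fz(e(u))\,dx+\int_{B_\rho}|\bar a|^p\,dx\qquad(\rho<r).
\]

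\emph{The competitor and the conclusions.} Fix $v\in u+W^{1,p}_0(B_r;\Rdue)$; by density of the maps coinciding with $u$ outside some $\overline B_\tau$, $\tau<r$, and continuity of $w\mapsto\int_{B_r}\fz(e(w))$, I may assume $v=u$ on $B_r\setminus\overline B_\tau$. For a small $\delta>0$, a mean‑value/pigeonhole argument provides radii $\tau<\sigma_j<\sigma_j'<\tau+\delta$ with $\sigma_j'-\sigma_j$ of order $\delta^2$ such that $\partial B_{\sigma_j},\partial B_{\sigma_j'}$ miss the exceptional balls of $z_j$, on the annulus $A_j:=B_{\sigma_j'}\setminus B_{\sigma_j}$ both $\int_{A_j}|e(u_{h_j})|^p$ and $\beta_{h_j}\calH^1(J_{z_j}\cap A_j)$ are $O(\delta)$, and the traces of $z_j-a_j$ and $u$ on $\partial B_{\sigma_j'}$ are close. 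I set $v_j:=v+a_j$ in $B_{\sigma_j}$, $v_j:=\varphi_j(v+a_j)+(1-\varphi_j)z_j$ in $A_j$, and $v_j:=u_{h_j}$ in $B_r\setminus B_{\sigma_j'}$, with $\varphi_j$ a Lipschitz cutoff equal to $1$ near $\partial B_{\sigma_j}$ and to $0$ near $\partial B_{\sigma_j'}$; then $\{v_j\ne u_{h_j}\}\subset\subset B_r$ and: (a) $J_{v_j}\cap B_{\sigma_j'}\subseteq J_{z_j}\cap A_j$, so $2\beta_{h_j}\calH^1(J_{v_j}\cap B_{\sigma_j'})=O(\delta)$; (b) $e(v_j)=e(v)$ in $B_{\sigma_j}$ (since $e(a_j)=0$) and $e(v_j)=\varphi_je(v)+(1-\varphi_j)e(z_j)+\nabla\varphi_j\odot(u-(z_j-a_j))$ in $A_j$, the last term being infinitesimal in $L^p(A_j)$ as $j\to\infty$ because $z_j-a_j\to u$ in $L^p_\loc$; (c) $\cost_{h_j}^{\sfrac1p}v_j\to\bar a$ in $L^p(B_{\tau+\delta})$ (using $v_j-a_j=v$ in $B_{\sigma_j}$, $v_j-a_j=\varphi_ju+(1-\varphi_j)(z_j-a_j)$ in $A_j$, $v_j=u_{h_j}$ beyond, and $\cost_{h_j}^{\sfrac1p}a_j\to\bar a$). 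Inserting $v_j$ into $G_0(u_{h_j},\cost_{h_j},\beta_{h_j},B_r)\le G_0(v_j,\cost_{h_j},\beta_{h_j},B_r)+\Psi_0(u_{h_j},\cost_{h_j},\beta_{h_j},B_r)$, splitting off the region $B_r\setminus\overline B_{\sigma_j'}$ on which $v_j=u_{h_j}$ (legitimate for a.e.\ $\sigma_j'$), and using (a)--(c) together with the absolute continuity of $\int|e(u)|^p$ and $\int|\bar a|^p$, I obtain, for all $\rho<\tau$,
\[
\limsup_{j\to\infty}G_0(u_{h_j},\cost_{h_j},\beta_{h_j},B_\rho)\ \le\ \int_{B_\tau}\fz(e(v))\,dx+\int_{B_\tau}|\bar a|^p\,dx+\omega(\delta),
\]
with $\omega(\delta)\to0$ as $\delta\to0$. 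Taking $v=u$ (so $\tau$ may be chosen in $(\rho,r)$) and letting $\delta\to0$, $\tau\downarrow\rho$ gives the bound complementary to the liminf inequality, hence \ref{propconvenerg-itrho}; squeezing $G_0=(\text{bulk}+\text{fidelity})+\text{surface}$ between the two bounds --- with the fidelity term treated as above and the bulk term by lower semicontinuity --- forces $\beta_{h_j}\calH^1(J_{u_{h_j}}\cap B_\rho)\to0$ and $\int_{B_\rho}\fz(e(u_{h_j}))\to\int_{B_\rho}\fz(e(u))$; since $\fz(\xi)=\tfrac1p(\C\xi\cdot\xi)^{\sfrac p2}$ depends only on $\sym\,\xi$ and is, as a function of it, strictly convex with $p$‑growth from below, this convergence of the bulk energies upgrades the weak $L^p_\loc$ convergence of $e(u_{h_j})$ to strong convergence, giving \ref{propconvenerg-remark}. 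For a general $v$, keeping $\tau$ fixed, letting $\rho\uparrow\tau$ and $\delta\to0$ in the displayed inequality and combining with the liminf inequality yields $\int_{B_\tau}\fz(e(u))\le\int_{B_\tau}\fz(e(v))$, which, since $v=u$ on $B_r\setminus\overline B_\tau$, is exactly \ref{propconvenerg-itmin}.

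\emph{Main obstacle.} The heart of the argument is the competitor: the surface energies, being merely bounded (not infinitesimal) after multiplication by $\beta_{h_j}$, must cancel or be made negligible in the comparison, which forces the interpolation to be routed through the Sobolev approximant $z_j$ along a very thin annulus carrying an arbitrarily small share of $J_{z_j}$; at the same time the fidelity term must be handled without a spurious multiplicative constant, which works only because $v_j$ is recentered at $a_j$, so that $\cost_{h_j}^{\sfrac1p}(v_j-a_j)\to0$ while $\cost_{h_j}^{\sfrac1p}a_j\to\bar a$. Arranging, on the same annulus and along a single subsequence, the simultaneous smallness of $\int_{A_j}|e(u_{h_j})|^p$ and $\beta_{h_j}\calH^1(J_{z_j}\cap A_j)$ together with the $L^p(\partial B_{\sigma_j'})$‑closeness of $z_j-a_j$ and $u$ is the technical core; a secondary issue is verifying that the $GSBD^p$‑to‑$SBD^p$ reduction preserves the near‑minimality.
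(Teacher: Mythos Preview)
There is a real gap in your reduction to $SBD^p$. You claim $\Psi_0(\tilde u_h,\kappa_h,\beta_h,B_r)\to0$ by transferring competitors via $w\mapsto w+(u_h-\tilde u_h)$, but this fails at the surface term: since $J_{w+(u_h-\tilde u_h)}\subseteq J_w\cup J_{u_h}\cup J_{\tilde u_h}$, the surface energy of the transferred competitor picks up an extra $2\beta_h\bigl(\mathcal H^1(J_{u_h})+\mathcal H^1(J_{\tilde u_h})\bigr)$, and this quantity is only \emph{bounded} (from $\sup_h G_0<\infty$), not infinitesimal. Theorem~\ref{flaviana} controls only $|\mathcal H^1(J_{u_h})-\mathcal H^1(J_{\tilde u_h})|$, not $\mathcal H^1(J_{u_h}\triangle J_{\tilde u_h})$, so there is no mechanism making this extra contribution vanish. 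Hence the quasi-minimality inequality you invoke for the reduced sequence is unjustified, and what you flag as a ``secondary issue'' is in fact the central obstruction.

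The paper circumvents this by \emph{not} reducing fully: the $SBV^p$ approximants $v_h$ are used only for the compactness step (Proposition~\ref{propcomp}), while the competitor $\bar u_h$ is tested against the \emph{original} $u_h$ and is built with \emph{two} cut-offs---an inner one from $v+a_h$ to $z_h^{(\rho)}$, and an outer one from $z_h^{(\rho)}$ (which equals $v_h$ outside its exceptional balls) back to $u_h$. The outer transition costs a gradient term controlled by $\|v_h-u_h\|_{L^p}\to0$ and a bulk/surface term on an annulus controlled via Helly's theorem: one extracts a subsequence so that $\Lambda(s):=\lim_h G_0(u_h,\ldots,B_s)$ exists for a.e.\ $s$, and then lets the annulus shrink around a continuity point of $\Lambda$, so that $\Lambda(\bar\rho)-\Lambda(\rho''')\to0$. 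Your single cut-off plus pigeonhole would become correct if you added this second outer layer back to the original $u_h$. A smaller gap: you assert $\kappa_{h_j}^{1/p}(u_{h_j}-a_j)\to0$ in $L^p_{\mathrm{loc}}$ from $\kappa_{h_j}\to0$ and $u_{h_j}-a_j\to u$, but Proposition~\ref{propcomp} gives only pointwise a.e.\ convergence of $u_{h_j}-a_j$ (the $L^p_{\mathrm{loc}}$ convergence is for $z_j-a_j$). The paper instead bounds $\kappa_h^{1/p}a_h$ through Lemma~\ref{affine}, deduces \emph{weak} convergence $\kappa_h^{1/p}u_h\rightharpoonup\bar a$, and upgrades to strong convergence only \emph{after} the energy equality is established---exactly as you correctly do for $e(u_{h_j})$.
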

\begin{proof}
\def\w{v}
\def\rhoout{\rho}
\def\rhomid{{\rho'}}
\def\rhoin{{\rho''}}
\def\rhobar{{\overline{\rho}}}
\def\s{s}
\def\rhoinin{{\rho'''}}

{Theorem}~\ref{flaviana}
provides $\w_h\in SBV^p\cap L^\infty(B_r;\R^2)$, for every $h\in\N$, 
such that 
\begin{multline}\label{e:density}
\|e(u_h)-e(\w_h)\|_{L^p(B_r;\R^{2\times 2})}+{|\calH^{1}(J_{u_h})-\calH^{1}(J_{v_h})|}\\
+\|u_h-\w_h\|_{L^p(B_r;\R^2)}\leq (h+\beta_h^2)^{-1}.
\end{multline}
In particular, {for all $\rho\in(0,r]$ 
\begin{equation}\label{e:G0uhG0vh}
\limsup_{h\to\infty} G_0(u_h,\cost_h,\beta_h,B_\rho)=
\limsup_{h\to\infty} G_0(\w_h,\cost_h,\beta_h,B_\rho),
\end{equation}}
and  
\begin{equation*}
\lim_{h\to\infty} \calH^1(J_{\w_h})=0\,.
\end{equation*}
Hence, $(\w_h)_{h\in\N}$ satisfies \eqref{e:uhhp} in Proposition~\ref{propcomp}.
Let {$a_{h_j}$} and $u$ be the functions obtained by Proposition~\ref{propcomp},
then ${v_{h_j}-a_{h_j}}\to u$ pointwise {$\calL^2$-a.e. on $B_r$.
Recall that Proposition~\ref{propcomp} {and \eqref{e:density}}  imply that
\begin{equation}\label{eqfualpharho}
\int_{B_\rho} \fz(e(u)) dx \le \liminf_{h\to\infty} \int_{B_\rho} \fz(e(u_h))dx.
\end{equation}
Additionally, {up to extracting a further subsequence} we may assume that {$u_{h_j}-a_{h_j}\to u$} pointwise $\calL^2$-a.e. on $B_r$ by \eqref{e:density}.
Here and henceforth we denote $h_j$ by $h$ for simplicity.
}

Since $\s\mapsto G_0(u_h,\kappa_h,\beta_h,B_\s)$
is nondecreasing and uniformly bounded, 
by Helly's theorem we can extract a subsequence, not relabeled for convenience, 
such that the pointwise limit
\begin{equation}\label{e:lambda}
\lim_{h\to\infty}G_0(u_h,\kappa_h,\beta_h,B_\s)=:\Lambda(\s)
\end{equation}
exists finite for $\calL^1$-a.e. $\s\in(0,r)$, and $\Lambda$ is a  nondecreasing function. 
{Define $I\subseteq(0,r)$ to be the set of radii where \eqref{e:lambda} holds true.}

Being $(\cost_h^{\sfrac 1p}u_h)_h$ bounded in $L^p(B_r;\R^2)$, 
it has a subsequence (not relabeled) converging to some 
${\overline a}\in L^p(B_r;\R^2)$ weakly in $L^p(B_r;\R^2)$.
At the same time  $\kappa_h^{1/p}(u_h-a_h)\to0$ pointwise $\calL^2$-a.e. in $B_r$, {as 
$\kappa_h\downarrow 0$ as $h\to\infty$,}
therefore {$(\kappa_h^{1/p}a_h)_h$ is bounded in $L^p(B_r;\R^2)$ by 
{Lemma~\ref{affine}}. 
Hence, by the Urysohn property, by the weak $L^p$-convergence of 
$(\kappa_h^{1/p}u_h)_h$} and by the {equiintegrability of $(\cost_h^{\sfrac 1p}(u_h-a_h))_h$}
we obtain that in turn $(\kappa_h^{1/p}a_h)_h$ converges weakly to $\bar a$ in  $L^p(B_r;\R^2)$. 
Since $\kappa_h^{1/p}a_h$ are affine functions, and the space of affine functions is finite dimensional, 
convergence is actually strong, and $\overline a$ is affine on $B_r$, with $e(\bar a)=0$. 

Fixed {$\rhoout\in I$} a continuity point of $\Lambda$ satisfying \eqref{e:lambda}
we apply Proposition~\ref{propcomp} again to $B_\rhoout$ and obtain a subsequence of $h$ not relabeled, 
a sequence $(z_h^{(\rho)})_h\in SBD^p({B_\rhoout})$, and a sequence $a_h^{(\rho)}:\R^2\to\R^2$ of affine 
functions with $e(a_h^{(\rho)})=0$, such that $\w_h-a_h^{(\rho)}\to u^{(\rho)}$ $\calL^2$-a.e. on $B_\rhoout$, 
$z_h^{(\rho)}-a_h^{(\rho)}\to {u^{(\rho)}}$ in $L^p_\loc(B_\rhoout;\R^2)$ and 
$\{z_h^{(\rho)}\neq \w_h\}\subset\hskip-0.125cm\subset B_\rhoout$, for some $u^{(\rho)}\in W^{1,p}(B_\rho;\R^2)$. 
{Thus, we may consider $z_h^{(\rho)}$ as a function in $SBD^p(B_r)$ by extending it equal to $v_h$ 
on $B_r\setminus B_\rho$.}

{Next note that $z_h^{(\rho)}-a_h\to u$ in  $L^p_\loc(B_\rho;\R^2)$, where $a_h$ and $u$ are the 
globally chosen functions introduced above. 
This claim easily follows from the convergences $\w_h-a_h\to u$ $\calL^2$-a.e. 
on $B_r$ and $\w_h-a_h^{(\rho)}\to u^{(\rho)}$ $\calL^2$-a.e. on $B_\rhoout$. Indeed, from these 
we deduce that $a_h^{(\rho)}-a_h\to u-u^{(\rho)}$ in $L^p(B_\rho;\R^2)$.
Hence, the claim follows at once by taking into account this and the convergence
$z_h^{(\rho)}-a_h^{(\rho)}\to {u^{(\rho)}}$ in $L^p_\loc(B_\rhoout;\R^2)$.}

Let $v\in W^{1,p}(B_r;\R^2)$ be such that $\{u\neq v\}\subset\hskip-0.125cm\subset B_\rho$ and let 
$0<\rhoinin<\rhoin<\rhomid<\rhoout<\rhobar<r$, with $\rhoinin,\,\rhobar\in I$ and 
{assume in addition that} $\{u\neq v\}\subseteq B_\rhoin$.

Let $\zeta\in C^\infty_c(B_{\rhomid};[0,1])$, $\varphi\in C^\infty_c(B_{\rhobar};[0,1])$ be cut-off
functions such that $\zeta=1$ on $B_\rhoin$, $\varphi=1$ on $B_\rhoout$, and
$\|\nabla\zeta\|_{L^\infty(B_\rhomid;\R^2)}\leq 2(\rhomid-\rhoin)^{-1}$, 
$\|\nabla\varphi\|_{L^\infty(B_{\rhobar};\R^2)}\leq 2(\rhobar-\rhoout)^{-1}$. Define
\[
\overline{u}_h:=\zeta (v+a_h) + (1-\zeta) \big(\varphi\, {z_h^{(\rho)}}+(1-\varphi)u_h\big)
\] 
and note that
\[
\overline{u}_h= \begin{cases}
\zeta (v+a_h) + (1-\zeta) {z_h^{(\rho)}} & \text{on $B_\rhomid$} \\
\varphi\, {z_h^{(\rho)}}+(1-\varphi)u_h   & \text{on $B_r\setminus B_\rhomid$.}
\end{cases}
\]
Since $\{\overline{u}_h \ne u_h\}\subset\hskip-0.125cm\subset B_\rhobar$, by the very definition of $\Psi_0$ we have
\begin{equation}\label{e:uhquasimin}
G_0(u_h,\cost_h,\beta_h,B_\rhobar)\le G_0(\overline{u}_h,\cost_h,\beta_h,B_\rhobar)+\Psi_0(u_h,\cost_h,\beta_h,B_r).
\end{equation}
We estimate separately the contributions on $B_{\rhomid}$ and $B_\rhobar\setminus B_{\rhomid}$ 
for the first summand on the right hand side above as follows. First, for some $c=c(p)>0$ we have
\begin{align*}
G_0 & \,(\overline{u}_h,\cost_h,\beta_h,B_{\rhomid}) \leq  G_0(v+a_h,\cost_h,\beta_h,B_{\rhoin})+
c\,G_0(v+a_h,\cost_h,\beta_h,B_{\rhomid}\setminus B_{\rhoin})\notag\\
& +c\,G_0({z_h^{(\rho)}},\cost_h,\beta_h,B_{\rhomid}\setminus B_{\rhoin})+\frac c{(\rhomid-\rhoin)^p}
\int_{B_{\rhomid}\setminus B_{\rhoin}}|v+a_h-{z_h^{(\rho)}}|^pdx\notag\\
& = \int_{B_\rhoin}\fz(e(v))dx+ \cost_h\int_{B_\rhoin}|v+a_h|^pdx \notag\\
& +c\int_{B_{\rhomid}\setminus B_{\rhoin}}\fz(e(v))dx+c\, \cost_h\int_{B_{\rhomid}\setminus B_{\rhoin}}|v+a_h|^pdx\notag\\
& +c\,G_0({z_h^{(\rho)}},\cost_h,\beta_h,B_{\rhomid}\setminus B_{\rhoin})+\frac c{(\rhomid-\rhoin)^p}
\int_{B_{\rhomid}\setminus B_{\rhoin}}|v+a_h-{z_h^{(\rho)}}|^pdx.
\end{align*}
Moreover, since $\{{z_h^{(\rho)}}\neq \w_h\}\subset\hskip-0.125cm\subset B_\rhoout$, {and $\overline{u}_h=z_h^{(\rho)}$ on 
$B_\rho\setminus B_{\rhomid}$} we have
\begin{align*}
G_0(\overline{u}_h,\cost_h,\beta_h,{B_\rhobar\setminus B_{\rhomid}}) 
\leq  & c\,G_0({z_h^{(\rho)}},\cost_h,\beta_h,
{B_{\rhobar}\setminus B_{\rhomid}})
+c\, G_0(u_h,\cost_h,\beta_h,B_\rhobar\setminus B_{\rhoout})\\
& +\frac c{(\rhobar-\rhoout)^p}\int_{B_{\rhobar}\setminus B_{\rhoout}}|\w_h-u_h|^pdx.
\end{align*}
Therefore, since  $u=v$ on $B_\rhoout\setminus B_\rhoin$ 
we deduce that
\begin{align}\label{e:Gzh3}
G_0 & \,(\overline{u}_h,\cost_h,\beta_h,B_{\rhobar}) \leq 
\int_{B_\rhoin}\fz(e(v))dx+\cost_h\int_{B_\rhoin}|v+a_h|^pdx\notag\\
+ & c\int_{B_{\rhomid}\setminus B_{\rhoin}}\fz(e(v))dx 
+c\, \cost_h\int_{B_{\rhomid}\setminus B_{\rhoin}}|u+a_h|^pdx \notag\\ 
+ & c\,G_0({z_h^{(\rho)}},\cost_h,\beta_h,B_{\rhobar}\setminus B_{\rhoin})
+c\, G_0(u_h,\cost_h,\beta_h,B_\rhobar\setminus B_{\rhoout})\notag \\
+ & \frac c{(\rhomid-\rhoin)^p}\int_{B_{\rhomid}\setminus B_{\rhoin}}|u+a_h-{z_h^{(\rho)}}|^pdx
+\frac c{(\rhobar-\rhoout)^p}\int_{B_{\rhobar}\setminus B_{\rhoout}}|\w_h-u_h|^pdx.
\end{align}
Note that  {by (ii)-(iii) in} Proposition~\ref{propcomp} 
we have, {for $h$ sufficiently large,}
\[
G_0({z_h^{(\rho)}}, \cost_h,\beta_h,B_{\rhobar}\setminus B_{\rhoin}) \le c\,
G_0(v_h,\cost_h,\beta_h,B_{\rhobar}\setminus B_{\rhoinin}).
\]
therefore 
\eqref{e:G0uhG0vh}, \eqref{e:lambda} and the choices of the radii $\rhoinin,\rhobar\in I$ yield 
\begin{equation*}
\limsup_{h\to\infty} \big(
G_0({z_h^{(\rho)}},\cost_h,\beta_h,B_{\rhobar}\setminus B_{\rhoin})
+G_0(u_h,\cost_h,\beta_h,B_\rhobar\setminus B_{\rhoout})\big)\leq c(\Lambda(\rhobar)-\Lambda(\rhoinin)).
\end{equation*}
Moreover, recalling the convergences $u_h-\w_h\to 0$ in $L^p(B_r;\R^2)$, ${z_h^{(\rho)}}-a_h\to u$ 
$L^p(B_\rhomid;\R^2)$, $\cost_h^{\sfrac 1p}a_h\to\overline{a}$ in $L^p(B_r;\R^2)$ and 
$\kappa_h\to 0$ as $h\to\infty$, we infer
\begin{align*}
\lim_{h\to\infty} & \Big(\cost_h\int_{B_{\rhomid}\setminus B_{\rhoin}}|u+a_h|^pdx
+\frac 1{(\rhomid-\rhoin)^p}\int_{B_{\rhomid}\setminus B_{\rhoin}}|u+a_h-{z_h^{(\rho)}}|^pdx
\\
& +\frac 1{(\rhobar-\rhoout)^p}\int_{B_{\rhobar}\setminus B_{\rhoout}}|\w_h-u_h|^pdx\Big) 
{=\int_{B_{\rhomid}\setminus B_{\rhoin}}|\overline{a}|^pdx,}
\end{align*}
Hence, by taking the superior limit as $h\to\infty$ in \eqref{e:uhquasimin}, in view of \eqref{e:Gzh3} 
and the last two inequalities we get 
\begin{align*}
&\Lambda(\rhobar)
\leq \int_{B_\rhoin}\fz(e(v))dx+\int_{B_\rhoin}|\overline{a}|^pdx
+c\int_{B_{\rhomid}\setminus B_{\rhoin}}\fz(e(v))dx\\& +c\, \int_{B_{\rhomid}\setminus B_{\rhoin}}|\overline{a}|^pdx 
+ c(\Lambda(\rhobar)-\Lambda(\rhoinin)).
\end{align*}
On the other hand, the weak convergence of $(\cost_h^{\sfrac 1p}u_h)_h$ to $\overline{a}$ {and \eqref{eqfualpharho}}
yield
\begin{equation}\label{eqf0alamb}
\int_{B_\rhoout} \fz(e(u))dx+\int_{B_\rhoout}|\overline{a}|^pdx \leq
\liminf_{h\to\infty} \int_{B_\rhoout} \big(\fz(e(u_h)) + \cost_h|u_h|^p\big)\,dx \leq \Lambda(\rhoout). 
\end{equation}
Therefore, from the last two inequalities we conclude as $\rhoinin,\rhobar\to\rhoout$
\begin{align}\label{e:propconvenerg-itrho}
\int_{B_\rhoout} \fz(e(u))dx\,+& \int_{B_\rhoout}|\overline{a}|^pdx 
\notag\\
&\leq \Lambda(\rhoout) \leq\int_{B_\rhoout} \fz(e(v))dx+\int_{B_\rhoout}|\overline{a}|^pdx,
\end{align}
and thus in particular
\begin{equation}\label{e:umin}
\int_{B_\rhoout} \fz(e(u))dx\leq  \int_{B_\rhoout} \fz(e(v))dx
\end{equation}
for all $v\in W^{1,p}(B_r;\R^2)$ such that $\{u\neq v\}\subset\hskip-0.125cm\subset B_\rhoout$ and for $\calL^1$ a.e. 
$\rhoout\in(0,r)$. 
Clearly, a simple approximation argument yields that the inequality \eqref{e:umin} holds for all 
$v\in u+W^{1,p}_0(B_r;\R^2)$, i.e. item \ref{propconvenerg-itmin} is established.

Finally, setting $v=u$ in \eqref{e:propconvenerg-itrho}, we deduce that for $\calL^1$ a.e. $\rhoout\in(0,r)$
\[
\int_{B_\rhoout} \fz(e(u))+ |\overline{a}|^pdx=\Lambda(\rhoout).
\]
Being the left-hand side there continuous as a function of $\rhoout$, ${\Lambda}$ turns out to be continuous 
as well, and recalling its very definition and the monotonicity of the integral we conclude that convergence 
in \eqref{e:lambda}
holds for all $\rhoout\in(0,r)$, i.e. item \ref{propconvenerg-itrho} is established as well. 
Furthermore, from this and \eqref{e:propconvenerg-itrho}
above one deduces {that equality holds in \eqref{eqf0alamb}, and therefore that the convergence of $e(u_h)$ 
and $\kappa_h^{1/p} u_h$ is strong, which concludes the proof of}
\ref{propconvenerg-remark}.
\end{proof}

{We are now ready to prove a fundamental decay property of $G_0$ by following
the ideas in \cite[Lemma 3.9]{CarrieroLeaci91}. 
Nevertheless, we note explicitly that contrary to \cite[Lemma 3.9]{CarrieroLeaci91} the lack 
of truncation arguments forces to take also into account the fidelity term in the decay process, 
since a priori we have no $L^\infty$ bound on local minimizers.}
{As part of the argument extends directly to higher dimension, we give a proof of the density lower bound that depends only on the decay property. However,  the decay property has been proven using  the regularity of Sobolev minimizers as well as Propositions \ref{propcomp} and \ref{propconvenerg}, which have only been established in dimension $n=2$.}

{
\begin{definition}\label{d:decay}
Let $n\ge 2$, $p\in(1,\infty)$, $\cost\ge 0$, $\beta>0$.
 We say that the decay property holds for the functional $G_0$ in dimension $n$ if
the following is true. For any
  $\gamma\in(0,1)$ there is 
 $\tau_\gamma>0$  such that  for all 
$\tau\in(0,\tau_\gamma]$ there exist $\eps\in(0,1)$, $\vartheta\in(0,1)$, and $R>0$, such that if 
$u\in GSBD^p(\Omega)$ satisfies
\begin{equation*}
{\calH^{n-1}(J_u\cap {B}_\rho(x))}\leq\eps\rho^{n-1} \text{ and }
G_0(u,\cost,\beta,B_\rho(x))\leq(1+\vartheta) \Phi_0(u,\cost,\beta,B_\rho(x))
\end{equation*}
for some
${B}_\rho(x)\subset\hskip-0.125cm\subset\Omega$ with $0<\rho<R$, then
\[
 G_0(u,\cost,\beta, B_{\tau\rho}(x)) \le \tau^{n-\gamma} G_0(u,\cost,\beta, B_\rho(x)).
\]
 \end{definition}
}

{
\begin{lemma}[Decay]\label{lemmadecay}
The decay property holds in dimension $n=2$ for any
 $p\in(1,\infty)$, $\cost\ge 0$, $\beta>0$.
\end{lemma}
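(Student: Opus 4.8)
The plan is to argue by contradiction, following the classical scheme of De Giorgi--Carriero--Leaci as adapted in \cite{CarrieroLeaci91}. Fix $\gamma\in(0,1)$. By Proposition~\ref{p:decayVmu} applied with some $\gamma'<\gamma$, Sobolev (local) minimizers $w$ of $v\mapsto\int f_0(e(v))$ satisfy the decay $\int_{B_{\tau\rho}}f_0(e(w))\le c_{\gamma'}\tau^{2-\gamma'}\int_{B_\rho}f_0(e(w))$. One then chooses $\tau_\gamma$ small enough that $c_{\gamma'}\tau^{2-\gamma'}\le\frac12\tau^{2-\gamma}$ for all $\tau\le\tau_\gamma$, leaving a geometric gap that must be destroyed by passing to the limit. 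Fix $\tau\le\tau_\gamma$. Suppose the conclusion fails: then there are sequences $\eps_h\downarrow0$, $\vartheta_h\downarrow0$, $R_h\downarrow0$, points $x_h$, radii $\rho_h<R_h$ and functions $u_h\in GSBD^p(\Omega)$ with $\calH^1(J_{u_h}\cap B_{\rho_h}(x_h))\le\eps_h\rho_h$, $G_0(u_h,\cost,\beta,B_{\rho_h}(x_h))\le(1+\vartheta_h)\Phi_0(u_h,\cost,\beta,B_{\rho_h}(x_h))$, yet $G_0(u_h,\cost,\beta,B_{\tau\rho_h}(x_h))>\tau^{2-\gamma}G_0(u_h,\cost,\beta,B_{\rho_h}(x_h))$.

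Next I would rescale to the unit ball. Set $\sigma_h^p:=\rho_h^{-2}G_0(u_h,\cost,\beta,B_{\rho_h}(x_h))$ (the natural energy density; one checks $\sigma_h>0$ for $h$ large, otherwise $u_h$ is trivial on $B_{\rho_h}$ and the decay is immediate) and define $\tilde u_h(y):=\sigma_h^{-1}u_h(x_h+\rho_h y)$ on $B_1$. A scaling computation shows $\tilde u_h$ is, up to relabelling the parameters $\cost_h:=\cost\,\rho_h^{p}\sigma_h^{p-2}\to0$ (using $\rho_h\to0$ and that $\sigma_h$ is bounded below away from a degenerate regime, or handling the degenerate case separately) and $\beta_h:=\beta\sigma_h^{-p}$, an almost-minimizer in the sense that $G_0(\tilde u_h,\cost_h,\beta_h,B_1)\le 1$ after normalization, $\Psi_0(\tilde u_h,\cost_h,\beta_h,B_1)\to0$, and $\calH^1(J_{\tilde u_h}\cap B_1)\le\eps_h\to0$. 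Thus $\tilde u_h$ satisfies exactly the hypotheses of Proposition~\ref{propconvenerg} on $B_1$. Applying it gives a subsequence, affine maps $a_j$ with $e(a_j)=0$, an affine $\overline a$ with $e(\overline a)=0$, and a limit $u\in W^{1,p}(B_1;\R^2)$ which is a local minimizer of $v\mapsto\int f_0(e(v))$, such that $G_0(\tilde u_{h_j},\cost_{h_j},\beta_{h_j},B_\rho)\to\int_{B_\rho}f_0(e(u))+\int_{B_\rho}|\overline a|^p$ for every $\rho\in(0,1)$, and additionally $\cost_{h_j}^{1/p}\tilde u_{h_j}\to\overline a$ strongly in $L^p_{\loc}$.

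I would then apply Proposition~\ref{p:decayVmu} to $u$ on $B_{1/2}$ (say, with radii $\tau$ and $1/2$, or better $\tau$ and any $s<1$ and let $s\to1$): $\int_{B_\tau}f_0(e(u))\le c_{\gamma'}\tau^{2-\gamma'}\int_{B_1}f_0(e(u))$. The lower-order term $\int|\overline a|^p$ scales like $\tau^2$ under restriction to $B_\tau$, which is better than $\tau^{2-\gamma}$, so combining the two, and using the convergence of full energies at the radii $\tau$ and $1$ (continuity of $\Lambda$ from Proposition~\ref{propconvenerg}\ref{propconvenerg-itrho}, so that no mass escapes), we get $\limsup_j G_0(\tilde u_{h_j},\cost_{h_j},\beta_{h_j},B_\tau)\le\tau^{2-\gamma}\liminf_j G_0(\tilde u_{h_j},\cost_{h_j},\beta_{h_j},B_1)$ for $h$ large — possibly after a harmless diagonal choice of a radius slightly below $1$ and slightly above $\tau$ and absorbing constants into the choice of $\tau_\gamma$. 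Undoing the rescaling contradicts the standing assumption $G_0(u_h,\cost,\beta,B_{\tau\rho_h}(x_h))>\tau^{2-\gamma}G_0(u_h,\cost,\beta,B_{\rho_h}(x_h))$ for large $h$, proving the lemma.

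The main obstacle I anticipate is the bookkeeping in the rescaling step: one must verify that the rescaled parameters $\cost_h,\beta_h$ indeed fall in the regime required by Proposition~\ref{propconvenerg} (in particular $\cost_h\to0$), and one must correctly handle the normalization $\sigma_h$ — distinguishing the ``nondegenerate'' case where $\sigma_h$ stays comparable to $\rho_h$-independent constants from degenerate cases, and making sure the fidelity term, which unlike the scalar setting cannot be truncated away, scales favorably (exponent $2$ versus $2-\gamma$). A secondary subtlety is ensuring that the convergence of energies holds at the two specific radii $\tau$ and (close to) $1$ used in the iteration, which is exactly what the continuity of $\Lambda$ in Proposition~\ref{propconvenerg} delivers, but one has to invoke it at a continuity point and then pass to the boundary radius; this is routine but must be stated with care. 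Everything else is a direct transcription of the $\Gamma$-convergence / blow-up argument of \cite{CarrieroLeaci91}, with Proposition~\ref{propconvenerg} replacing the scalar compactness lemma and Proposition~\ref{p:decayVmu} replacing the scalar elliptic decay.
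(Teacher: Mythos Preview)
Your approach is essentially the paper's: contradict, rescale to $B_1$, apply Proposition~\ref{propconvenerg} to pass to a Sobolev limit $u$ plus an affine remainder $\overline a$, apply Proposition~\ref{p:decayVmu} (with exponent $\gamma/2$) to the bulk term and observe that $\int_{B_\tau}|\overline a|^p$ scales like $\tau^2$, then reach a contradiction with the strict inequality at radius $\tau$.

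Two points where the paper is cleaner than your sketch. First, the rescaling: the paper sets $\sigma_h:=\rho_h/G_0(u_h,\cost,\beta,B_{\rho_h}(x_h))$ and $v_h(y):=(\sigma_h\rho_h)^{1/p}\rho_h^{-1}u_h(x_h+\rho_h y)$, which gives exactly $G_0(v_h,\cost\rho_h^p,\beta\sigma_h,B_1)=1$ and $\Psi_0(v_h,\cost\rho_h^p,\beta\sigma_h,B_1)=\vartheta_h/(1+\vartheta_h)\to0$. The rescaled fidelity coefficient is $\cost_h=\cost\rho_h^p$, which tends to $0$ purely because $\rho_h\to0$; there is no dependence on $\sigma_h$ here, so the ``degenerate regime'' you worry about does not arise. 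Your formulas $\sigma_h^p=\rho_h^{-2}G_0$, $\cost_h=\cost\rho_h^p\sigma_h^{p-2}$, $\beta_h=\beta\sigma_h^{-p}$ do not make the three terms scale by a common factor; if you recompute you will find $\cost_h=\cost\rho_h^p$ is forced regardless of how $\sigma_h$ is defined, and the case distinction disappears.

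Second, the affine term: the paper makes the $\tau^2$ bound quantitative via Lemma~\ref{affine}, obtaining $\int_{B_\tau}|\overline a|^p\le\|\overline a\|_{L^\infty(B_1)}^p\calL^2(B_1)\tau^2\le\bar c^{\,p}\tau^2$ since $\int_{B_\rho}|\overline a|^p\le1$ for $\rho<1$ by \eqref{e:bounda}. The threshold $\tau_\gamma$ is then fixed by $\max\{c_{\gamma/2}\tau_\gamma^{\gamma/2},\bar c^{\,p}\tau_\gamma^{\gamma},\tau_\gamma\}=\tfrac12$. This replaces your ``absorbing constants into the choice of $\tau_\gamma$'' with an explicit condition.
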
}
In what follows $c_\gamma$ denotes the constant in Proposition~\ref{p:decayVmu} having chosen 
$\gamma>0$, and $\bar{c}$ that of Lemma~\ref{affine}. 
\begin{proof}[{Proof of Lemma \ref{lemmadecay}}]
{
Let $\tau_\gamma>0$ be such that 
$\max\{c_{\sfrac\gamma2}\tau_\gamma^{\sfrac\gamma 2},{\bar{c}^p\,\tau_\gamma^\gamma},\tau_\gamma\}=\sfrac 12$.}

By contradiction suppose the statement false. Then there would be {$\tau\in(0,\tau_\gamma]$ and}
three sequences $\eps_h\to0$, $\vartheta_h\to0$, $\rho_h\to0$, a sequence $u_h\in {GSBD^p(\Omega)}$, 
and a sequence of balls ${B_{\rho_h}(x_h)\subset\hskip-0.125cm\subset\Omega}$ such that 
\begin{align*}
&{\calH^{1}({J_{u_h}}\cap B_{\rho_h}(x_h))}= \eps_h\,\rho_h,\\
&G_0(u_h,\cost, \beta, B_{\rho_h}(x_h)) ={(1+\vartheta_h)} \Phi_0(u_h,\cost,\beta,B_{\rho_h}(x_h)),
 \end{align*}
 with
 \begin{equation*}
 G_0(u_h,\cost,\beta, B_{\tau\rho_h}(x_h)) > \tau^{2-\gamma} G_0(u_h,\cost,\beta, B_{\rho_h}(x_h)).  
 \end{equation*}
We define
\begin{equation*}
 \sigma_h:= \frac{\rho_h}{G_0(u_h,\cost,\beta, B_{\rho_h}(x_h))} \quad\text{ and }\quad 
 v_h(y):=\frac{(\sigma_h\rho_h)^{1/p}}{\rho_h} u_h(x_h+\rho_h y)
\end{equation*}
so that $v_h\in {GSBD^p}(B_1)$ satisfies $\calH^{1}(J_{v_h})=\eps_h$, $G_0(v_h,\cost\rho_h^p,\beta\sigma_h, B_1)=1$, $\Psi_0(v_h,\kappa\rho_h^p,\beta\sigma_h, B_1)={\vartheta_h/(1+\vartheta_h)}$, and
\be{\label{e:contr}G_0(v_h,\cost\rho_h^p, \beta\sigma_h, B_\tau)>\tau^{2-\gamma}.}

{By Proposition~\ref{propconvenerg}} there exist a subsequence $h$ not relabeled, 
a function $v\in W^{1,p}(B_1;\R^{2})$, and affine functions $a_h$ such that $v_h-a_h\to v$
{$\calL^{2}$-a.e. on $B_1$}, and for some affine function $\overline{a}$ with $e(\overline{a})=0$
\begin{equation}\label{e:bounda}
\int_{B_{\rho }} \fz(e(v)) dx 
 +\int_{B_\rho}|\overline{a}|^pdx
= \lim_{h\to\infty}G_0(v_h,\cost\rho_h^p,\beta\sigma_h,B_\rho)\le 1
\end{equation}
for all $\rho\in (0,1)$, with $v$ a {minimizer} of $w\mapsto\int_{B_1} \fz(e(w))dx$
among all $w\in v+W^{1,p}_0(B_1;{\R^{2}})$. 

Hence, by Proposition~\ref{p:decayVmu}, applied with the exponent $\gamma/2$, 
by {Lemma~\ref{affine}} 
and \eqref{e:bounda}
\begin{align*}
\lim_{h\to\infty}& G_0(v_h,\cost\rho_h^p,\beta\sigma_h,B_\tau)= \int_{B_{\tau }} \fz(e(v)) dx
 +\int_{B_\tau}|\overline{a}|^pdx\\
 \le & c_{\sfrac\gamma2}  \tau^{2-\gamma/2}
 +\|\overline{a}\|_{L^\infty(B_\tau;\R^2)}^p{\mathcal{L}^2(B_1)\tau^{2}}
 \leq \big(c_{\sfrac\gamma2}  \tau^{\gamma/2}+{\bar{c}^p}\tau^{\gamma}\big)\tau^{2-\gamma}
 <\tau^{2-\gamma}, 
\end{align*}
where the last inequality follows by the definition of $\tau_\gamma$. This contradicts \eqref{e:contr}. 
\end{proof}
{
\begin{remark}
The conclusions of {Proposition}~\ref{p:decayVmu} actually hold without dimensional limitations (cf. \cite[Proposition~3.4]{ContiFocardiIurlanoRegularity}), but are clearly not enough to deduce in higher dimensions the decay property (cf. 
Definition~\ref{d:decay}).
\end{remark}
}

{We finally  establish the density lower bound for the homogeneous energy $G_0$ and for the jump term. 
The proof of the next result follows the lines of \cite[Lemma 4.3]{FonsecaFusco97}. 
As this argument does not depend  
on dimension except for the decay property we formulate it for general $n$. 
We denote by $J_u^\ast$ the set of points  $x\in {J_u}$ with density one, namely
\begin{equation}\label{e:densone}
J_u^\ast:=\left\{x\in J_u: 
\lim_{\rho\to 0}\frac{\calH^{n-1}(J_u\cap B_\rho(x))}{{\omega_{n-1}\rho^{n-1}}}=1\right\},
\end{equation}
where} {$\omega_{n-1}$ is the $(n-1)$-dimensional Lebesgue measure of 
the unit ball in $\R^{n-1}$.}
\begin{lemma}[Density lower bound for $G_0$]\label{l:lowbound}
{
Let $n\ge 2$, $p>1$, $\cost\ge0 $, $\beta>0$, $\mu\ge0$,
 $g\in L^\infty(\Omega;\R^n)$. 
Assume the decay property holds for $G_0$ in dimension $n$. }
If $u\in GSBD^p(\Omega)$ is a local minimizer of
$G(\cdot,\cost,\beta,\Omega)$ defined in \eqref{e:G}, then there exist $\vartheta_0$ 
and $R_0$, depending only on ${n}$, $p$, $\mathbb{C}$, $\cost$, $\beta$, $\parametermu$, and 
$\|g\|_{L^\infty(\Omega;\R^{n})}$, such that if $0<\rho<R_0$, {$x\in\Omega\cap \overline{J_u^*}$,} and 
$B_\rho(x)\subset\hskip-0.125cm\subset\Omega$, then 
\be{\label{e:lb}
{G_0(u,\cost,\beta,B_\rho(x))\geq \vartheta_0\rho^{n-1}.}}
\end{lemma}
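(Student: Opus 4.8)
The plan is to argue by contradiction and exploit the decay property of Lemma \ref{lemmadecay} to iterate $G_0$ on a shrinking sequence of balls, forcing $G_0$ to decay so fast that a standard density-estimate argument shows the center cannot lie in $\overline{J_u^*}$. First I would fix $\gamma\in(0,1)$, say $\gamma=1/2$, and let $\tau_\gamma,\eps,\vartheta,R$ be the parameters furnished by the decay property for a suitable choice of $\tau\le\tau_\gamma$; I will eventually choose $\tau$ so that $\tau^{n-\gamma}<\tau^{n-1}$ strictly, i.e.\ any $\tau\in(0,1)$ works since $\gamma<1$. The key point is that one must verify the two hypotheses of the decay property at some scale: smallness of the normalized jump energy $\calH^{n-1}(J_u\cap B_\rho(x))/\rho^{n-1}$, and near-minimality $G_0(u,\cost,\beta,B_\rho(x))\le(1+\vartheta)\Phi_0(u,\cost,\beta,B_\rho(x))$. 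The near-minimality is \emph{not} automatic for $G_0$ because $u$ is a local minimizer of $G$ (with $f_\mu$ and the $|u-g|^p$ fidelity term), not of the homogeneous $G_0$; so the first real step is to show that $u$ is a $\Psi_0$-quasi-minimizer of $G_0$ with deviation $\Psi_0(u,\cost,\beta,B_\rho(x))\le C\rho^n$ (or at least $o(\rho^{n-1})$), using that $f_\mu-f_0$ has lower-order growth, that $f_\mu(e(u))$ and $f_0(e(u))$ differ by a controlled amount, and that the fidelity term $\cost\int|u-g|^p$ is bounded on $B_\rho$ because $g\in L^\infty$ and — crucially — because competitors for $G_0$ need not respect boundary data, one can compare $u$ with $u$ itself modified only inside, so that $\Phi_0$ and $\Phi$ are comparable up to $O(\rho^n)$ errors. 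Thus on small balls the hypothesis $G_0\le(1+\vartheta)\Phi_0$ reduces to a lower bound on $G_0(u,\cost,\beta,B_\rho(x))$ of order $\rho^{n-1}$ — which is precisely the conclusion we are trying to prove, so the contradiction hypothesis itself supplies what we need.

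Concretely I would assume for contradiction that for every $\vartheta_0,R_0$ there is a point $x\in\Omega\cap\overline{J_u^*}$ and a radius $\rho<R_0$ with $B_\rho(x)\subset\subset\Omega$ and $G_0(u,\cost,\beta,B_\rho(x))<\vartheta_0\rho^{n-1}$. By semicontinuity of $y\mapsto\overline{J_u^*}\ni y$ and the fact that $G_0(u,\cost,\beta,B_\cdot(\cdot))$ is continuous in the center and monotone in the radius, I may assume $x\in J_u^*$ itself after slightly enlarging the ball. At a density-one jump point one has $\calH^{n-1}(J_u\cap B_r(x))/(\omega_{n-1}r^{n-1})\to1$, so in particular the normalized jump energy does \emph{not} go to zero — but this is not a problem: what I actually use is that if $G_0(u,\cost,\beta,B_\rho(x))$ is tiny then, since $2\beta\calH^{n-1}(J_u\cap B_\rho(x))\le G_0(u,\cost,\beta,B_\rho(x))<\vartheta_0\rho^{n-1}$, the jump-energy hypothesis $\calH^{n-1}(J_u\cap B_\rho(x))\le\eps\rho^{n-1}$ holds as soon as $\vartheta_0<2\beta\eps$. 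Likewise the near-minimality hypothesis holds: $\Phi_0(u,\cost,\beta,B_\rho(x))\ge G_0(u,\cost,\beta,B_\rho(x))-\Psi_0\ge G_0 - C\rho^n\ge (1-C\rho/\vartheta_0^{?})\,G_0$, which, for $\rho<R_0$ small, gives $G_0\le(1+\vartheta)\Phi_0$. (Here one has to be slightly careful: the cleaner route, following \cite{FonsecaFusco97}, is to first establish a quasi-minimality estimate $\Psi_0(u,\cost,\beta,B_r(y))\le C r^n$ valid uniformly for all $B_r(y)\subset\subset\Omega$ with $r<R_0$, and then feed it into the decay.)

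Once the decay property applies at scale $\rho$, it applies at scale $\tau\rho$ (the hypotheses propagate: the jump energy only decreases, and quasi-minimality at scale $\tau\rho$ follows from the uniform bound $\Psi_0(u,\cdot,\cdot,B_{\tau\rho}(x))\le C(\tau\rho)^n$), and so on. Iterating $k$ times yields
\[
G_0(u,\cost,\beta,B_{\tau^k\rho}(x))\le \tau^{k(n-\gamma)}G_0(u,\cost,\beta,B_\rho(x)),
\]
and since $2\beta\calH^{n-1}(J_u\cap B_{\tau^k\rho}(x))\le G_0(u,\cost,\beta,B_{\tau^k\rho}(x))$ this gives
\[
\frac{\calH^{n-1}(J_u\cap B_{\tau^k\rho}(x))}{(\tau^k\rho)^{n-1}}
\le \frac{1}{2\beta}\,\tau^{k(n-\gamma)}\,\frac{G_0(u,\cost,\beta,B_\rho(x))}{(\tau^k\rho)^{n-1}}
= \frac{1}{2\beta}\,\tau^{-k(1-\gamma)}\,\tau^{k(n-1)}\cdot\tau^{k\gamma}\cdots
\]
— more transparently, comparing exponents, $G_0(u,\cdot,\cdot,B_{\tau^k\rho}(x))/(\tau^k\rho)^{n-1}\le \tau^{k(1-\gamma)}\,G_0(u,\cdot,\cdot,B_\rho(x))/\rho^{n-1}\to0$ as $k\to\infty$ because $1-\gamma>0$. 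Hence the upper $(n-1)$-density of $J_u$ at $x$ vanishes, contradicting $x\in J_u^*$, where this density equals $1$. This yields the asserted $\vartheta_0,R_0$ (depending only on the constants of the decay property, hence only on $n,p,\C,\cost,\beta,\mu,\|g\|_{L^\infty}$), completing the proof.

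The main obstacle, and the step requiring genuine care rather than bookkeeping, is the passage from local minimality of $G$ to a \emph{quantitative} quasi-minimality of the homogeneous functional $G_0$ with deviation $\Psi_0(u,\cost,\beta,B_r(y))\le C r^n$ uniformly on small balls. This needs: (a) comparing $f_\mu(e(u))$ with $f_0(e(u))$ using $|f_\mu(\xi)-f_0(\xi)|\le C(1+|\xi|^{p-1})$ (or a similar lower-order estimate) together with integrability of $e(u)$; (b) controlling the fidelity terms $\cost\int_{B_r}|u-g|^p$ and $\cost\int_{B_r}|u|^p$, which are \emph{not} obviously of order $r^n$ since $u$ need not be bounded — one must instead absorb them by noting that in the definition of $\Psi_0$ one only compares with competitors $v$ with $\{v\ne u\}\subset\subset B_r$, so the fidelity contributions of $u$ and $v$ on $B_r$ differ only through the behavior on $B_r$, and by choosing $v$ cleverly (e.g.\ freezing $u$ near $\partial B_r$ and Sobolev-extending inside) one makes the difference $O(r^n)$ plus terms controlled by the already-small $G_0(u,\cdot,\cdot,B_r)$ itself; (c) ensuring all constants are uniform in the center $y$, which follows since $g\in L^\infty$ and the structural constants of $f_\mu$ are fixed. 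Modulo this quasi-minimality lemma — which is exactly the point where the absence of truncation in the vectorial setting, flagged in the text before Definition \ref{d:decay}, bites — the rest is the classical Fonseca–Fusco iteration.
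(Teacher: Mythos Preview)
Your overall strategy is the same as the paper's: assume $G_0$ is small at some scale, verify the hypotheses of the decay property, iterate on the dyadic scales $\rho_k=\tau^k\rho$, and deduce that the $(n-1)$-density of $J_u$ at $x$ is strictly below $1$, contradicting $x\in J_u^*$. The reduction of the jump-smallness hypothesis to $\vartheta_0<2\beta\eps$ is exactly right.

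The gap is in your treatment of quasi-minimality. The uniform bound $\Psi_0(u,\kappa,\beta,B_r)\le Cr^n$ is \emph{not} available. Comparing $G$ and $G_0$ via the elementary inequalities $|z+\zeta|^p\le(1+\sigma)|z|^p+k_\sigma|\zeta|^p$ and $f_0-\mu^{p/2}\le f_\mu\le(1+\sigma)f_0+k_\sigma\mu^{p/2}$ one gets only
\[
G_0(u,\kappa,\beta,B_r)\le(1+\sigma)^2\,\Phi_0(u,\kappa,\beta,B_r)+C_\sigma r^n,
\]
i.e.\ a \emph{multiplicative} error on top of the additive one (for $p>2$ the difference $f_\mu-f_0$ is genuinely unbounded, so the purely additive claim fails). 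To extract $G_0\le(1+\vartheta)\Phi_0$ from this you would need $\Phi_0\gtrsim r^n$, which is not automatic along the iteration: nothing prevents $\Phi_0$ from being $o(r^n)$ at some scale, and then the hypothesis of the decay property simply fails. Your sentence ``quasi-minimality at scale $\tau\rho$ follows from the uniform bound $\Psi_0\le C(\tau\rho)^n$'' therefore does not go through as stated; the hedged version you give at the end (``plus terms controlled by the already-small $G_0$ itself'') is the correct inequality, but you never explain how to run the iteration once the error is partly multiplicative.

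The paper closes this gap by tracking $G$ (not $G_0$) through the iteration and introducing a dichotomy at each scale: either $G(u,\kappa,\beta,B_{\tau\rho})<\rho^{n-1/4}$, in which case the decay conclusion holds trivially for small $\rho$, or $G(u,\kappa,\beta,B_{\tau\rho})\ge\rho^{n-1/4}$, in which case every $O(\rho^n)$ remainder is dominated by $\rho^{1/4}$ times the energy itself and can be absorbed, yielding $G_0\le(1+\vartheta)\Phi_0$ and hence the decay. This two-case split (equations (3.8)--(3.9) and the argument around (3.14)--(3.18) in the paper) is precisely the missing ingredient in your proposal; once it is in place, the induction in your third paragraph and the final density contradiction go through exactly as you describe.
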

\begin{proof}
 {Let us first assume that $x\in J_u^*$.}

\noindent\emph{Step 1.} 
{We choose $\gamma=1/4$ in the decay property (Definition~\ref{d:decay})}
and choose $\tau\in(0,2^{-16}\wedge\tau_{\sfrac 14})$,
{with $\tau_{\sfrac 14}$ as in Definition~\ref{d:decay}.}
Let
{$\eps:={\omega_{n-1}}\wedge\eps(\tau)$}, where $\eps(\tau)\in(0,1)$,
 $\vartheta{=\vartheta(\tau)}\in(0,1)$, 
and $R{=R(\tau)}>0$, are as in the decay property.

We {claim} that there exists a radius $R_1=R_1{({n,}\tau,\mu,p,\|g\|_{L^\infty(\Omega;\R^{n})})}>0$ 
such that if
\be{\label{e:s0}
G(u,\cost,\beta,B_\rho(x))< \beta\,\eps\rho^{n-1}
}
for some  $0<\rho<R_1$, 
then one of the following inequalities holds
\ba{
&G(u,\cost,\beta,B_{\tau\rho}(x))<\tau^{n-1}\rho^{n-\sfrac12},\label{e:s1}\\
&G(u,\cost,\beta,B_{\tau\rho}(x))<\tau^{n-\sfrac12}G(u,\cost,\beta,B_\rho(x)).\label{e:s2}
}
We distinguish two cases. If 
\be{\label{e:s3}
G(u,\cost,\beta,B_{\tau\rho}(x))<\rho^{n-\sfrac14},
}
then \eqref{e:s1} holds provided we choose $R_1\le\tau^{4(n-1)}$.

To deal with the remaining case we state two elementary inequalities: for any $\sigma>0$ 
there is ${k_\sigma}>1$ (implicitly depending also on $p$) such that

\begin{equation}\label{e:easy}
|z+\zeta|^p\leq (1+\sigma)|z|^p+k_{\sigma}|\zeta|^p\qquad
\text{for all $z,\,\zeta\in\R^{n}$}
\end{equation}
and
\begin{equation}\label{eqf0fmu}
 f_0(\xi)-\parametermu^{p/2}\le f_\parametermu(\xi)\le (1+\sigma) f_0(\xi)+k_\sigma\parametermu^{p/2}
 \qquad\text{ for all $\xi\in \R^{n\times n}$}.
\end{equation}
Using \eqref{eqf0fmu} and \eqref{e:easy} with $\sigma=1$, and the fact that 
$g\in L^\infty(\Omega;\R^{n})$, we get 
\begin{align*}
 G(u,\cost,\beta, B_{\tau\rho}(x)) \le 2 G_0(u,\cost,\beta, B_{\tau\rho}(x)) 
 +k_1(\parametermu^{p/2}+\|g\|^p_{L^\infty(\Omega;\R^2)})\rho{^n\omega_n}\,.
\end{align*}
Since  \eqref{e:s3} does not hold, choosing ${R_1\leq R}$ such that 
\begin{equation}\label{e:R11}
{8\omega_n k_1(\parametermu^{p/2}+\|g\|^p_{L^\infty(\Omega;\R^n)}) R_1^{1/4}\le 1}
\end{equation}
we obtain
\begin{equation}\label{e:treG0}
 G(u,\cost,\beta, B_{\tau\rho}(x)) \le 4 G_0(u,\cost,\beta, 
 B_{\tau\rho}(x)).
\end{equation}
Suppose now that
\begin{equation}\label{F0phi0}
G_0(u,\cost,\beta,B_{\rho}(x))\leq(1+\vartheta)\Phi_0(u,\cost,\beta,B_{\rho}(x)).
\end{equation}
Then, by \eqref{e:s0} and \eqref{F0phi0} the decay property,
 \eqref{eqf0fmu}, \eqref{e:treG0} and  $g\in L^\infty(\Omega;\R^{n})$ yield 
\begin{align*}
 G(u,\cost,\beta, B_{\tau\rho}(x)) & \le 4 \tau^{n-\sfrac14} G_0(u,\cost,\beta, B_{\rho}(x))\\
& \le 8\tau^{n-\sfrac14} G(u,\cost,\beta, B_\rho(x)) + 4{\omega_n}
(\parametermu^{\sfrac p2}+k_1\|g\|^p_{L^\infty(\Omega;\R^{n})})\rho^n.
\end{align*}
Since \eqref{e:s3} is not satisfied, as above we can absorb the last term in the left-hand side by taking into account the condition
in \eqref{e:R11} to obtain
\[
 G(u,\cost,\beta, B_{\tau\rho}(x))  \le 16\tau^{n-\sfrac14}G(u,\cost,\beta, B_\rho(x)) .
\]
The proof of \eqref{e:s2} is concluded since $16\tau^{\sfrac 14}<1$.

Hence, we are left with proving \eqref{F0phi0} assuming that \eqref{e:s3} is violated.
To this aim we first fix $\sigma={\sigma(\tau)}\in(0,\sfrac 12)$ such that 
\be{\label{e:sigma}
(1+2\sigma)^2=1+\vartheta.
}
By \eqref{e:easy}, \eqref{eqf0fmu} and $g\in L^\infty(\Omega;\R^{n})$ we obtain
\begin{multline}\label{e:G0G}
G_0(u,\cost,\beta, B_\rho)\le (1+\sigma) G(u,\cost,\beta, B_\rho)
 +{\omega_n}(\parametermu^{\sfrac p2}+{k_\sigma}\|g\|^p_{L^\infty(\Omega;\R^{n})})\rho^{n}
 \\
 \le  (1+2\sigma)G(u,\cost,\beta, B_\rho),
\end{multline}
provided 
\begin{equation}\label{e:R12}
{\omega_n}(\parametermu^{\sfrac p2}+{k_\sigma}\|g\|^p_{L^\infty(\Omega;\R^{n})}) 
R_1^{\sfrac 14}\leq \sigma.
\end{equation}
Now, for any field $v\in GSBD^p(\Omega)$ with {$\{v\ne u\}\subset\hskip-0.125cm\subset B_\rho$}, 
being $u$ a local minimizer of $G$, \eqref{e:easy} and \eqref{eqf0fmu} give
\begin{multline*} 
G(u,\cost,\beta, B_\rho)\leq G(v,\cost,\beta, B_\rho)\\
 \leq  (1+\sigma)G_0(v,\cost,\beta, B_\rho)+ {\omega_n}
 {k_\sigma}({\parametermu^{p/2}}+\|g\|^p_{L^\infty(\Omega;\R^{n})})\rho^{n},
\end{multline*}
as \eqref{e:s3} is violated, we infer
\begin{equation}\label{e:G0G2}
\big(1-{\omega_n} {k_\sigma}({\parametermu^{p/2}}+\|g\|^p_{L^\infty(\Omega;\R^{n})}) R_1^{\sfrac 14}\big)
G(u,\cost,\beta, B_\rho)\leq (1+\sigma)G_0(v,\cost,\beta, B_\rho).
\end{equation}
{We choose $R_1\in(0,R)$ such that  \eqref{e:R11}, \eqref{e:R12} and
\begin{equation}\label{e:R13}
\frac{1+\sigma}{1+2\sigma}\leq {{1-{\omega_n}\, {k_\sigma}({\parametermu^{p/2}}+\|g\|^p_{L^\infty(\Omega;\R^{n})})
 R_1^{\sfrac 14}}}
\end{equation}
are satisfied. Then \eqref{e:G0G2} becomes $G(u,\cost,\beta, B_\rho)\leq (1+2\sigma)G_0(v,\cost,\beta, B_\rho)$, so that 
 recalling \eqref{e:G0G}  and the 
choice of $\sigma\in(0,\sfrac 12)$ made {in \eqref{e:sigma}}, we get}
\[
{G_0}(u,\cost,\beta, B_\rho)\leq (1+\vartheta)G_0(v,\cost,\beta, B_\rho).
\]
We finally deduce \eqref{F0phi0} from the latter inequality by taking the infimum on the
class of admissible $v$ introduced above (cp. the definition of $\Phi_0$ in \eqref{e:phi0}).
\smallskip

\noindent\emph{Step 2.} Fix $R_2>0$ such that 
\be{\label{e:rho2}
R_2<R_1\wedge(\beta\,\eps)^2\wedge \tau,
}
where ${0<R_1\leq R}$ satisfies \eqref{e:R11}, \eqref{e:R12} and \eqref{e:R13}.
For any $\rho<R_2$ set $\rho_i:=\tau^i\rho$, $i\in \N$. Let us show by 
induction that \eqref{e:s0} implies for all $i\in \N$ 
\begin{equation}\label{e:s7}
G(u,\cost,\beta,B_{\rho_i}(x))<\beta\,\eps\rho_i^{{n-1}}.
\end{equation}
The first inductive step $i=0$ is exactly \eqref{e:s0}. 
Suppose now that \eqref{e:s7} holds for some $i$, then by Step~1 either \eqref{e:s1} 
or \eqref{e:s2} holds. In the former case by \eqref{e:rho2} we have
\[
G(u,\cost,\beta,B_{\rho_{i+1}}(x))<\tau^{n-1}\rho_i^{{n-\sfrac12}}=\rho_i^{\sfrac 12}\rho_{i+1}^{{n-1}}<
\beta\,\eps\rho_{i+1}^{{n-1}}.
\]
Instead, in the second instance by the inductive assumption we infer, {since $\tau\le 1$},
\[
G(u,\cost,\beta,B_{\rho_{i+1}}(x))<\tau^{{n-\sfrac12}}G(u,\cost,\beta,B_{\rho_{i}}(x))
{<}\tau^{{n-\sfrac12}}\beta\,\eps\rho_i^{{n-1}}<
\beta\,\eps\rho_{i+1}^{{n-1}}.
\]
\smallskip

\noindent\emph{Step 3.} 
Let $\sigma={\sigma(\tau)}>0$ be as in \eqref{e:sigma}, and fix $R_0>0$ such that 
$R_0\leq\frac{\beta\eps}{{2}{\omega_n {k_\sigma}
({\parametermu^{p/2}}+\|g\|^p_{L^\infty(\Omega;\R^n)})}}
\wedge R_2$, with $R_2$ defined in \eqref{e:rho2}.
 We claim that for all $\rho\in(0,R_0)$
\be{\label{e:s9}
G_0(u,\cost,\beta,B_{\rho}(x))\geq \vartheta_0\,\rho^{{n-1}}
}
with $\vartheta_0:=\frac{\beta\eps}{{2}({1+\sigma})}$. 

By contradiction, if \eqref{e:s9} does not hold, we find by \eqref{e:easy}, \eqref{eqf0fmu}, and since $\rho<R_0$
\begin{multline*}
G(u,\cost,\beta,B_{\rho}(x))\\ \leq (1+\sigma) G_0(u,\cost,\beta,B_{\rho}(x))
+{\omega_n} {k_\sigma}({\parametermu^{p/2}}+\|g\|^p_{L^\infty(\Omega;\R^{n})})\rho^{n}
<{\beta}\eps\rho^{{n-1}}.
\end{multline*}
Hence \eqref{e:s0} holds true,
and therefore by Step 2 inequality \eqref{e:s7} yields 
\[
\liminf_{\rho\to0}\frac{1}{{\rho^{n-1}}}{G}(u,\cost,\beta,B_{\rho}(x))\leq
{\beta}\eps,
\]
in turn implying
\[
\liminf_{\rho\to0} \frac{\calH^{n-1}(J_u\cap B_{\rho}(x))}{{\omega_{n-1}\rho^{n-1}}}\leq {\frac{\eps}{2\omega_{n-1}}<1}
\]
{by the definition of $\eps$, so contradicting \eqref{e:densone}}.  
{This concludes the proof of (\ref{e:s9}) {for points in $J_u^*$}. 

{Finally, since} the definitions of $R_0$ and 
$\vartheta_0$ are independent of the particular point $x\in J_u^\ast$, \eqref{e:s9}
readily extends to {$\Omega\cap \overline{J_u^\ast}$} and  (\ref{e:lb}) is proven.}
\end{proof}

The density lower bound for the jump term of the energy follows straightforwardly.
\begin{corollary}[{Density lower bound for the jump}]\label{c:dlbjump}
{Under the same assumptions as Lemma~\ref{l:lowbound}, there 
exist }
$\vartheta_1$ and $R_1$, depending only on {$n$}, $p$, $\mathbb{C}$, $\cost$, $\beta$, 
$\parametermu$, and $\|g\|_{L^\infty(\Omega;\R^{n})}$, 
such that if $0<\rho<R_1$, {$x\in\Omega\cap \overline{J_u^*}$,} 
and $B_\rho(x)\subset\hskip-0.125cm\subset\Omega$, then 
\be{\label{e:dlbjump}
\calH^{n-1}(J_u\cap B_\rho(x))\geq \vartheta_1\rho^{n-1}.}
\end{corollary}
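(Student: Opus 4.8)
The plan is to argue by contradiction, using Lemma~\ref{l:lowbound}, the decay property (Lemma~\ref{lemmadecay}) and a uniform upper density estimate, all combined with the $W^{1,p}$-approximation of Proposition~\ref{prop:ricopr}. Since $A\mapsto\calH^{n-1}(J_u\cap A)$ is monotone under inclusion, it suffices to prove \eqref{e:dlbjump} for $x\in J_u^*$ with constants depending only on the stated quantities, the extension to $\Omega\cap\overline{J_u^*}$ being then immediate by the same monotonicity argument used at the end of the proof of Lemma~\ref{l:lowbound}. Fix therefore $x\in J_u^*$ and suppose, for some $\rho<R_1$ and a small $\vartheta_1$ still to be chosen, that $\calH^{n-1}(J_u\cap B_\rho(x))<\vartheta_1\rho^{n-1}$. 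I would first record, straight from the proof of Lemma~\ref{l:lowbound}, that for every $x\in\Omega\cap\overline{J_u^*}$ and every small radius $r$ one has $G(u,\cost,\beta,B_r(x))\ge\beta\eps\,r^{n-1}$ (this is the negation of \eqref{e:s0}, established in Step~3 of that proof); hence $G(u,\cost,\beta,B_{\tau r}(x))\ge\beta\eps\tau^{n-1}r^{n-1}\ge r^{n-1/4}$ for $r$ small, so that \eqref{e:s3} fails and Step~1 of the same proof yields the near-minimality $G_0(u,\cost,\beta,B_r(x))\le(1+\vartheta)\Phi_0(u,\cost,\beta,B_r(x))$. Thus, for $r$ below a threshold depending only on $n,p,\mathbb C,\cost,\beta,\parametermu,\|g\|_{L^\infty}$ and on the chosen $\tau$, the hypotheses of the decay property of Definition~\ref{d:decay} are met at $B_r(x)$ as soon as $\calH^{n-1}(J_u\cap B_r(x))\le\eps\,r^{n-1}$.

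Second, I would establish a uniform upper density bound $G_0(u,\cost,\beta,B_r(x))\le c_0\,r^{n-1}$ for $B_r(x)\subset\subset\Omega$ with $r$ small and $\calH^{n-1}(J_u\cap B_r(x))\le\eps\,r^{n-1}$, with $c_0$ depending only on the usual quantities. This is obtained by comparison: by Proposition~\ref{prop:ricopr} (applied after an $SBV^p\cap L^\infty$ approximation from Theorem~\ref{flaviana}), when the jump in $B_r(x)$ is small, $u$ differs, off a set of measure $\lesssim r\,\calH^{n-1}(J_u\cap B_r(x))$, from a $W^{1,p}$ field and --- using Korn's and Poincar\'e's inequalities together with the rigidity bound \eqref{e:korn} --- from a single affine map $a$ with $e(a)=0$, within $c\,r\big(\int_{B_r(x)}|e(u)|^p\big)^{1/p}$ in $L^p(B_{r/2}(x))$. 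Gluing $u$ to $a$ across a suitably chosen thin shell on which $u$ is close to $a$, and invoking local minimality of $u$ for $G$, then bounds $G(u,\cost,\beta,B_{r/2}(x))$, hence $G_0$, by $c\,r^{n-1}+c(\parametermu^{p/2}+\cost\|g\|_{L^\infty}^p)r^n$ plus terms that are absorbed for $r$ small; the control of the fidelity contribution $\cost\int|\cdot-g|^p$ on the bad set, where $u$ carries no a priori $L^\infty$ bound, is achieved via Lemma~\ref{affine}, passing from $\|a\|_{L^1(B_{r/2}(x)\setminus(\text{bad set}))}$ to $\|a\|_{L^\infty(B_{r/2}(x))}$. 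This fidelity estimate is the only genuinely delicate point of the whole argument, and it is exactly where the absence of truncations in $GSBD^p$ has to be worked around.

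Finally, I would iterate. Assuming $\vartheta_1\le\eps$, the decay property applies at $B_\rho(x)$ and gives $G_0(u,\cost,\beta,B_{\tau\rho}(x))\le\tau^{n-\gamma}G_0(u,\cost,\beta,B_\rho(x))$; by the upper density bound $G_0(u,\cost,\beta,B_\rho(x))\le c_0\rho^{n-1}$ and since $\gamma<1$, the density $r\mapsto r^{1-n}G_0(u,\cost,\beta,B_r(x))$ contracts by the fixed factor $\tau^{1-\gamma}<1$ at each step. In particular $\calH^{n-1}(J_u\cap B_{\tau^k\rho}(x))\le\tfrac1{2\beta}G_0(u,\cost,\beta,B_{\tau^k\rho}(x))\le\tfrac{c_0}{2\beta}\tau^{k(1-\gamma)}(\tau^k\rho)^{n-1}$, which is $\le\eps(\tau^k\rho)^{n-1}$ for every $k\ge0$ once $\tau$ is small enough (depending only on the usual quantities, through $c_0$ and $\eps$), so the decay property applies at every scale $\tau^k\rho$; by induction $G_0(u,\cost,\beta,B_{\tau^k\rho}(x))\le\tau^{k(n-\gamma)}G_0(u,\cost,\beta,B_\rho(x))$, whence $(\tau^k\rho)^{1-n}G_0(u,\cost,\beta,B_{\tau^k\rho}(x))\to0$ as $k\to\infty$. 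This contradicts the lower bound $G_0(u,\cost,\beta,B_{\tau^k\rho}(x))\ge\vartheta_0(\tau^k\rho)^{n-1}$ of Lemma~\ref{l:lowbound}, and proves \eqref{e:dlbjump} for $x\in J_u^*$, hence for all $x\in\Omega\cap\overline{J_u^*}$. The expected main obstacle is precisely the fidelity-term estimate in the upper density bound described above.
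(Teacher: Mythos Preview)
Your overall strategy---contradict the lower bound of Lemma~\ref{l:lowbound} by combining an upper density bound for $G_0$ with the decay property---is the right one, and matches the paper's. However, two of your three steps are substantially more complicated than necessary, and the third introduces a circularity.

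\emph{Upper bound.} The paper obtains $G_0(u,\cost,\beta,B_\rho(x))\le c_*\rho^{n-1}$ by the trivial competitor $u\chi_{B_\rho(x)\setminus B_{\rho-\delta}(x)}$: setting $u=0$ on $B_{\rho-\delta}(x)$ creates at most the sphere $\partial B_{\rho-\delta}(x)$ as new jump, zero elastic energy, and fidelity $\cost\int_{B_{\rho-\delta}}|g|^p\le \cost\|g\|_{L^\infty}^p\omega_n\rho^n$. Letting $\delta\downarrow0$ and converting $G$ to $G_0$ via \eqref{eqf0fmu} gives the bound. There is no need for Proposition~\ref{prop:ricopr}, Theorem~\ref{flaviana}, Korn, gluing, or Lemma~\ref{affine}; the fidelity term is in fact the easiest part here, not the ``only genuinely delicate point.'' Note also that this bound holds unconditionally, not only when the jump is small.

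\emph{Single decay step vs.\ iteration.} Once $\vartheta_0\rho^{n-1}\le G_0(u,\cost,\beta,B_\rho(x))\le c_*\rho^{n-1}$ is in hand, one application of the decay property at a $\bar\rho$ where \eqref{eq:stimasalbasso} fails yields
\[
\vartheta_0(\tau\bar\rho)^{n-1}\le G_0(u,\cost,\beta,B_{\tau\bar\rho}(x))\le \tau^{n-\gamma}G_0(u,\cost,\beta,B_{\bar\rho}(x))\le c_*\tau^{n-\gamma}\bar\rho^{n-1},
\]
hence $\vartheta_0\le c_*\tau^{1-\gamma}$, a contradiction if $\tau$ is chosen with $c_*\tau^{1-\gamma}<\vartheta_0$. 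No iteration is needed. Your iteration, besides being unnecessary, has a genuine gap: to propagate the jump-smallness hypothesis to scale $\tau\rho$ you need $\tfrac{c_0}{2\beta}\tau^{1-\gamma}\le\eps(\tau)$, but $\eps(\tau)$ arises from a compactness--contradiction argument and no lower bound on it (as $\tau\to0$) is available from Definition~\ref{d:decay}. This is easily fixed by stopping after one step, which is precisely what the paper does.
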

\begin{proof}
Let {$x\in\Omega\cap \overline{J_u^*}$} and $B_\rho(x)\subset\hskip-0.125cm\subset\Omega$. 
{Denoting by $\vartheta_0$ and $R_0$ the constants in Lemma~\ref{l:lowbound},}
if $\rho\in(0,R_0]$ we have both
\begin{equation}\label{e:stimabasso}
 G_0(u,\cost,\beta,B_\rho(x))\geq\vartheta_0\rho^{n-1}
\end{equation}
by Lemma~\ref{l:lowbound} {itself}, {and the energy upper bound
\[
 G(u,\cost,\beta,B_\rho(x))\leq {2}n\omega_n\beta\rho^{n-1}+\omega_n\cost\|g\|^p_{L^\infty(B_\rho(x);\R^n)}\rho^n.
\]
The latter easily follows by the local minimality of $u$ and comparing its energy with that
of $u\chi_{ B_\rho(x)\setminus B_{\rho-\delta}(x)}$ and then letting $\delta\downarrow 0$.
Moreover, by taking into account the first inequality in 
\eqref{eqf0fmu}, we have that
\[
 G_0(u,\cost,\beta,B_\rho(x))\leq 2^{p-1}G(u,\cost,\beta,B_\rho(x))
 +\omega_n\rho^n\big(\mu^{\sfrac p2}+2^{p-1}\cost\,\|g\|^p_{L^\infty(B_\rho(x);\R^n)}\big).
\]
Hence, {for all} $\rho\in(0,1\wedge R_0]$ we conclude that 
\begin{align}\label{e:easy2}
 G_0(u,\cost,\beta,B_\rho(x))\leq & 
 {2^{p}}n\omega_n\beta\rho^{n-1}\notag
 \\&+\omega_n\big(\mu^{\sfrac p2}+2^{p}\cost\,
 \|g\|^p_{L^\infty(B_\rho(x);\R^n)}\big)\rho^n
 \leq {c_*} \rho^{n-1},
\end{align}
where ${c_*}$ depends on  $n$, $p$, $\cost$, $\beta$, $\parametermu$, and 
$\|g\|_{L^\infty(\Omega;\R^n)}$.}

{We fix $\gamma\in(0,1)$, for example $\gamma=1/4$ as above, and choose}
$\tau\in(0,\tau_\gamma]$ 
{in the decay property} such that ${c_*}\,\tau^{1-\gamma}<\vartheta_0$.
{Let  $\varepsilon=\varepsilon(\tau)>0$, $\vartheta=\vartheta(\tau)$  and 
$R=R(\tau)>0$ be the constants provided by  the decay property. We now show that}
\begin{equation}\label{eq:stimasalbasso}
\calH^{n-1}(J_u\cap B_\rho(x))>\varepsilon \rho^{n-1}
\end{equation}
for all $\rho\in(0,R_1]$, with $R_1:=1\wedge R_0\wedge R$.
Indeed, \text{arguing as in \eqref{e:sigma}-\eqref{e:R13}} from  \eqref{e:stimabasso} we deduce that for $\rho\leq R_1$ 
\[
G_0(u,\cost,\beta,B_{{\rho}}(x))\leq(1+\vartheta)\Phi_0(u,\cost,\beta, B_{{\rho}}(x)).
\]
If \eqref{eq:stimasalbasso} were false we would conclude {using (\ref{e:stimabasso}), the decay property} {and \eqref{e:easy2}}
for some $\bar{\rho}\in(0,R_1]$ that
\begin{align*}
 \vartheta_0{(\tau \bar{\rho})^{n-1}} &\leq G_0(u,\cost,\beta,B_{\tau \bar{\rho}}(x))\\
 &\leq {\tau^{n-\gamma}}G_0(u,\cost,\beta,B_{\bar{\rho}}(x))
 \leq { c_*\,\tau^{n-\gamma}}\,\overline{\rho}^{n-1},
\end{align*}
contradicting the choice of $\tau$.
\end{proof}

\begin{corollary}\label{lemmaomegauopen}
{Under the same assumptions as  Lemma~\ref{l:lowbound},
the set
\[
\Omega_u:=\{x\in\Omega:\,G_0(u,\cost,\beta,B_\rho(x))<\vartheta_0\rho^{n-1}\quad\text{for some $\rho\in(0,R_0 {\wedge \dist(x,\partial\Omega)})$}\}                                
\]
is open and obeys
 $\Omega_u\cap \overline{J_u^\ast}=\emptyset$. {Moreover,
 $\calH^{n-1}(\Omega_u\cap J_u)=0$.}}
\end{corollary}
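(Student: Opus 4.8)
The plan is to deduce all three assertions from Lemma~\ref{l:lowbound} (and, implicitly, Corollary~\ref{c:dlbjump}) together with elementary measure theory, so that the corollary needs essentially no new idea; the only point requiring a bit of care is the openness statement. First I would record that the set function $A\mapsto G_0(u,\cost,\beta,A)$ is the restriction to Borel subsets of $\Omega$ of the finite Radon measure $\mu:=\big(f_0(e(u))+\cost|u|^p\big)\calL^n+2\beta\,\calH^{n-1}\res J_u$: indeed $e(u)\in L^p$ and $\calH^{n-1}(J_u)<\infty$ because $u\in GSBD^p(\Omega)$, while (this being relevant only if $\cost>0$) finiteness of $G(u,\cost,\beta,\Omega)$ together with $g\in L^\infty(\Omega;\R^n)$ forces $u\in L^p(\Omega;\R^n)$. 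In particular $G_0(u,\cost,\beta,B_\rho(x))=\mu(B_\rho(x))$ for every admissible ball.

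For the openness of $\Omega_u$ I would fix $x\in\Omega_u$ with a witness radius $\rho_0\in(0,R_0\wedge\dist(x,\partial\Omega))$, so $\mu(B_{\rho_0}(x))<\vartheta_0\rho_0^{n-1}$. Since $\mu$ is finite, only countably many spheres $\partial B_r(x)$ carry positive $\mu$-mass, and since $r\mapsto\mu(B_r(x))$ is nondecreasing and left-continuous I can choose $r<\rho_0$ with $\mu(\partial B_r(x))=0$ and $\mu(B_r(x))<\vartheta_0 r^{n-1}$, so that $r$ is again a witness radius for $x$. By continuity from above, $\lim_{\delta\downarrow0}\mu(B_{r+2\delta}(x))=\mu(\overline{B_r(x)})=\mu(B_r(x))<\vartheta_0 r^{n-1}$, hence there is $\eta>0$ with $r+2\eta<R_0\wedge\dist(x,\partial\Omega)$ and $\mu(B_{r+2\eta}(x))<\vartheta_0(r+\eta)^{n-1}$. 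Then for every $y\in B_\eta(x)$ one has $B_{r+\eta}(y)\subset B_{r+2\eta}(x)$ and $r+\eta<R_0\wedge\dist(y,\partial\Omega)$, so $G_0(u,\cost,\beta,B_{r+\eta}(y))\le\mu(B_{r+2\eta}(x))<\vartheta_0(r+\eta)^{n-1}$, i.e.\ $y\in\Omega_u$; therefore $B_\eta(x)\subset\Omega_u$.

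The relation $\Omega_u\cap\overline{J_u^\ast}=\emptyset$ I would obtain immediately from Lemma~\ref{l:lowbound}: if $x\in\Omega\cap\overline{J_u^\ast}$, then for every $\rho\in(0,R_0\wedge\dist(x,\partial\Omega))$ the ball $B_\rho(x)$ is compactly contained in $\Omega$ with $0<\rho<R_0$, so \eqref{e:lb} gives $G_0(u,\cost,\beta,B_\rho(x))\ge\vartheta_0\rho^{n-1}$, which contradicts the strict inequality in the definition of $\Omega_u$; hence $x\notin\Omega_u$.

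Finally, for $\calH^{n-1}(\Omega_u\cap J_u)=0$ I would invoke that $J_u$ is countably $(\calH^{n-1},n-1)$-rectifiable with $\calH^{n-1}(J_u)<\infty$, so that by the density theorem for rectifiable sets $\calH^{n-1}$-a.e.\ point of $J_u$ lies in $J_u^\ast$, i.e.\ $\calH^{n-1}(J_u\setminus J_u^\ast)=0$. Since $\Omega_u\cap J_u^\ast\subset\Omega_u\cap\overline{J_u^\ast}=\emptyset$ by the previous step, we get $\Omega_u\cap J_u\subset J_u\setminus J_u^\ast$, whence $\calH^{n-1}(\Omega_u\cap J_u)\le\calH^{n-1}(J_u\setminus J_u^\ast)=0$. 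The main obstacle — really the only place where one must be attentive — is the measure-theoretic bookkeeping with spheres of positive $\mu$-mass in the openness argument; everything else is a direct consequence of the density lower bounds already established.
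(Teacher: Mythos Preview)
Your proof is correct and, for the second and third assertions, essentially identical to the paper's. The one place where you diverge is the openness argument: you \emph{enlarge} the witness ball (passing from $r$ to $r+\eta$) and must therefore invoke continuity from above of $\mu$ and handle spheres of positive $\mu$-mass. The paper instead \emph{shrinks} the witness ball: from $G_0(u,\cost,\beta,B_\rho(x))<\vartheta_0\rho^{n-1}$ it picks $\delta\in(0,\rho)$ so small that still $G_0(u,\cost,\beta,B_\rho(x))<\vartheta_0(\rho-\delta)^{n-1}$, and then for $y\in B_\delta(x)$ the inclusion $B_{\rho-\delta}(y)\subset B_\rho(x)$ and plain monotonicity of $G_0$ give $y\in\Omega_u$ directly. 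This sidesteps entirely the ``bookkeeping with spheres of positive $\mu$-mass'' that you flag as the main obstacle; your route is sound, but the shrinking trick is cleaner. A minor point: you assert $\mu$ is a finite Radon measure on $\Omega$, but what you actually use---and what follows from $u\in GSBD^p(\Omega)$ together with the local energy upper bound coming from local minimality (cf.\ the proof of Corollary~\ref{c:dlbjump})---is only local finiteness, which suffices for your argument.
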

\begin{proof}
Let $x\in \Omega_u$. Then there is $\rho\in(0,R_0)$ with $G_0(u,\cost,\beta,B_\rho(x))<\vartheta_0\rho^{n-1}$, and therefore
there is $\delta\in(0,\rho)$ such that 
\[
 G_0(u,\cost,\beta,B_\rho(x))<\vartheta_0(\rho-\delta)^{n-1}.
\]
The inclusion $B_\delta(x)\subset\Omega_u$ follows straightforwardly. Indeed, let $y\in B_\delta(x)$, we have
\begin{equation*}
 G_0(u,\cost,\beta,B_{\rho-\delta}(y))\leq G_0(u,\cost,\beta,B_\rho(x))
 <\vartheta_0(\rho-\delta)^{n-1}.
\end{equation*}
Therefore $\Omega_u$ is open.

 By Lemma~\ref{l:lowbound} and the definition we immediately obtain 
 $\Omega_u\cap \overline{J_u^\ast}{=\emptyset}$.
 
Since $\calH^{n-1}(J_u\setminus J_u^\ast)=0$, by the ${(n-1)}$-rectifiability 
of $J_u$, and $\Omega_u\cap J_u^\ast=\emptyset$,
we infer that $\calH^{n-1}(\Omega_u\cap J_u)=0$.
\end{proof}
{In dimension $2$ the assumptions of Lemma~\ref{l:lowbound} hold true {and Sobolev minimizers are regular everywhere},
therefore we may conclude the following result.}
{\begin{theorem}\label{t:theogsbdpmin}
 Let $n=2$, $\Omega\subset\R^2$ open, $p\in(1,\infty)$, $\kappa\ge0$, 
 $\beta>0$, $\mu\ge0$, {$g\in L^\infty(\Omega;\R^2)$ if $p\in(1,2]$
 and $g\in W^{1,p}(\Omega;\R^2)$ if $p>2$.}

Let $u\in GSBD^p(\Omega)$ be a local minimizer of $G$ {according to \eqref{e:Glocmin}, then} $\Omega\cap\overline{S_u}=
\Omega\cap\overline{J_u}=\Omega\setminus\Omega_u$,
 \be{\label{e:saltochiuso}
{\calH^{1}(\Omega\cap{\overline{J_u}\setminus J_u})=0}
}
{and $u\in 
C^1(\Omega\setminus\overline{J_u};\R^2)$.} 
\end{theorem}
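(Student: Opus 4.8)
The plan is to carry out the concluding step of the De Giorgi--Carriero--Leaci scheme on top of the density lower bounds already proven: starting from Lemma~\ref{l:lowbound} and Corollary~\ref{c:dlbjump}, first identify $\Omega\setminus\Omega_u$ with $\Omega\cap\overline{J_u^\ast}$, and hence with $\Omega\cap\overline{J_u}$ and $\Omega\cap\overline{S_u}$; then deduce \eqref{e:saltochiuso} from a density comparison for Radon measures; and finally upgrade $u$ on the open good set $\Omega_u$ via the elliptic regularity of Theorem~\ref{t:regularity}. The two points that are not pure bookkeeping, and where the restriction $n=2$ genuinely enters, are: passing from ``$\calH^1(J_u)=0$ on an open set'' to ``$u\in W^{1,p}_\loc$ there'', which in the symmetrized-gradient framework has no truncation/chain-rule shortcut and must go through the slicing definition of $GSBD^p$ together with Korn's $L^p$ inequality (and, if one prefers, the emptiness of the family $\calF$ in Proposition~\ref{prop:ricopr} once one knows $u\in SBD^p_\loc$); and the use of the \emph{everywhere} $C^{1,\alpha}$-regularity of planar Sobolev minimizers of $f_\parametermu$-type functionals provided by Theorem~\ref{t:regularity}, which for $n\ge3$, $p\ne2$ is only available in its partial form. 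I expect the first of these to be the main obstacle.

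\emph{Regularity on the good set.} First I would note that the restriction of $u$ to any open $U\subset\hskip-0.125cm\subset\Omega$ is again a local minimizer of $G(\cdot,\cost,\beta,U)$, by additivity of $G$ in its last argument and by extending competitors with $u$. Next, whenever $U$ is open with $\calH^1(J_u\cap U)=0$, one has that $u$ agrees $\calL^2$-a.e.\ on $U$ with a map in $W^{1,p}_\loc(U;\R^2)$: by the structure theorem for $1$-rectifiable sets, $\calH^0(J_{u^\xi_y})=0$ for every $\xi$ and $\calH^1$-a.e.\ $y$, so all one-dimensional slices of $u$ are locally Sobolev with $p$-integrable derivative, whence $u\in W^{1,1}_\loc(U;\R^2)$ by the slicing characterization of $GSBD$, and then $e(u)\in L^p$ together with Korn's inequality gives $u\in W^{1,p}_\loc(U;\R^2)$. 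On such a $U$ every $v\in W^{1,p}$ with $\{v\ne u\}\subset\hskip-0.125cm\subset U$ is an admissible competitor in \eqref{e:Glocmin} carrying no jump energy, so $u$ locally minimizes $v\mapsto\int_U\big(f_\parametermu(e(v))+\cost|v-g|^p\big)dx$; since the hypotheses on $g$ are precisely those required by Theorem~\ref{t:regularity}, that result yields $u\in C^{1,\alpha}_\loc(U;\R^2)$. Applying this with $U=\Omega_u$ — open and satisfying $\calH^1(J_u\cap\Omega_u)=0$ by Corollary~\ref{lemmaomegauopen} — gives $u\in C^{1,\alpha}_\loc(\Omega_u;\R^2)$, and in particular $S_u\cap\Omega_u=\emptyset$.

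\emph{Set identities.} Corollary~\ref{lemmaomegauopen} gives $\Omega\cap\overline{J_u^\ast}\subseteq\Omega\setminus\Omega_u$. For the converse, given $x\in\Omega\setminus\overline{J_u^\ast}$ I would pick $r>0$ with $B_r(x)\subset\hskip-0.125cm\subset\Omega$ and $B_r(x)\cap J_u^\ast=\emptyset$; since $\calH^1(J_u\setminus J_u^\ast)=0$ by the density-one property of the $1$-rectifiable set $J_u$, this forces $\calH^1(J_u\cap B_r(x))=0$, hence $u\in C^{1,\alpha}_\loc(B_r(x);\R^2)$ by the previous step, so $e(u)$ and $u$ are bounded near $x$ and $G_0(u,\cost,\beta,B_\rho(x))\le c\,\rho^2<\vartheta_0\rho$ for all sufficiently small $\rho<R_0\wedge\dist(x,\partial\Omega)$; thus $x\in\Omega_u$. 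Hence $\Omega\setminus\Omega_u=\Omega\cap\overline{J_u^\ast}$. Since $\Omega_u$ is open and $S_u\cap\Omega_u=\emptyset$, while $J_u^\ast\subseteq J_u\subseteq S_u$, I obtain the chain $\Omega\cap\overline{S_u}\subseteq\Omega\setminus\Omega_u=\Omega\cap\overline{J_u^\ast}\subseteq\Omega\cap\overline{J_u}\subseteq\Omega\cap\overline{S_u}$, so all four sets coincide, which is the first assertion.

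\emph{Essential closedness and $C^1$ regularity.} Set $\mu:=\calH^1\res J_u$, a finite Radon measure on $\Omega$ concentrated on $J_u$, so $\mu\big((\overline{J_u}\setminus J_u)\cap\Omega\big)=0$. For $x\in(\overline{J_u}\setminus J_u)\cap\Omega=(\overline{J_u^\ast}\setminus J_u)\cap\Omega\subseteq\Omega\cap\overline{J_u^\ast}$, Corollary~\ref{c:dlbjump} gives $\liminf_{\rho\to0}\mu(B_\rho(x))/(2\rho)\ge\vartheta_1/2>0$, so the upper $1$-density of $\mu$ is bounded below by a positive constant there. By the standard comparison between a Radon measure with positive upper density and Hausdorff measure (there is $c>0$ with $\calH^1(B)\le c\,\mu(B)$ for every Borel set $B$ on which $\theta^{\ast1}(\mu,\cdot)\ge\vartheta_1/2$), a Borel set of zero $\mu$-measure with positive upper $1$-density must be $\calH^1$-null; hence $\calH^1\big((\overline{J_u}\setminus J_u)\cap\Omega\big)=0$, i.e.\ \eqref{e:saltochiuso}. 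Finally $\Omega\setminus\overline{J_u}=\Omega_u$ by the set identity, so $u\in C^1(\Omega\setminus\overline{J_u};\R^2)$ by the first step.
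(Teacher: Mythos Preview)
Your proof is correct and follows essentially the same route as the paper's: slicing plus Korn to obtain $u\in W^{1,p}_\loc$ on the jump-free set, Theorem~\ref{t:regularity} for $C^1$ regularity there, the resulting set identities, and the density lower bound of Corollary~\ref{c:dlbjump} combined with the standard upper-density comparison (the paper cites \cite[Theorem~2.56]{AmbrosioFuscoPallara}) for \eqref{e:saltochiuso}. The only cosmetic difference is that you route the set identities through $\Omega\cap\overline{J_u^\ast}$ and then chain the inclusions $\overline{J_u^\ast}\subseteq\overline{J_u}\subseteq\overline{S_u}$, whereas the paper argues the reverse inclusion directly from $x\in\Omega\setminus\overline{J_u}$; both are equivalent since $\calH^1(J_u\setminus J_u^\ast)=0$.
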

\begin{proof}
Since $GSBD^p$ is defined via slices {and $\Omega_u$ is open}, 
from $\calH^{1}(\Omega_u\cap J_u)=0$ we deduce  
$u\in W^{1,p}_\loc(\Omega_u;\R^2)$.
Thus, by elliptic regularity of Theorem~\ref{t:regularity}  we obtain 
$u\in C^{1}(\Omega_u;\R^2)$.
Hence, {$S_u\subseteq\Omega\setminus\Omega_u$ and actually 
$\Omega\cap\overline{S_u}\subseteq\Omega\setminus\Omega_u$, 
as $\Omega_u$ is open.}

On the other hand, if $x\in \Omega\setminus\overline{J_u}$, then $u\in W^{1,p}(B_\rho(x);\R^2)$ for some 
$\rho>0$, as $GSBD^p$ is defined via slices and again by elliptic regularity $u\in C^{1}(B_\rho(x);\R^2)$. 
Thus, $x\in\Omega_u$, and {since $J_u\subseteq S_u$} we conclude $\Omega\setminus\Omega_u={\Omega\cap\overline{S_u}=\Omega\cap\overline{J_u}}$.

Eventually, \eqref{e:saltochiuso} is a straightforward consequence of \eqref{e:dlbjump} and 
\cite[Theorem 2.56]{AmbrosioFuscoPallara}. 
\end{proof}
}

\subsection{Proof of the main results}\label{mainproof}

{We are finally ready to establish existence of strong minimizers for the Griffith 
static fracture model.
For simplicity of notation we write the functional $G$ appearing in \eqref{e:G} as $G(\cdot)=G(\cdot,\cost,\beta,\Omega)$.}
\begin{proof}[Proof of Theorem \ref{theootherp}]

By the compactness and lower semicontinuity result \cite[Theorem 11.3]{gbd}, $G$ has a minimizer $u$ in $GSBD(\Omega)$.
{By Theorem \ref{t:theogsbdpmin} 
we obtain  $u\in C^1(\Omega\setminus\overline{J_u};{\R^2})$ {so that}
 $E_p(\overline{J_u},u)=G(u)$, $E_p$ being defined in \eqref{eqgriffintrop}. }
Now, if $\Gamma\subset\overline\Omega$ is closed and $v\in W^{1,p}_\loc(\Omega\setminus \Gamma;{\R^2})$ with $E_p(\Gamma,v)<\infty$, then $v\in GSBD(\Omega)$ with ${\calH^1}(J_v\setminus \Gamma)=0$, again arguing by slicing. We conclude that
$$E_p(\overline{J_u},u)=G(u)\leq G(v)\leq E_p(\Gamma,v).$$
\end{proof}
The proof of Theorem \ref{theop2} is analogous.

\section*{Acknowledgments} 
This work was partially supported 
by the Deutsche Forschungsgemeinschaft through the Sonderforschungsbereich 1060 
{\sl ``The mathematics of emergent effects''}.
S.~Conti thanks the University of Florence for
the warm hospitality of the DiMaI ``Ulisse Dini'', where part of this work was 
carried out. 
M.~Focardi and F.~Iurlano are members of the Gruppo Nazionale per
l'Analisi Matematica, la Probabilit\`a e le loro Applicazioni (GNAMPA)
of the Istituto Nazionale di Alta Matematica (INdAM).


\end{document}